\newsavebox{\overlongequation}
\definecolor{codegreen}{rgb}{0,0.6,0}
\definecolor{codegray}{rgb}{0.5,0.5,0.5}
\definecolor{codepurple}{rgb}{0.58,0,0.82}
\definecolor{backcolour}{rgb}{0.95,0.95,0.92}
\lstdefinestyle{mystyle}{
    backgroundcolor=\color{backcolour},   
    commentstyle=\color{codegreen},
    keywordstyle=\color{magenta},
    numberstyle=\tiny\color{codegray},
    stringstyle=\color{codepurple},
    basicstyle=\ttfamily\footnotesize,
    breakatwhitespace=false,         
    breaklines=true,                 
    captionpos=b,                    
    keepspaces=true,                 
    numbers=left,                    
    numbersep=5pt,                  
    showspaces=false,                
    showstringspaces=false,
    showtabs=false,                  
    tabsize=2
}
\numberwithin{equation}{section}
\newtheorem{theorem}{Theorem}[section]
\newtheorem*{theorem*}{Theorem}
\newtheorem{lemma}[theorem]{Lemma}
\newtheorem{proposition}[theorem]{Proposition}
\newtheorem{corollary}[theorem]{Corollary}
\theoremstyle{definition}
\newtheorem{definition}[theorem]{Definition}
\newtheorem{example}[theorem]{Example}
\newtheorem{remark}[theorem]{Remark}
\def\CC{{\mathbb C}}
\newcommand\reallywidehat[1]{%
\savestack{\tmpbox}{\stretchto{%
  \scaleto{%
    \scalerel*[\widthof{\ensuremath{#1}}]{\kern-.6pt\bigwedge\kern-.6pt}%
    {\rule[-\textheight/2]{1ex}{\textheight}}%WIDTH-LIMITED BIG WEDGE
  }{\textheight}% 
}{0.5ex}}%
\stackon[1pt]{#1}{\tmpbox}%
}
\DeclareMathOperator{\rk}{rank}
\DeclareMathOperator{\Span}{span}
\DeclareMathOperator{\ch}{ch}
\DeclareMathOperator{\codim}{codim}
\DeclareMathOperator{\Hom}{Hom}
\newcommand{\QED}{\ifhmode\unskip\nobreak\fi\quad {\rm Q.E.D.}} % QED
\newcommand{\C}{\mathbb{C}}
\renewcommand{\P}{\mathbb{P}}
\newcommand{\RR}{\mathbb{R}}
\newcommand{\cS}{\mathcal{S}}
\newcommand{\x}{\mathbf{x}}
\newcommand{\y}{\mathbf{y}}
\newcommand{\bR}{\mathbf{R}}
\definecolor{darkgreen}{rgb}{0,0.5,0}
\definecolor{byzantium}{rgb}{0.44, 0.16, 0.39}
\definecolor{brilliantrose}{rgb}{1.0, 0.33, 0.64}
\definecolor{brinkpink}{rgb}{0.98, 0.38, 0.5}
\definecolor{ao(english)}{rgb}{0.0, 0.5, 0.0}	\definecolor{britishracinggreen}{rgb}{0.0, 0.26, 0.15}
\definecolor{chartreuse(web)}{rgb}{0.5, 1.0, 0.0}
\definecolor{electricgreen}{rgb}{0.0, 1.0, 0.0}
\definecolor{electriclime}{rgb}{0.8, 1.0, 0.0}
\definecolor{emerald}{rgb}{0.31, 0.78, 0.47}
\newcommand{\AS}[1]{\textcolor{red}{#1}}
\title{Multilinear Hyperquiver Representations}
\author{Tommi Muller}\address{Mathematical Institute, University of Oxford, OX2 6GG, UK}\email{tommi.muller@maths.ox.ac.uk} 
\author{Vidit Nanda}\address{Mathematical Institute, University of Oxford, OX2 6GG, UK}\email{nanda@maths.ox.ac.uk}
\author{Anna Seigal}\address{Harvard University, School Of Engineering And Applied Sciences, 29 Oxford Street, Cambridge, MA 02138
USA}\email{aseigal@seas.harvard.edu}
\begin{document}
\captionsetup[subfloat]{labelfont=normalfont}
\begin{abstract}

We count singular vector tuples of a system of tensors assigned to the edges of a directed hypergraph. To do so, we study the generalisation of quivers to directed hypergraphs. Assigning vector spaces to the nodes of a hypergraph and multilinear maps to its hyperedges gives a hyperquiver representation. 
Hyperquiver representations generalise quiver representations (where all hyperedges are edges) and tensors (where there is only one multilinear map).
The singular vectors of a hyperquiver representation are a compatible assignment of vectors to the nodes. 
We compute the dimension and degree of the variety of singular vectors of a sufficiently generic hyperquiver representation. Our formula
specialises to 
known results that count the
singular vectors and eigenvectors of a generic tensor.
Lastly, we study a hypergraph generalisation of the inverse tensor eigenvalue problem and solve it algorithmically.

\bigskip

\noindent {\bf MSC Classification:} 14D21, 14C17, 15A69, 15A18, 16G20, 05C65.

\bigskip

\noindent {\bf Keywords:} tensors, singular vectors, eigenvectors, quiver representations, Chern classes, inverse eigenvalue problems.

\bigskip
\bigskip
\medskip
\medskip

\begin{center}
\textit{Communicated by JM Landsberg}
\end{center}

\vspace{-2em}

\end{abstract}

\clearpage\maketitle
\thispagestyle{empty}

\section{Introduction}

The theory of quiver representations provides a unifying framework for some fundamental concepts in linear algebra \cite{bernstein1973coxeter}. In this paper, we introduce and study a natural generalisation of quiver representations, designed to analogously serve the needs of multilinear algebra.

\bigskip

{\bf Quiver Representations and Matrix Spectra.} A {\em quiver} $Q$ consists of finite sets $V$ and $E$, whose elements are called vertices and edges respectively, along with two functions $s,t:E \to V$ sending each edge to its source and target vertex. It is customary to write $e:i \to j$ for the edge $e$ with $s(e) = i$ and $t(e) = j$. The definition does not prohibit self-loops $s(e) = t(e)$  nor parallel edges $e_1,e_2:i \to j$. A {\em representation} $(U,\alpha)$ of $Q$ assigns a finite-dimensional vector space $U_i$ to each $i \in V$ and a linear map $\alpha_e:U_i \to U_j$ to each $e:i \to j$ in $E$. Originally introduced by Gabriel to study finite-dimensional algebras~\cite{gabriel}, quiver representations have since become ubiquitous in mathematics. They appear prominently in disparate fields ranging from representation theory and algebraic geometry~\cite{derksen_introduction_2017} to topological data analysis \cite{oudot_persistence_2015}. In most of these appearances, the crucial task is to classify the representations of a given quiver up to isomorphism. This amounts in practice to cataloguing the {\em indecomposable} representations; i.e., those that cannot be expressed as direct sums of smaller nontrivial representations. 

For all but a handful of quivers, the set of indecomposables (up to isomorphism) is complicated, and such a classification is hopeless~\cite{gabriel}.
Nevertheless, one may follow the spirit of~\cite{seigal2022principal} and use quivers to encode compatibility constraints with spectral interpretations. We work with representations that assign vector spaces $U_i = \mathbb{C}^{d_i}$ to each vertex and matrices $A_e: \mathbb{C}^{d_i} \rightarrow \mathbb{C}^{d_j}$ to each edge.
We denote the quiver representation by $(\mathbf{d},A)$, where 
$\mathbf{d} := (d_1,\ldots,d_n)$ is the dimension vector. Let $[\x] \in \P(\mathbb{C}^d)$ denote the projectivisation of a non-zero $\x \in \mathbb{C}^d$. We define the \emph{singular vectors} of a quiver representation $(\mathbf{d},A)$ to consist of tuples $\left( [\x_i] \in \P(\mathbb{C}^{d_i}) \mid i \in V \right)$ for which there exist scalars $\left( \lambda_e \mid e \in E \right)$ so that the compatibility constraint $A_e\x_i = \lambda_e \x_j$ holds across each edge $e:i \to j$. Standard notions from linear algebra arise as special cases of such singular vectors, see also Figure \ref{fig:quiver-examples}:
\begin{enumerate}
    \item[(a)] The eigenvectors of a matrix $A :\C^d \to \C^d$ are the singular vectors of the representation of the Jordan quiver that assigns $\C^d$ to the unique node and $A$ to the unique edge. 
    \item[(b)] The singular vectors of a matrix $A:\C^{d_1} \to \C^{d_2}$ arise from the representation of the directed cycle of length $2$, with $A$ assigned to one edge and $A^\top$ assigned to the other. 
\item[(c)] The generalised eigenvectors of a pair of matrices $A, B: \C^d \to \C^d$ -- i.e., non-zero solutions $\x$ to $A \x = \lambda \cdot B \x$ for some $\lambda \in \C$ -- are the singular vectors of the representation of the Kronecker quiver  with $A$ on one edge and $B$ on the other.
\end{enumerate}
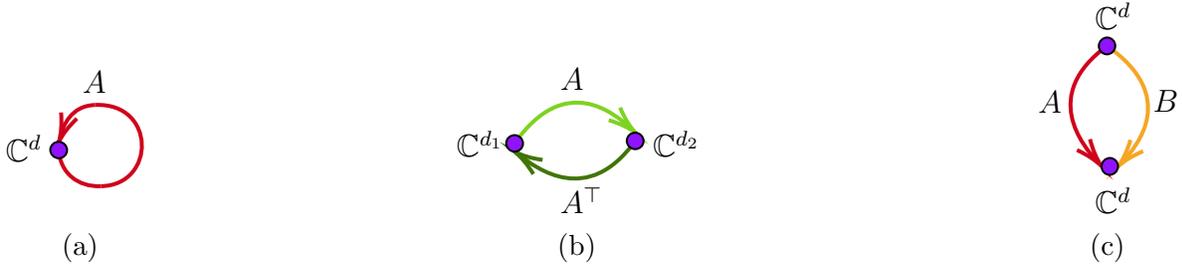
\begin{figure}[htbp]
     \centering
     \begin{subfigure}[b]{0.145\textwidth}
         \centering
\tikzset{every picture/.style={line width=0.75pt}} %set default line width to 0.75pt        

\begin{tikzpicture}[x=0.75pt,y=0.75pt,yscale=-1,xscale=1]
%uncomment if require: \path (0,703); %set diagram left start at 0, and has height of 703

%Curve Lines [id:da4366984989203675] 
\draw [color={rgb, 255:red, 208; green, 2; blue, 27 }  ,draw opacity=1 ][line width=1.5]    (143.5,452.3) .. controls (131.33,452.69) and (127.48,462.79) .. (126.05,468) ;
\draw [shift={(125.29,470.89)}, rotate = 291.05] [color={rgb, 255:red, 208; green, 2; blue, 27 }  ,draw opacity=1 ][line width=1.5]    (14.21,-4.28) .. controls (9.04,-1.82) and (4.3,-0.39) .. (0,0) .. controls (4.3,0.39) and (9.04,1.82) .. (14.21,4.28)   ;
%Curve Lines [id:da08803716237580916] 
\draw [color={rgb, 255:red, 208; green, 2; blue, 27 }  ,draw opacity=1 ][line width=1.5]    (146.5,493.3) .. controls (174.5,492.8) and (174.5,450.8) .. (143.5,452.3) ;
%Curve Lines [id:da5880906750728332] 
\draw [color={rgb, 255:red, 208; green, 2; blue, 27 }  ,draw opacity=1 ][line width=1.5]    (125.19,475.04) .. controls (126,486.8) and (135,493.8) .. (146.5,493.3) ;
%Shape: Ellipse [id:dp34813369167005015] 
\draw  [fill={rgb, 255:red, 144; green, 19; blue, 254 }  ,fill opacity=1 ] (125.29,470.89) .. controls (127.63,470.97) and (129.49,472.9) .. (129.43,475.19) .. controls (129.38,477.47) and (127.43,479.26) .. (125.09,479.18) .. controls (122.75,479.1) and (120.9,477.18) .. (120.95,474.89) .. controls (121.01,472.6) and (122.95,470.81) .. (125.29,470.89) -- cycle ;

% Text Node
\draw (135.8,433.55) node [anchor=north west][inner sep=0.75pt]    {$A$};
% Text Node
\draw (97,466) node [anchor=north west][inner sep=0.75pt]    {$\mathbb{C}^d$};
% Text Node
\draw (134.8,494.55) node [anchor=north west][inner sep=0.75pt]    {$\phantom{A}$};
\end{tikzpicture}
         \caption{}
         \label{fig:Jordan-quiver}
     \end{subfigure}
     \hfill
     \begin{subfigure}[b]{0.2\textwidth}
         \centering
\tikzset{every picture/.style={line width=0.75pt}} %set default line width to 0.75pt        

\begin{tikzpicture}[x=0.75pt,y=0.75pt,yscale=-1,xscale=1]
%uncomment if require: \path (0,703); %set diagram left start at 0, and has height of 703

%Curve Lines [id:da9264639767832648] 
\draw [color={rgb, 255:red, 126; green, 211; blue, 33 }  ,draw opacity=1 ][line width=1.5]    (247.69,471.04) .. controls (273.17,434.82) and (298.07,455.79) .. (306.44,463.62) ;
\draw [shift={(308.6,465.65)}, rotate = 221.99] [color={rgb, 255:red, 126; green, 211; blue, 33 }  ,draw opacity=1 ][line width=1.5]    (14.21,-4.28) .. controls (9.04,-1.82) and (4.3,-0.39) .. (0,0) .. controls (4.3,0.39) and (9.04,1.82) .. (14.21,4.28)   ;
%Curve Lines [id:da3329449032648357] 
\draw [color={rgb, 255:red, 65; green, 117; blue, 5 }  ,draw opacity=1 ][line width=1.5]    (308.5,469.8) .. controls (284.52,503.28) and (258.9,483.74) .. (249.87,476.85) ;
\draw [shift={(247.59,475.18)}, rotate = 33.2] [color={rgb, 255:red, 65; green, 117; blue, 5 }  ,draw opacity=1 ][line width=1.5]    (14.21,-4.28) .. controls (9.04,-1.82) and (4.3,-0.39) .. (0,0) .. controls (4.3,0.39) and (9.04,1.82) .. (14.21,4.28)   ;
%Shape: Ellipse [id:dp8507435298843151] 
\draw  [fill={rgb, 255:red, 144; green, 19; blue, 254 }  ,fill opacity=1 ] (247.79,466.89) .. controls (250.13,466.97) and (251.99,468.9) .. (251.93,471.19) .. controls (251.88,473.47) and (249.93,475.26) .. (247.59,475.18) .. controls (245.25,475.1) and (243.4,473.18) .. (243.45,470.89) .. controls (243.51,468.6) and (245.45,466.81) .. (247.79,466.89) -- cycle ;
%Shape: Ellipse [id:dp8465359792025124] 
\draw  [fill={rgb, 255:red, 144; green, 19; blue, 254 }  ,fill opacity=1 ] (308.6,465.65) .. controls (310.94,465.73) and (312.79,467.66) .. (312.74,469.95) .. controls (312.69,472.23) and (310.74,474.02) .. (308.4,473.94) .. controls (306.06,473.86) and (304.21,471.94) .. (304.26,469.65) .. controls (304.31,467.36) and (306.26,465.57) .. (308.6,465.65) -- cycle ;

% Text Node
\draw (269.3,431.55) node [anchor=north west][inner sep=0.75pt]    {$A$};
% Text Node
\draw (269.3,491.55) node [anchor=north west][inner sep=0.75pt]    {$A^{\top }$};
% Text Node
\draw (217,463) node [anchor=north west][inner sep=0.75pt]    {$\mathbb{C}^{d_1}$};
% Text Node
\draw (316,463) node [anchor=north west][inner sep=0.75pt]    {$\mathbb{C}^{d_2}$};

\end{tikzpicture}
         \caption{}
         \label{fig:Loop-quiver}
     \end{subfigure}
     \hfill
     \begin{subfigure}[b]{0.2\textwidth}
         \centering
\tikzset{every picture/.style={line width=0.75pt}} %set default line width to 0.75pt        

\begin{tikzpicture}[x=0.75pt,y=0.75pt,yscale=-1,xscale=1]
%uncomment if require: \path (0,703); %set diagram left start at 0, and has height of 703

%Curve Lines [id:da9500477532969942] 
\draw [color={rgb, 255:red, 245; green, 166; blue, 35 }  ,draw opacity=1 ][line width=1.5]    (484.49,435.19) .. controls (520.76,460.58) and (499.85,485.53) .. (492.04,493.92) ;
\draw [shift={(490.01,496.08)}, rotate = 311.86] [color={rgb, 255:red, 245; green, 166; blue, 35 }  ,draw opacity=1 ][line width=1.5]    (14.21,-4.28) .. controls (9.04,-1.82) and (4.3,-0.39) .. (0,0) .. controls (4.3,0.39) and (9.04,1.82) .. (14.21,4.28)   ;
%Curve Lines [id:da8734045381682165] 
\draw [color={rgb, 255:red, 208; green, 2; blue, 27 }  ,draw opacity=1 ][line width=1.5]    (484.49,435.19) .. controls (450.68,459.98) and (472.31,485.68) .. (479.75,493.75) ;
\draw [shift={(481.72,495.91)}, rotate = 230.14] [color={rgb, 255:red, 208; green, 2; blue, 27 }  ,draw opacity=1 ][line width=1.5]    (14.21,-4.28) .. controls (9.04,-1.82) and (4.3,-0.39) .. (0,0) .. controls (4.3,0.39) and (9.04,1.82) .. (14.21,4.28)   ;
%Shape: Ellipse [id:dp027099872922673462] 
\draw  [fill={rgb, 255:red, 144; green, 19; blue, 254 }  ,fill opacity=1 ] (490.01,496.08) .. controls (489.94,498.43) and (488.02,500.28) .. (485.73,500.24) .. controls (483.44,500.19) and (481.64,498.25) .. (481.72,495.91) .. controls (481.8,493.56) and (483.72,491.71) .. (486.01,491.75) .. controls (488.3,491.8) and (490.09,493.74) .. (490.01,496.08) -- cycle ;
%Shape: Ellipse [id:dp07305952915519742] 
\draw  [fill={rgb, 255:red, 144; green, 19; blue, 254 }  ,fill opacity=1 ] (488.63,435.28) .. controls (488.56,437.62) and (486.64,439.48) .. (484.35,439.43) .. controls (482.06,439.38) and (480.27,437.44) .. (480.34,435.1) .. controls (480.42,432.76) and (482.34,430.9) .. (484.63,430.95) .. controls (486.92,431) and (488.71,432.94) .. (488.63,435.28) -- cycle ;

% Text Node
\draw (448.3,456.55) node [anchor=north west][inner sep=0.75pt]    {$A$};
% Text Node
\draw (506.3,456.05) node [anchor=north west][inner sep=0.75pt]    {$B$};
% Text Node
\draw (477,412) node [anchor=north west][inner sep=0.75pt]    {$\mathbb{C}^d$};
% Text Node
\draw (477,505) node [anchor=north west][inner sep=0.75pt]    {$\mathbb{C}^d$};

\end{tikzpicture}

         \caption{}
         \label{fig:Kronecker-quiver}
     \end{subfigure}
        \caption{Quiver representations corresponding to (a) the eigenvectors of a matrix, (b) the singular vectors of a matrix, (c) the generalised eigenvectors of a pair of matrices.}
        \label{fig:quiver-examples}
\end{figure}
For $d = d_1 = d_2$, a generic instance of any of these three quiver representations has $d$ singular vectors.

\bigskip

{\bf Hyperquiver Representations and Tensors.} This century has witnessed progress towards extending the spectral theory of matrices to the multilinear setting of tensors \cite{qi2017tensor}. 
Given a tensor $T \in \CC^{d_1} \otimes \cdots \otimes \CC^{d_m}$, we write $T(\x_1, \ldots, \x_{j-1}\,, \cdot \,, \x_{j+1}, \ldots, \x_{m}) \in \CC^{d_j}$ for the vector with $i$-th coordinate
\begin{equation*}
\label{eqn:tensor_T}
\sum_{i_1 = 1}^{d_1} \ldots \sum_{i_{j-1} = 1}^{d_{j-1}} \sum_{i_{j+1} = 1}^{d_{j+1}} \ldots \sum_{i_{m} = 1}^{d_m} T_{i_1,\ldots,i_{j-1},i,i_{j+1},\ldots, i_m} {(\x_1)}_{i_1} \cdots {(\x_{j-1})}_{i_{j-1}} {(\x_{j+1})}_{i_{j+1}} \cdots {(\x_m)}_{i_m}.
\end{equation*}
Eigenvectors and singular vectors of tensors were introduced in~\cite{lim2005singular, qi2005eigenvalues}. The eigenvectors of $T \in  (\C^d)^{\otimes m}$ are all non-zero $\x \in \C^d$ satisfying 
\begin{equation*}
    \label{eqn:T_notation}
    T(\, \cdot \,, \x,\ldots,\x) = \lambda \cdot \x,
\end{equation*} 
for some scalar $\lambda \in \C$. In the special case of matrices, this reduces to the familiar formula $A \x = \lambda \x$. Similarly, the singular vectors of a tensor $T \in \CC^{d_1} \otimes \cdots \otimes \CC^{d_n}$ are the tuples of vectors $(\x_1, \ldots, \x_n) \in \CC^{d_1} \times \cdots \times \CC^{d_n}$, with each $\x_i \neq 0$, satisfying 
\begin{align*} 
T(\x_1, \ldots, \x_{j-1},\,\cdot\,,\x_{j+1},\ldots,\x_n) = \mu_j\x_j
\end{align*}
for all $j$. This specialises for matrices to the familiar pair of conditions $A \x_2 = \mu_1 \x_1$ and $A^\top \x_1 = \mu_2 \x_2$.

Eigenvectors and singular vectors have been computed for special classes of tensors in \cite{robeva2016orthogonal, robeva2017singular}; they have been used to study  hypergraphs~\cite{benson2019three, qi2017tensor} and to learn parameters in latent variable models~\cite{anandkumar2014tensor, auddy2020perturbation}. They have a variational interpretation as being the critical points of certain optimisation problems \cite{lim2005singular}. It is well known that the singular vectors of a tensor $T \in \RR^{d_1} \otimes \cdots \otimes \RR^{d_n}$ are both the critical points \cite{friedland-ottaviani} of the best rank-1 approximation
\begin{equation*}
    \min_{\x_i \in \RR^{d_i}}\min_{a \in \RR} \; \|T - a\cdot \x_1 \otimes \cdots \otimes \x_n\| \quad \text{s.t.} \quad \|\x_1\| = \cdots \|\x_n\| = 1
\end{equation*}
and the spectral norm functional
\begin{equation*}
    \max_{\x_i \in \RR^{d_i}} \; T(\x_1,\ldots,\x_n) \quad \text{s.t.} \quad \|\x_1\| = \cdots \|\x_n\| = 1
\end{equation*}
The eigenvectors of a symmetric tensor $T \in (\RR^d)^{\otimes m}$ are likewise the critical points of similar optimisation problems. For various definitions of the symmetric \textit{Laplacian tensor} and \textit{degree tensor} $A,B \in (\RR^d)^{\otimes m}$ of an $m$-regular hypergraph $G$, the \textit{eigenvalues} of $G$ are the critical values of the optimisation problem
\begin{equation}{\label{eq:genealised-tensor-functional}}
    \min_{\x \in \RR^{d}} \; \frac{A(\mathbf{x},\ldots,\mathbf{x})}{B(\mathbf{x},\ldots,\mathbf{x})} \quad \text{s.t.} \quad \|\x\| = 1
\end{equation}
These eigenvalues and eigenvectors give bounds for Cheeger-like inequalities for $G$ and can be used to perform hypergraph clustering \cite{hypergraph2,hypergraph1,hypergraph4,hypergraph3}.

In order to create the appropriate generalisation of quiver singular vectors to subsume these notions from the spectral theory of tensors, we generalise from quivers to {\em hyperquivers}. In general, hyperquivers are obtained from quivers  by allowing the source and target maps $s,t:E \to V$ to be multivalued. For our purposes, it suffices to consider hyperquivers where each edge $e \in E$ has a single target vertex. The hyperedge $e$ now has a tuple of sources $(s_1(e), s_2(e), \ldots, s_\mu(e)) \in V^\mu$ for some $e$-dependent integer $\mu$. A {\em representation} $\bR = (\mathbf{d},T)$ of such a hyperquiver assigns to each vertex $i$ a vector space $\mathbb{C}^{d_i}$ and to each edge $e$ a tensor
$$T_e \in \mathbb{C}^{d_{t(e)}} \otimes \mathbb{C}^{d_{s_1(e)}} \otimes \ldots \otimes \mathbb{C}^{d_{s_\mu(e)}}.$$
We identify a vector space $\C^{d}$ with its dual $(\C^{d})^*$, allowing us to view the tensor $T_e$ as a multilinear map
\begin{equation*}
\begin{aligned}{\label{eq:multilinear-map}}
T_e:\prod_{j=1}^\mu \mathbb{C}^{d_{s_j(e)}} &\to \mathbb{C}^{d_{t(e)}} \\
(\x_{s_1(e)},\ldots,\x_{s_\mu(e)}) &\mapsto T_e(\, \cdot \,, \x_{s_1(e)}, \ldots, \x_{s_\mu(e)}).
\end{aligned}    
\end{equation*}
Our hyperquiver representations reduce to usual quiver representations when each edge has $\mu = 1$. 

A {\em singular vector} of a representation $\bR$ is a tuple $\left( [\x_i] \in \P(\mathbb{C}^{d_i}) \mid i \in V \right)$ that satisfies
\begin{equation}{\label{eq:singular-vector-hyperquiver}}
    T_e(\, \cdot \,,\x_{i_1},\ldots,\x_{i_\mu}) = \lambda_e \cdot \x_j,
\end{equation}
for some scalar $\lambda_e$, across every edge $e \in E$ of the form $(i_1,\ldots,i_\mu) \to j$. We work with vectors in a product of projective spaces, since we require the vectors to be non-zero (as for the singular vectors of a matrix) and moreover because the equation \eqref{eq:singular-vector-hyperquiver} still holds after non-zero rescaling of each $\x_i$, albeit for different scalars $\lambda_e$.

Perhaps the simplest nontrivial family of examples is furnished by starting with the quiver with a single vertex and a single hyperedge with $m-1$ source vertices --- we call this the $m$-Jordan hyperquiver. Consider the representation that assigns, to the vertex, the vector space $\C^d$ for some dimension $d \geq 0$, and to the edge, a tensor $T \in (\CC^d)^{\otimes m}$, seen as a multilinear map $T : (\CC^d)^{(m-1)} \to \CC^d$ that contracts vectors along the last $\mu = m-1$ modes of $T$; see Figure \ref{fig:loop-hyperquiver-ex} for the case $m = 3$.
The singular vectors of this representation are all $[\x] \in \P(\C^d)$ satisfying $T(\, \cdot \,, \x, \x, \ldots, \x ) = \lambda \cdot \x$ for some scalar $\lambda \in \C$. The singular vectors of the representation are therefore the eigenvectors of the tensor $T$.
 
\input{figures/tensor-eigenvector}

The compatibility conditions that define singular vectors can be reframed in terms of the vanishing of minors of suitable $d_i \times 2$ matrices. For a sufficiently generic hyperquiver representation $\mathbf{R}$, we will see that its singular vectors form a smooth, multiprojective variety in $\prod_{i \in V} \P(\mathbb{C}^{d_i})$ which we call the {\em singular vector variety} and denote by $\cS(\bR)$. This variety simultaneously forms a multilinear (and projective) generalisation of the linear {\em space of sections} of a quiver representation from \cite{seigal2022principal}, and a multi-tensor generalisation of the set of singular vectors of a single tensor from \cite{friedland-ottaviani}. 
The property that a point lies in $\mathcal{S}(\mathbf{R})$ is equivariant under an orthogonal change of basis on each vector space, as is true for the singular vectors of a matrix, as follows. Let $([x_1],\ldots,[x_n]) \in \prod_{i \in V} \P(\mathbb{C}^{d_i})$ be a singular vector tuple of a hyperquiver representation with tensors $T_e \in \mathbb{C}^{d_{t(e)}} \otimes \mathbb{C}^{d_{s_1(e)}} \otimes \ldots \otimes \mathbb{C}^{d_{s_\mu(e)}}$ and let 
$Q_1, \ldots, Q_n$ be a tuple of complex orthogonal matrices; i.e., $Q_i^\top Q_i = I_{d_i}$.
Then $([Q_1x_1],\ldots,[Q_nx_n])$ is a singular vector tuple of the hyperquiver representation where each $T_e$ has its source components multiplied by $Q_{s_j(e)}^\top$ and target component multiplied by $Q_{t(e)}$. We expect the topology of this variety, particularly its (co)homology groups, to provide rich and interesting isomorphism invariants for hyperquiver representations.

\bigskip

{\bf Main Result.} We derive exact and explicit formulas for the {\em dimension} and {\em degree} of $\cS(\bR)$ when $\bR$ is a sufficiently generic representation of a given hyperquiver. Here is a simplified version, in the special case when all vector spaces are of the same dimension.

\begin{theorem}{\label{thm:main_specialcase}}
Let $\bR = (\mathbf{d},T)$ be a generic representation of a hyperquiver $H = (V,E)$ with $\mathbf{d} = (d,d,\ldots,d)$. Let $N = (d-1) (|V| - |E|)$. If $N < 0$, then $\cS(\bR) = \varnothing$. If $N \geq 0$, let $D$ be the coefficient of $\left( \prod_{i \in V} h_i \right)^{d-1}$ in the polynomial
    $$\left(\sum_{i \in V} h_i\right)^N \cdot \prod_{e \in E} \left( \sum_{k = 1}^{d} h_{t(e)}^{k-1} \cdot h_{s(e)}^{d-k} \right), \quad \text{where} \quad h_{s(e)} := \sum_{j=1}^{\mu(e)} h_{s_j(e)}.$$
Then $\cS(\bR) = \varnothing$ if and only if $D = 0$. Otherwise, $\cS(\bR)$ has dimension $N$ and degree $D$. Moreover, if $\dim \cS(\bR) = 0$, then each singular vector tuple occurs with multiplicity $1$.
\end{theorem}

\begin{example}
Let $\mathbf{R}$ be the hyperquiver representation in Figure \ref{fig:labelled-edges}, with $T \in \C^3 \otimes \C^3 \otimes \C^3$ a generic tensor. We have $N = (3-1)(2-2) = 0$. We seek the coefficient $D$ of the monomial $h_1^2h_2^2$ in the polynomial
$$\left((h_1+h_2)^2 + h_1(h_1+h_2) +  h_1^2\right)^2 = 9h_1^4 + 18h_1^3h_2 + 15 h_1^2h_2^2 + 6h_1h_2^3 + h_2^4.$$
We see that $D = 15$. Hence the singular vector variety $\mathcal{S}(\mathbf{R})$ has dimension $N = 0$ and consists of 15 singular vector tuples, each occurring with multiplicity 1.
% \vspace{-1em}
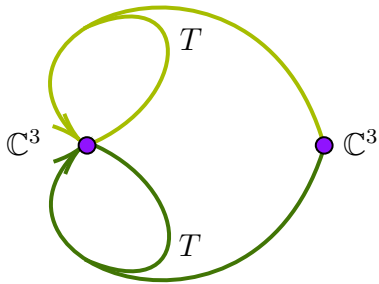
\begin{figure}[htbp]
    \centering
\tikzset{every picture/.style={line width=0.75pt}} %set default line width to 0.75pt        

\begin{tikzpicture}[x=0.75pt,y=0.75pt,yscale=-1,xscale=1]
% uncomment if require: \path (0,703); %set diagram left start at 0, and has height of 703

%Curve Lines [id:da0805496089792408] 
\draw [color={rgb, 255:red, 166; green, 189; blue, 0 }  ,draw opacity=1 ][line width=1.5]    (489.89,485.51) .. controls (456.01,367.64) and (307.67,422.82) .. (364.39,480.7) ;
\draw [shift={(366.17,482.45)}, rotate = 223.57] [color={rgb, 255:red, 166; green, 189; blue, 0 }  ,draw opacity=1 ][line width=1.5]    (14.21,-4.28) .. controls (9.04,-1.82) and (4.3,-0.39) .. (0,0) .. controls (4.3,0.39) and (9.04,1.82) .. (14.21,4.28)   ;
%Curve Lines [id:da6240080207771126] 
\draw [color={rgb, 255:red, 166; green, 189; blue, 0 }  ,draw opacity=1 ][line width=1.5]    (370.29,485.51) .. controls (424.17,463.45) and (425.67,400.95) .. (368.67,426.45) ;
%Curve Lines [id:da4433139694282544] 
\draw [color={rgb, 255:red, 65; green, 117; blue, 5 }  ,draw opacity=1 ][line width=1.5]    (490.64,483.89) .. controls (456.78,601.79) and (308.06,546.75) .. (364.85,488.82) ;
\draw [shift={(366.63,487.06)}, rotate = 136.45] [color={rgb, 255:red, 65; green, 117; blue, 5 }  ,draw opacity=1 ][line width=1.5]    (14.21,-4.28) .. controls (9.04,-1.82) and (4.3,-0.39) .. (0,0) .. controls (4.3,0.39) and (9.04,1.82) .. (14.21,4.28)   ;
%Curve Lines [id:da5550117557526817] 
\draw [color={rgb, 255:red, 65; green, 117; blue, 5 }  ,draw opacity=1 ][line width=1.5]    (370.76,484) .. controls (424.78,506.01) and (426.34,568.51) .. (369.19,543.06) ;

%Shape: Ellipse [id:dp7376512604506558] 
\draw  [fill={rgb, 255:red, 144; green, 19; blue, 254 }  ,fill opacity=1 ] (370.44,481.37) .. controls (372.78,481.48) and (374.61,483.43) .. (374.52,485.72) .. controls (374.44,488) and (372.48,489.77) .. (370.14,489.66) .. controls (367.79,489.55) and (365.97,487.6) .. (366.05,485.31) .. controls (366.13,483.02) and (368.1,481.26) .. (370.44,481.37) -- cycle ;
%Shape: Ellipse [id:dp9952008482188452] 
\draw  [fill={rgb, 255:red, 144; green, 19; blue, 254 }  ,fill opacity=1 ] (490.04,481.37) .. controls (492.38,481.48) and (494.21,483.43) .. (494.12,485.72) .. controls (494.04,488) and (492.08,489.77) .. (489.74,489.66) .. controls (487.39,489.55) and (485.57,487.6) .. (485.65,485.31) .. controls (485.73,483.02) and (487.7,481.26) .. (490.04,481.37) -- cycle ;

% Text Node
\draw (328,476) node [anchor=north west][inner sep=0.75pt]    {$\mathbb{C}^{3}$};
% Text Node
\draw (498,476) node [anchor=north west][inner sep=0.75pt]    {$\mathbb{C}^{3}$};
% Text Node
\draw (415.63,425) node [anchor=north west][inner sep=0.75pt]    {$T$};
% Text Node
\draw (415,529) node [anchor=north west][inner sep=0.75pt]    {$T$};

\end{tikzpicture}
    \vspace{-2em}
    \caption{The light-green hyperedge is the contraction $T(\, \cdot \,, \x, \y)$ and the dark-green hyperedge is the contraction $T(\x, \, \cdot \,, \y)$,
where $\x, \y \in \C^3$ are on the left and right vertices respectively. }
    \label{fig:labelled-edges}
\end{figure}
\end{example}

Our argument follows the work of Friedland and Ottaviani from \cite{friedland-ottaviani} --- we first construct a vector bundle whose generic global sections have the singular vectors of $\bR$ as their zero set, and then apply a variant of Bertini's theorem to count singular vectors by computing the top Chern class of the bundle. The authors of \cite{friedland-ottaviani} compute the number of singular vectors of a single generic tensor --- this corresponds to counting the singular vectors of the hyperquiver representation depicted in Figure \ref{fig:hyperquiver-examples}(b). Here we derive general formulas to describe the algebraic variety of singular vectors for an arbitrary network of (sufficiently generic) tensors.

\bigskip

{\bf Related Work.} Special cases of our degree formula, all in the case $\dim \cS(\bR) = 0$, recover existing results from the literature --- see \cite{cartwright2013number} and~\cite[Corollary 3.2]{fornaess1995complex} for eigenvector counts, \cite{friedland-ottaviani} for singular vector counts, and \cite{tensor-eigenvalue-problem, friedland-ottaviani} for generalised eigenvector counts. In addition, \cite{Pan16,OSV21} study the asymptotic and stabilisation behaviour of singular vector counts. Other recent work that builds upon the approach in \cite{friedland-ottaviani} includes \cite{draisma2018best, sodomaco2022span} which study the span of the singular vector tuples, \cite{turatti2022tensors} which studies tensors determined by their singular vectors, and the current work~\cite{Abo_Portakal_Sodomaco_2023} which uses a related setup to count totally mixed Nash equilibria. The eigenscheme of a matrix \cite{ABO2016121} and ternary tensor \cite{eigenschemes} is a scheme-theoretic version of $\mathcal{S}(\mathbf{R})$ for the Jordan quiver in Figure \ref{fig:Jordan-quiver} and the hyperquiver in Figure \ref{fig:loop-hyperquiver-ex}.

\bigskip

{\bf Outline.} The rest of this paper is organised as follows. In Section \ref{sec:svv} we introduce hyperquiver representations and their singular vector varieties. We state our main result, Theorem \ref{thm:main}, in Section \ref{sec:consequences} and describe a few of its applications. The construction of the vector bundle corresponding to a hyperquiver representation is given in Section \ref{sec:bundles}, and our Bertini-type theorem -- which we hope will be of independent interest -- is proved in Section \ref{sec:bertini}. We show that for generic $\bR$ the hypotheses of the Bertini theorem are satisfied by the associated vector bundle in Section \ref{sec:almost_generated}, and compute its top Chern class in Section \ref{sec:top_chern}. In Section \ref{sec:inverse-problems}, we consider an application of hyperquiver representations in multilinear algebra, where we address the {\em inverse eigenvalue problem} of finding representations which admit a given collection of vectors as their singular vectors. For completeness, we have collected relevant results from intersection theory in Appendix \ref{sec:intersection_theory}.

\section{The singular vector variety}\label{sec:svv}

We establish notation for hyperquiver representations, define their singular vector varieties, and highlight the genericity condition which plays a key role in the sequel. Without loss of generality, we henceforth assume $V = [n]$, where $[n]:= \{1,\ldots,n\}$ for $n \in \mathbb{N}$. 

\begin{definition}\label{defn:hyperquiver}
A \textit{hyperquiver} $H = (V,E)$ consists of a finite set of \textit{vertices} $V$ of size $|V| = n$ and a finite set of \textit{hyperedges} $E$. For each hyperedge $e \in E$ we have
\begin{enumerate} [label=(\roman*)]
	\item a non-negative integer $\mu(e)$ called the {\em index} of $e$ 
	\item  a tuple of vertices $v(e) \in V^{\mu(e)+1}$ called the \textit{vertices} of $e$.
\end{enumerate}
For brevity, we may refer to a hyperedge as an edge and write $\mu$ as a shorthand for $\mu(e)$.
The $j$-th entry of tuple $v(e)$ is denoted $s_j(e) \in V$. The tuple $s(e) := (s_0(e),s_1(e),\ldots,s_{\mu}(e))$ are the {\em sources} of $e$, and the vertex $t(e) := s_0(e)$ is the {\em target} of $e$.
\end{definition}

%% What is m here? Change m to \mu(e)

\begin{remark}
Usual quivers are the special case with $\mu = 1$ for all $e \in E$. Definition~\ref{defn:hyperquiver} does not exclude entries of $s(e)$ being equal to $t(e)$, nor does it exclude multiple hyperedges with the same tuple $v(e)$. 
% We assume the vertex set $V$ has a natural total order, $V = \{1, \ldots, n\}$.
\end{remark}

We now define representations of hyperquivers. 
The definition works for vector spaces over any field, but we focus on $\CC$.

\begin{definition} \label{defn:hyperquiverRepresentation}
Fix a hyperquiver $H = (V,E)$. 
Let $\mathbf{d} = (d_1,\ldots,d_n)$ be a \emph{dimension vector}.
A \textit{representation} $\bR = (\mathbf{d},T)$ of $H$ assigns
 \begin{enumerate}[label=(\roman*)]
     \item  A vector space $\mathbb{C}^{d_i}$ to each vertex $i \in V$.
     \item A tensor $T_e \in \mathbb{C}^e$ to each hyperedge $e \in E$, where $\mathbb{C}^e := \mathbb{C}^{d_{t(e)}} \otimes \CC^{d_{s_1(e)}} \otimes \cdots \otimes \CC^{d_{s_{\mu}(e)}}$, which is viewed as a multilinear map $\prod_{j=1}^\mu \mathbb{C}^{d_{s_j(e)}} \to \mathbb{C}^{d_{t(e)}}$.
 \end{enumerate}
\end{definition}
We define for brevity
\begin{equation}{\label{eqn:te_xse}}
T_e(\mathbf{x}_{s(e)}) := T_e (\, \cdot \,, \x_{s_1(e)}, \ldots, \x_{s_\mu(e)}).
\end{equation}
We say that two tensors $T_e$ and $T_{e^\prime}$ \emph{agree up to permutation} if the tuples $v(e)$ and $v(e^\prime)$ agree up to a permutation $\sigma$ and
$$(T_e)_{i_0,i_1,\ldots,i_\mu} = (T_{e^\prime})_{i_{\sigma(0)},i_{\sigma(1)},\ldots,i_{\sigma(\mu)}}.$$

\begin{remark}{\label{remark:choice-of-bases}}
In standard quiver representation theory, abstract vector spaces and linear maps are assigned to vertices and edges, respectively. Aligning with other works in the tensor literature, in this paper we fix a choice of basis for each vector space to identify $\mathbb{C}^{d_i}$ with its dual $(\mathbb{C}^{d_i})^*$, allowing us to assign transposes of matrices or permutations of tensors along the edges. We do not equip the vector spaces with their standard Hermitian inner product $\langle \mathbf{u}, \mathbf{v} \rangle_{\mathbb{C}} = \sum_{i=1}^d u_i\overline{v_i}$ for $\mathbf{u}, \mathbf{v} \in \mathbb{C}^{d}$. Instead, contraction of tensors will be performed with the standard real inner product $\mathbf{u}^\top \mathbf{v} = \langle \mathbf{u}, \mathbf{v} \rangle_{\mathbb{R}} = \sum_{i=1}^d u_i v_i$ for $\mathbf{u}, \mathbf{v} \in \mathbb{C}^{d}$. We will choose bases over $\mathbb{C}$ to consist of real vectors that are orthonormal with respect to $\langle \, \cdot \, , \, \cdot \, \rangle_{\mathbb{R}}$, see \cite[Remark 1.1]{eigenschemes}. The fact that this pairing is degenerate over $\mathbb{C}^d$, i.e. there are non-zero $\mathbf{u} \in \mathbb{C}^{d}$ with $\mathbf{u}^\top \mathbf{u} = 0$, will play a role later in the paper.
\end{remark}

Our main result finds the dimension and degree of the singular vector variety for a hyperquiver representation 
 with sufficiently general tensors on the hyperedges. We say that a property $P$ holds for a generic point of an irreducible affine variety $V$ if there exists a Zariski-open set $U$ in $V$ such that $P$ holds for all points in $U$. We call any point of such a $U$ a \emph{generic point} of $V$.
One way that a hyperquiver representation can be sufficiently generic is for the tuple of tensors $(T_e \mid e \in E)$ assigned to its edges to be generic; that is, a generic point of $\prod_{e \in E} \otimes_{i=0}^{\mu(e)} \mathbb{C}^{d_{s_i(e)}}$. This holds, for example, in Figure~\ref{fig:Jordan-quiver} and~\ref{fig:Kronecker-quiver}.
But our notion of genericity allows tensors on different hyperedges to coincide, as in Figure~\ref{fig:Loop-quiver}. Our genericity condition is encoded by a partition of the hyperedges.
\begin{definition}[Genericity of  a hyperquiver representation]
\label{def:generic}
\mbox{}
\begin{enumerate}[label=(\roman*)]
    \item A \emph{partition} of a hyperquiver $H = (V,E)$ is a partition of its hyperedges $E = \coprod_{r=1}^M E_r$ such that for any hyperedges $e, e^\prime, e^{\prime\prime} \in E_r$,
    \mbox{}
    \begin{enumerate}
        \item the indices $\mu(e)$ and $\mu(e^\prime)$ equal the same number $\mu$
        \item the tuples $v(e)$ and $v(e^\prime)$ coincide up to a permutation $\sigma$ of the set $\{0,1,\ldots,\mu\}$, which must be the identity permutation if $e = e^\prime$
        % \vspace{-1em}
        \item if $\sigma$ and $\sigma^\prime$ are the permutations in (b) for $v(e)\rightarrow v(e^\prime)$ and $v(e^{\prime\prime})\rightarrow v(e^\prime)$ respectively, and if $e \neq e^{\prime\prime}$, then $\sigma(0) \neq \sigma^\prime(0)$.
    \end{enumerate}
    \item The \emph{partition} of a representation $\bR = (\mathbf{d},T)$ is the unique partition of $H$ such that for any $e,e^\prime \in E_r$, the tensors on $e$ and $e^\prime$ agree up to a permutation $\sigma$.
    \item The representation $\mathbf{R} = (\mathbf{d},T)$ is \emph{generic} if given hyperedges $e_r \in E_r$ for $r \in [M]$, the tuple of tensors $\left(T_{e_1},T_{e_2},\ldots,T_{e_M}\right)$ is a generic point in $\prod_{r =1}^M \mathbb{C}^{e_r}$.
\end{enumerate}
\end{definition}

\begin{definition}\label{defn:svv}
The set of \textit{singular vector tuples} $\cS(\bR)$ of a representation $\bR$ consists of tuples $\mathbf{\chi} = 
([\mathbf{x}_1], \ldots, [\mathbf{x}_n]) \in 
\prod_{i=1}^n \P(\CC^{d_i})$ such that
\begin{equation}
    \label{eqn:sv_def}
T_e(\mathbf{x}_{s(e)}) = \lambda_e \mathbf{x}_{t(e)},
\end{equation}
for some scalar $\lambda_e \in \mathbb{C}$, for every edge $e \in E$. 
\end{definition}

\begin{remark}
    The scalars $\lambda_e$ in~\eqref{eqn:sv_def} can be thought of as the singular values of the singular vector tuple $(\mathbf{x}_1, \ldots, \mathbf{x}_n)$. However, the non-homogeneity of~\eqref{eqn:sv_def} means that rescaling vectors in the tuple can change the singular values. We say that a singular vector tuple has a \textit{singular value zero} if $\lambda_e = 0$ for some edge $e\in E$. 
\end{remark}

The singular vector tuples $\mathbf{\chi} = ([\mathbf{x}_1],\ldots,[\mathbf{x}_n])$ are the tuples whose $d_{t(e)} \times 2$ matrix
\[
M_e(\mathbf{x}) := \begin{pmatrix}
| & | \\
T_e(\mathbf{x}_{s(e)}) & \mathbf{x}_{t(e)} \\
| & | 
\end{pmatrix}
\]
has rank $\leq 1$ for all $e \in E$. 
The rank of this matrix depends only on the points $[\mathbf{x}_i] 
\in \P(\CC^{d_i})$, and not on the vectors $\mathbf{x}_i \in \CC^{d_i}$. Therefore, the set of singular vector tuples $\cS(\bR)$ is a scheme in the multiprojective space $X = \prod_{i=1}^n \mathbb{P}(\mathbb{C}^{d_i})$, whose defining equations are the $2 \times 2$ minors of all matrices $M_e(\mathbf{x})$ for $e \in E$. When $\bR$ is a generic representation, our main Theorem \ref{thm:main} in the next section shows that $\cS(\bR)$ is in fact a smooth variety. Thus, when $\bR$ is a generic representation, we call $\cS(\bR)$ the \textit{singular vector variety} of $\bR$. When we speak of the degree of the singular vector variety $\mathcal{S}(\mathbf{R})$, we refer to the degree of its image under the Segre embedding $s: X \xhookrightarrow[]{} \mathbb{P}^D$, for $D = \prod_{i=1}^n d_i - 1$.

\begin{example}
Being a singular vector tuple is not invariant under an arbitrary change of basis. For example, the quiver in Figure \ref{fig:quiver-examples}(b) with a generic square matrix $A: \mathbb{C}^d \rightarrow \mathbb{C}^d$ has $d$ singular vector pairs $([\x],[\y])$. However, there exist change of basis matrices $M_1,M_2 \in GL(d,\mathbb{C})$ such that $M_2AM_1^{-1} = I_d$, and the identity matrix $I_d$ has infinitely many singular vector pairs: all pairs $([\mathbf{z}],[\mathbf{z}])$. Therefore, the singular vector variety is not $GL(d_i,\mathbb{C})$-invariant. As discussed in the introduction and in \cite[Remark 1.1]{eigenschemes} and \cite[Theorem 2.20]{qi2017tensor}, the property of being a singular vector tuple is preserved by an complex orthogonal change of basis. Therefore, the singular vector variety is $O(d_i,\mathbb{C})$-invariant, with respect to the standard real inner product between two complex vectors, see Remark \ref{remark:choice-of-bases}.
\end{example}

\section{Main theorem and its consequences}\label{sec:consequences}

In this section, we present our main result in full generality and study its consequences. Recall that a vector $\mathbf{u} \in \mathbb{C}^d$ is \textit{isotropic} if $\mathbf{u}^\top \mathbf{u} = 0$, see Remark \ref{remark:choice-of-bases}.

\begin{theorem}\label{thm:main}
Let $\bR = (\mathbf{d},T)$ be a generic hyperquiver representation and $\mathcal{S}(\mathbf{R})$ be the set of singular vector tuples of $\mathbf{R}$. Let $N = \sum_{i \in V} (d_i - 1) - \sum_{e \in E} (d_{t(e)} - 1)$. If $N < 0$, then $\cS(\bR) = \varnothing$. If $N \geq 0$, let $D$ be the coefficient of the monomial $h_1^{d_1-1}\cdots h_n^{d_n-1}$ in the polynomial
\begin{equation}{\label{eq:main-thm-polynomial}}
    \left(\sum_{i \in V} h_i\right)^N \cdot \prod_{e \in E} \left( \sum_{k = 1}^{d_{t(e)}} 
    h_{t(e)}^{k-1}
    h_{s(e)}^{d_{t(e)}-k} \right), \quad \text{where} \quad h_{s(e)} := \sum_{j=1}^{\mu(e)} h_{s_j(e)}.
\end{equation}
Then $\cS(\bR) = \varnothing$ if and only if $D = 0$. Otherwise, $\cS(\bR)$ is a smooth variety of pure dimension $N$ and has degree $D$. If $\mathbf{R}$ has finitely many singular vector tuples, then each singular vector tuple is of multiplicity 1, is not isotropic, and has no singular value equal to 0.
\end{theorem}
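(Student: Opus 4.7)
The plan is to follow the vector bundle strategy of Friedland--Ottaviani sketched in the introduction. First, I would construct on the ambient multiprojective space $X = \prod_{i \in V} \P(\C^{d_i})$ a vector bundle $\cE = \bigoplus_{e \in E} \cE_e$, where for each hyperedge $e$ the summand
$$\cE_e \;=\; Q_{t(e)} \otimes \bigotimes_{j=1}^{\mu(e)} \cO_{s_j(e)}(1)$$
pairs the pullback of the tautological quotient bundle $Q_{t(e)}$ on $\P(\C^{d_{t(e)}})$ with the appropriate Segre twists. A representation $\bR$ induces a global section $s_{\bR}$ of $\cE$ whose $e$-component is the composition $\bigotimes_j \cO_{s_j(e)}(-1) \xrightarrow{\,T_e\,} \C^{d_{t(e)}} \otimes \cO \twoheadrightarrow Q_{t(e)}$. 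This component vanishes at a point precisely when $T_e(\x_{s(e)})$ is proportional to $\x_{t(e)}$, so the zero scheme of $s_{\bR}$ coincides set-theoretically with $\cS(\bR)$. Moreover $\rk \cE = \sum_e (d_{t(e)} - 1)$ and $\dim X = \sum_i (d_i - 1)$, so the expected codimension of $\cS(\bR)$ in $X$ is exactly $\dim X - N$.

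Next, I would invoke the Bertini-type theorem of Section~\ref{sec:bertini}. Its hypothesis is that $\cE$ is spanned by the subspace of sections coming from hyperquiver representations at every point outside a low-codimension subset of $X$; establishing this for generic $\bR$ is the content of Section~\ref{sec:almost_generated}. Granting it, the Bertini theorem ensures that for generic $\bR$ the zero scheme of $s_{\bR}$ is either empty (exactly when the top Chern class $c_{\mathrm{top}}(\cE)$ vanishes) or a generically reduced subvariety of $X$ of pure dimension $N$ whose class equals $c_{\mathrm{top}}(\cE)$. When $N = 0$, reducedness delivers the multiplicity-one claim at once.

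To extract the degree, I would compute $c_{\mathrm{top}}(\cE)$ in $H^\ast(X;\Z) \cong \Z[h_1,\ldots,h_n]/(h_i^{d_i})$. The Euler sequence on each factor yields $c_j(Q_i) = h_i^j$ for $0 \le j \le d_i - 1$. Applying the standard identity $c_r(F \otimes L) = \sum_{j=0}^r c_j(F)\, c_1(L)^{r-j}$ with $F = Q_{t(e)}$ of rank $d_{t(e)} - 1$ and $c_1(L) = h_{s(e)}$ gives
$$c_{d_{t(e)} - 1}(\cE_e) \;=\; \sum_{k=1}^{d_{t(e)}} h_{t(e)}^{k-1}\, h_{s(e)}^{d_{t(e)} - k},$$
reproducing the edge factors in \eqref{eq:main-thm-polynomial}. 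Whitney multiplicativity gives $c_{\mathrm{top}}(\cE) = \prod_e c_{d_{t(e)} - 1}(\cE_e)$. Since the degree of $\cS(\bR)$ under the Segre embedding of $X$ is $\int_X c_{\mathrm{top}}(\cE) \cdot \bigl(\sum_i h_i\bigr)^N$, and integration on $X$ extracts the coefficient of the top monomial $\prod_i h_i^{d_i - 1}$, this degree equals $D$; correspondingly $\cS(\bR) = \varnothing$ iff the top Chern class vanishes iff $D = 0$.

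The secondary assertions --- that when $\cS(\bR)$ is finite no tuple has a zero singular value or is isotropic --- I would handle separately by exhibiting, for each such condition, an explicit representation where it fails, then invoking irreducibility of the parameter space $\prod_{r=1}^{M} \C^{e_r}$ to conclude that the failure locus is Zariski-open and dense. I expect the main technical obstacle to be the verification of the almost-generation hypothesis for $\cE$ in Section~\ref{sec:almost_generated}: the genericity condition of Definition~\ref{def:generic} forces tensors on hyperedges in the same block $E_r$ to coincide up to permutation, so the subspace of accessible sections of $\cE$ is far smaller than $H^0(X,\cE)$, and clause (c) of Definition~\ref{def:generic}(i) appears to be engineered precisely to retain enough independence to cover the necessary open subset of $X$.
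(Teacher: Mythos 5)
Your strategy --- build the bundle $\cE = \bigoplus_{e} \mathscr{Q}(d_{t(e)}) \otimes \bigotimes_j \cO_{s_j(e)}(1)$ (the paper's $\mathscr{B}(\bR)$; see Proposition~\ref{prop:Be_properties}(c)), exhibit $\cS(\bR)$ as the zero locus of the induced section, invoke the almost-generated Bertini theorem, and extract the dimension and degree from the top Chern class --- is exactly the paper's. Your Chern class computation is arguably cleaner: you read off $c_j(\mathscr{Q}(d_i)) = h_i^j$ from the sequence~\eqref{exact-sequence-tfq} and apply the line-bundle identity $c_r(\mathscr{Q}(d_i) \otimes L) = \sum_{j=0}^r c_j(\mathscr{Q}(d_i))\,c_1(L)^{r-j}$ directly, whereas the paper passes through Chern characters and formal roots of unity to land on the same factor $\sum_{k=1}^{d_{t(e)}} h_{t(e)}^{k-1} h_{s(e)}^{d_{t(e)}-k}$. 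The dimension, degree, and emptiness assertions follow as you describe.

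The genuine gap is the last sentence of the theorem. You propose to establish that a generic representation with $N=0$ has no isotropic tuple and no zero singular value by exhibiting an explicit representation with these properties and then invoking irreducibility of $\prod_r \C^{e_r}$. This is not actionable: verifying the properties for a concrete $\bR$ is exactly as hard as the statement you want to prove, and you offer no candidate $\bR$. The paper settles both points using the same bundle machinery you have already set up, and your plan misses that. For non-isotropy, the conclusion of Theorem~\ref{thm:bertiniTheorem}(b) that the zeros avoid $\bigcup_i Y_i$ is conditional on the rank inequality $\rk \mathscr{B}_\alpha > \dim Y_\alpha$ for every nonempty $\alpha$; here $Y_\alpha$ is the product of isotropic quadrics in the coordinates of $\alpha$, and the inequality holds because the fibers of $\mathscr{B}_\alpha$ are carved out of those of $\mathscr{B}(\bR)$ by only $|\alpha|-1$ linearly independent equations~\eqref{tau-equations}, while $\dim Y_\alpha = \dim X - |\alpha|$. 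For zero singular values, the paper builds a second family of bundles $\mathscr{B}(\bR,F)$ for nonempty $F \subseteq E$, replacing $\mathscr{Q}(d_{t(e)})$ by the free bundle $\mathscr{F}(d_{t(e)})$ on edges $e \in F$; the generic zero locus is then exactly the set of tuples with $\lambda_e = 0$ for $e \in F$ (Proposition~\ref{prop:zeros-are-singular-vectors-F}), and since $\rk \mathscr{B}(\bR,F) = \rk \mathscr{B}(\bR) + |F| > \dim X$ when $N=0$, Theorem~\ref{thm:bertiniTheorem}(a) forces it to be empty. Neither the rank comparison nor the variant bundle appears in your proposal, and without them the non-isotropy and zero-singular-value claims do not follow. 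Note also that the multiplicity-one conclusion in Theorem~\ref{thm:bertiniTheorem}(b) is stated under the same rank hypothesis, so even that part of your argument is leaning on a verification you have not supplied.
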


Note that the partition from Definition~\ref{def:generic} does not appear in the statement of Theorem~\ref{thm:main}: the partition provides a genericity condition for the result to hold, but the dimension and degree of the singular vector variety do not depend on the partition. Next we give a sufficient condition for a hyperquiver representation to consist of finitely many points. This condition applies to Figure~\ref{fig:loop-hyperquiver-ex} and Figure~\ref{fig:Jordan-hyperquiver-ex}, but not to Figure~\ref{fig:Kronecker-hyperquiver-ex}.

\begin{corollary}
\label{cor:finiteSVV} 
The hyperquivers that have finitely many singular vector tuples in a generic representation, for any choice of dimension vector, are those whose vertices each have exactly one incoming hyperedge.
\end{corollary}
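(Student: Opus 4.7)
The plan is to invoke Theorem~\ref{thm:main} and translate its dimension formula into a condition on the in-degrees of the hyperquiver. For each vertex $i \in V$, let $\iota(i) := |\{e \in E : t(e) = i\}|$ denote the number of hyperedges with target $i$. Regrouping the edge sum by target vertex, $\sum_{e \in E}(d_{t(e)} - 1) = \sum_{i \in V}\iota(i)(d_i-1)$, recasts the dimension as
\begin{equation*}
N \;=\; \sum_{i \in V}(d_i - 1)\bigl(1 - \iota(i)\bigr).
\end{equation*}
Theorem~\ref{thm:main} gives $\mathcal{S}(\mathbf{R})$ pure dimension $N$ whenever it is non-empty, so the task reduces to understanding when this linear function of the $d_i - 1$ has the correct sign.

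For the sufficiency direction, if $\iota(i) = 1$ at every vertex then every coefficient $1 - \iota(i)$ vanishes, so $N = 0$ identically in $\mathbf{d}$. Theorem~\ref{thm:main} then forces $\mathcal{S}(\mathbf{R})$ to be either empty or zero-dimensional, and in either case a finite set.

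For necessity, suppose some vertex $i^*$ satisfies $\iota(i^*) \neq 1$. I would specialise to the dimension vector with $d_{i^*} = 2$ and $d_j = 1$ for $j \neq i^*$, obtaining $N = 1 - \iota(i^*)$. In the case $\iota(i^*) = 0$, we have $N = 1$ and every edge has $d_{t(e)} = 1$, so its factor in \eqref{eq:main-thm-polynomial} collapses to $1$; the polynomial reduces to $\sum_{i \in V} h_i$, whose coefficient at the target monomial $h_{i^*}$ is $D = 1 \neq 0$. By Theorem~\ref{thm:main}, $\mathcal{S}(\mathbf{R})$ is then positive-dimensional, hence infinite. In the case $\iota(i^*) \geq 2$ we instead get $N < 0$, which by Theorem~\ref{thm:main} forces $D = 0$ and $\mathcal{S}(\mathbf{R}) = \varnothing$; under the convention that ``finitely many singular vector tuples'' requires a non-empty finite count, this subcase is likewise excluded.

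The only delicate point is the $\iota(i^*) \geq 2$ subcase: the variety is always finite as a set but is generically empty, so the necessity direction relies on reading ``finitely many'' as demanding a non-empty finite count. This is precisely what pins the characterisation to \emph{exactly} one incoming hyperedge per vertex, since only then does $N$ vanish identically in $\mathbf{d}$ — ruling out both $N > 0$ (infinitely many points) and $N < 0$ (none at all).
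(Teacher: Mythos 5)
Your proof follows the same core idea as the paper's — translating vanishing of $N$ across all dimension vectors into the in-degree condition — but it is considerably more careful. The paper's own proof is terse: it proves only the necessity direction, and it starts from the premise that ``$N = 0$ for all dimensions $d_i$'' without first establishing that this is what finiteness for all generic representations amounts to. Your regrouping $N = \sum_{i\in V}(d_i-1)(1-\iota(i))$ and your specialisation to $d_{i^*}=2$, $d_j=1$ supply exactly the argument the paper leaves implicit, and you correctly observe that the statement only holds under the convention that ``finitely many'' means a \emph{non-empty} finite set. (Without that convention the corollary would fail: a hyperquiver with $\iota(i) \geq 1$ everywhere and $\iota(i^*) \geq 2$ at some vertex — e.g.\ two self-loops — has $N \leq 0$ identically in $\mathbf d$, hence $\mathcal S(\mathbf R)$ is always finite, though possibly empty.)

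One small inconsistency: in the sufficiency direction you conclude that $\mathcal{S}(\mathbf{R})$ is ``either empty or zero-dimensional, and in either case a finite set,'' but under the non-empty convention you adopt for necessity, you must also rule out emptiness when $\iota \equiv 1$. This is easy to close: when each vertex has exactly one incoming hyperedge $e_i$, the polynomial \eqref{eq:main-thm-polynomial} reduces to $\prod_{i\in V}\sum_{k=1}^{d_i} h_i^{k-1}h_{s(e_i)}^{d_i-k}$, whose coefficients are all non-negative, and the product of the $k=d_i$ terms already contributes $\prod_i h_i^{d_i-1}$ with coefficient $1$. Hence $D \geq 1$ for every dimension vector, and $\mathcal{S}(\mathbf{R})$ is never empty. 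With this one line added, your argument is complete and in fact tightens the paper's exposition.
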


\begin{proof}
If $\dim \mathcal{S}(\mathbf{R}) = N = \sum_{i \in V} (d_i - 1) - \sum_{e \in E} (d_{t(e)} - 1) = 0$ for all dimensions $d_i$, then $\sum_{i \in V} (d_i - 1) = \sum_{e \in E} (d_{t(e)} - 1)$ as polynomials in the variables $d_i$. Each $d_i$ appears exactly once on the left hand side of the equation. Hence it must also appear exactly once on the right hand side. Therefore $|V| = |E|$ and every $i \in V$ has exactly one $e \in E$ with $i = t(e)$.
\end{proof}

We show how Theorem~\ref{thm:main} specialises to count the eigenvectors and singular vectors of a generic tensor, as well as to count the solutions to the generalised eigenproblem from~\cite{tensor-eigenvalue-problem}. 

\begin{example}[Eigenvectors of a tensor]
\label{ex:eigenvectors}
We continue our discussion from the introduction. The representation of the $m$-Jordan hyperquiver with a generic tensor $T \in (\C^d)^{\otimes m}$ on its hyperedge is generic in the sense of Definition~\ref{def:generic}, since we have only one hyperedge. There are finitely many eigenvectors, by Corollary~\ref{cor:finiteSVV}. The polynomial \eqref{eq:main-thm-polynomial} is $$\sum_{k=1}^d h^{k-1}((m-1)h)^{d-k} = \left(\sum_{k=1}^d (m-1)^{d-k}\right)h^{d-1} = \frac{(m-1)^d -1}{m-2}h^{d-1}.$$
The coefficient of $h^{d-1}$ is $\frac{(m-1)^d -1}{m-2}$. This agrees with the count for the number of eigenvectors of a generic tensor from~\cite[Theorem 1.2]{cartwright2013number} and \cite[Corollary 3.2]{fornaess1995complex}.
\end{example}

We now consider singular vectors of \textit{partially symmetric tensors}. Let $m = \omega_0 + \omega_1 + \cdots + \omega_p$ be a partition where $\omega_0 = 0$ and each $\omega_j$ is a positive integer for $j \in [p]$. Let $\boldsymbol{\omega} = (\omega_1,\ldots,\omega_p)$ and let $S^{\boldsymbol{\omega}}(\CC^{\mathbf{d}})$ denote the subspace of $\bigotimes_{i = 1}^p (\CC^{d_i})^{\otimes \omega_i}$ consisting of tensors $T$ that are partially symmetric with respect to $\boldsymbol{\omega}$, i.e. $T_{i_1,\ldots,i_m}$ is invariant under any permutation of the indices in the $k$-th group of indices $i_{j_k},i_{j_k+1}\ldots,i_{j_k+\omega_k}$ where $j_k = 1 + \sum_{j=0}^{k-1} \omega_j$, for $k \in [p]$. Then when $p = m$ and $\boldsymbol{\omega} = (1,\ldots,1)$, $S^{\boldsymbol{\omega}}(\CC^{\mathbf{d}})$ is the space of all tensors $\CC^{d_1} \otimes \cdots \otimes \CC^{d_m}$, and when $p = 1$ and $\boldsymbol{\omega} = m$, it is the space of symmetric tensors in $(\CC^{d})^{\otimes m}$.

A \textit{symmetric singular vector tuple} of $T \in S^{\boldsymbol{\omega}}(\CC^{\mathbf{d}})$ is a singular vector tuple of the form $([\x_1],\ldots,[\x_1],\ldots,[\x_p],\ldots,[\x_p]) \in \prod_{i=1}^p\P(\CC^{d_i})^{\omega_i}$. Thus when $p = m$, a symmetric singular vector tuple is a singular vector tuple of a tensor, and when $p = 1$, it is an eigenvector of a symmetric tensor. A result of Friedland and Ottaviani~\cite{friedland-ottaviani} is:

\begin{theorem}[{Friedland and Ottaviani \cite[Theorem 12]{friedland-ottaviani}}]{\label{friedland-ottaviani}}
The number of symmetric singular vectors of a generic partially symmetric tensor $T \in S^{\boldsymbol{\omega}}(\CC^{\mathbf{d}})$ is the coefficient of the monomial $h_1^{d_1-1}\cdots h_n^{d_n-1}$ in the polynomial
\begin{equation}{\label{friedland-ottaviani-formula}}
\prod_{i \in [p]} \frac{\widehat{h_i}^{d_i} - h_i^{d_i}}{\widehat{h_i} - h_i}, \qquad \text{where} \quad \widehat{h_i} := (\omega_i - 1)h_i \; - \sum_{j \in [p]\setminus\{i\}} \omega_j h_j, \; i \in [p].
\end{equation}
Each singular vector tuple is of multiplicity 1, is not isotropic, and does not have singular value 0. \end{theorem}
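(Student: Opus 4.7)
The plan is to derive Friedland--Ottaviani as a direct specialisation of Theorem \ref{thm:main}. First I would identify the appropriate hyperquiver $H = (V, E)$: set $V = [n]$ and let $E = \{e_1, \ldots, e_n\}$, where each hyperedge $e_j$ has target $t(e_j) = j$ and source tuple $s(e_j)$ consisting of the vertices in $[n] \setminus \{j\}$, so that $\mu(e_j) = n-1$. The representation $\bR = (\mathbf{d}, T)$ assigns $\C^{d_i}$ to vertex $i$ and the tensor $T$ (with its $j$-th mode moved into the target slot) to edge $e_j$. By construction, the compatibility conditions $T_{e_j}(\mathbf{x}_{s(e_j)}) = \lambda_{e_j}\,\mathbf{x}_j$ reproduce the defining equations of singular vector tuples of $T$, so $\cS(\bR)$ is precisely the singular vector variety of $T$.

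Once the hyperquiver is in hand, the remaining work is a direct computation with \eqref{eq:main-thm-polynomial}. The dimension formula gives
\[
N = \sum_{i \in V}(d_i - 1) - \sum_{e \in E}(d_{t(e)} - 1) = \sum_{j=1}^n (d_j - 1) - \sum_{j=1}^n (d_j - 1) = 0,
\]
so the first factor $\bigl(\sum_i h_i\bigr)^N$ is trivial and $\cS(\bR)$ is zero-dimensional. For edge $e_j$ we have $h_{s(e_j)} = \sum_{i \neq j} h_i = \widehat{h_j}$ and $d_{t(e_j)} = d_j$, so \eqref{eq:main-thm-polynomial} reduces to
\[
\prod_{j=1}^n \sum_{k=1}^{d_j} h_j^{k-1}\,\widehat{h_j}^{\,d_j - k}.
\]
Applying the geometric series identity $\sum_{k=1}^{d} a^{d-k}\, b^{k-1} = (a^d - b^d)/(a - b)$ with $a = \widehat{h_j}$ and $b = h_j$ converts each factor into $(\widehat{h_j}^{\,d_j} - h_j^{d_j})/(\widehat{h_j} - h_j)$, matching \eqref{friedland-ottaviani-formula}. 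Extracting the coefficient of $\prod_i h_i^{d_i - 1}$ therefore yields the Friedland--Ottaviani count. The assertions on multiplicity one, non-isotropy, and non-vanishing singular values transfer directly from the final clause of Theorem \ref{thm:main}, since $N = 0$ guarantees that $\bR$ has only finitely many singular vector tuples.

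The main issue requiring care is verifying that placing a single generic tensor $T$ (with modes permuted to match each target slot) on every hyperedge yields a generic representation in the sense of Definition \ref{def:generic}. Concretely, one must exhibit a partition of $E$ satisfying Definition \ref{def:generic}(i) together with (ii), so that a single generic tensor suffices as the representative across each partition class. I would invest the most care in this combinatorial step, in particular the chain-type restriction (c), since it is here that the uniform tensor assignment forced by the Friedland--Ottaviani setup must be reconciled with the partition structure underpinning Definition \ref{def:generic}(iii).
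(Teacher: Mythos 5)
Your proposal matches the paper's own derivation (Example \ref{ex:singularvector}) essentially verbatim: the same hyperquiver with $n$ vertices and $n$ hyperedges $e_i$ having $t(e_i)=i$ and $s(e_i)=(1,\ldots,i-1,i+1,\ldots,n)$, the computation $N=0$, the specialisation of \eqref{eq:main-thm-polynomial}, and the identity $\frac{x^d-y^d}{x-y}=\sum_{k=1}^{d}x^{k-1}y^{d-k}$ to recover \eqref{friedland-ottaviani-formula}. The genericity check you rightly flag is resolved in the paper in one sentence by taking the partition with $M=1$ and, for each pair $v(e_i),v(e_j)$, the permutation that swaps $i$ and $j$ and fixes the rest; condition (c) of Definition~\ref{def:generic} then holds because distinct edges $e_i$ and $e_j$ put distinct vertices $i\ne j$ in the final (target) slot, so the chain constraint $\sigma(\mu+1)\ne\sigma'(\mu+1)$ is automatic.
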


The count in the above result interpolates between the number of singular vectors of a generic tensor when $p = m$, and the number of eigenvectors of a generic symmetric tensor when $p = 1$, which is the same count in Example \ref{ex:eigenvectors} for a generic non-symmetric tensor. We explain how Theorem \ref{friedland-ottaviani} follows from Theorem~\ref{thm:main}. We consider the $p = m$ case and then partially symmetric tensors in general.

\begin{example}[Singular vectors of a tensor]
\label{ex:singularvector}
Consider the hyperquiver with $n$ vertices $V = [n]$ and $n$ hyperedges. For every vertex $i \in V$, there is a hyperedge $e_i$ with $s(e_i) = (1, \ldots, i-1, i+1, \ldots, n)$ and target $t(e) = i$. Consider the representation that assigns the vector space $\mathbb{C}^{d_i}$ to each vertex and the same generic tensor $T \in \C^{d_1} \otimes \cdots \otimes \C^{d_n}$ to each hyperedge. On each edge $e_i$, the tensor $T$ is seen as a multilinear map
\begin{align*}
T: \C^{d_1} \times \ldots \times \C^{d_{i-1}} \times \C^{d_{i+1}} \times \cdots \times \C^{d_n} &\to \C^{d_i} \\
(\x_1, \ldots, \x_{i-1},\x_{i+1},\ldots,\x_n) &\mapsto T(\x_1, \ldots, \x_{i-1},\,\cdot\,,\x_{i+1},\ldots,\x_n).
\end{align*}
This representation is generic in the sense of Definition~\ref{def:generic}, where the partition of the edge set $E$ has size $M = 1$ and the permutation $\sigma$ sending $v(e_i)$ to $v(e_j)$ is the one that swaps $i$ and $j$ and keeps all other entries fixed. Figure \ref{fig:Jordan-hyperquiver-ex} illustrates this representation for $n = 3$. The singular vector variety consists of all non-zero vectors $\mathbf{x}_i \in \mathbb{C}^{d_i}$ such that $T(\mathbf{x}_{s(e)}) = \lambda_e \mathbf{x}_{t(e)}$ for some $\lambda_e \in \mathbb{C}$ and all $e \in E$, where $T(\mathbf{x}_{s(e)})$ is defined in \eqref{eqn:te_xse}. That is, the singular vector variety consists of all singular vector tuples of $T$. Corollary \ref{cor:finiteSVV} shows that there are finitely many singular vector tuples. The polynomial \eqref{eq:main-thm-polynomial} specialises to
$$\prod_{i\in [n]} \left( \sum_{k = 1}^{d_i} 
h_i^{k-1}
\widehat{h_i}^{d_i-k} \right), \quad \text{where} \quad \widehat{h_i} := \sum_{j \in [n]\setminus\{i\}} h_j, \; i \in [n].$$
This is equivalent to \eqref{friedland-ottaviani-formula} and \cite[Theorem 1]{friedland-ottaviani}, via the identity $\frac{x^n - y^n}{x-y} = \sum_{k=1}^n x^{k-1}y^{n-k}$.
\end{example}

\begin{example}[Symmetric singular vectors of a partially symmetric tensor]
Suppose $T \in \CC^{3} \otimes \CC^{3} \otimes \CC^{3}$ is a generic tensor that is partially symmetric in its first two entries: $T_{ijk} = T_{jik}$. Then a symmetric singular vector tuple $([\x],[\x],[\mathbf{y}]) \in \P(\CC^3) \times \P(\CC^3) \times \P(\CC^3)$ is defined by the equations
\begin{align*}
    [T(\x,\x,\,\cdot\,)] &= [\mathbf{y}] \\
    [T(\x,\,\cdot\,,\mathbf{y})] &= [\mathbf{x}] \\
    [T(\,\cdot\,,\x,\mathbf{y})] &= [\mathbf{x}]
\end{align*}
in $\P(\CC^3)$. We notice that one of these equations is redundant since $T(\x,\,\cdot\,,\mathbf{y}) = T(\,\cdot\,,\x,\mathbf{y})$ by partial symmetry. Thus, we can take only the first two equations and construct the corresponding hyperquiver representation as shown in Figure \ref{fig:partially-symmetric}. The singular vector tuples of this representation are the same as the symmetric singular vector tuples of $T$. The polynomial in \eqref{eq:main-thm-polynomial} is
$$((h_1+h_2)^2 + (h_1h_2)h_1 + h_1^2)((2h_1)^2 + 2h_1h_2 + h_2^2) = 12 h_1^4 + 18 h_1^3 h_2 + 13 h_1^2 h_2^2 + 5 h_1 h_2^3 + h_2^4$$
Thus, the number of symmetric singular vector tuples of $T$ is 13, which is the same answer given by Theorem \ref{friedland-ottaviani}, and this construction can be extended to any partially symmetric tensor. In general, one can prove a version of Theorem \ref{thm:main} for a representation assigning partially symmetric tensors on every hyperedge by appropriately modifying the conditions in Definition \ref{def:generic} to define a \textit{generic} partially symmetric representation. This is needed to avoid duplicate defining equations for singular vector tuples, as seen above. For example, the representation in Figure \ref{fig:labelled-edges} is not generic for the tensor $T$.
\end{example}

\begin{figure}[htbp]
    \centering
    
\tikzset{every picture/.style={line width=0.75pt}} %set default line width to 0.75pt        

\begin{tikzpicture}[x=0.75pt,y=0.75pt,yscale=-1,xscale=1]
%uncomment if require: \path (0,703); %set diagram left start at 0, and has height of 703

%Curve Lines [id:da5805874398439201] 
\draw [color={rgb, 255:red, 166; green, 189; blue, 0 }  ,draw opacity=1 ][line width=1.5]    (514.55,539.18) .. controls (480.68,421.31) and (332.34,476.49) .. (389.05,534.37) ;
\draw [shift={(390.83,536.12)}, rotate = 223.57] [color={rgb, 255:red, 166; green, 189; blue, 0 }  ,draw opacity=1 ][line width=1.5]    (14.21,-4.28) .. controls (9.04,-1.82) and (4.3,-0.39) .. (0,0) .. controls (4.3,0.39) and (9.04,1.82) .. (14.21,4.28)   ;
%Curve Lines [id:da9333174889398688] 
\draw [color={rgb, 255:red, 166; green, 189; blue, 0 }  ,draw opacity=1 ][line width=1.5]    (394.95,539.18) .. controls (448.83,517.12) and (450.33,454.62) .. (393.33,480.12) ;
%Curve Lines [id:da9898344188692589] 
\draw [color={rgb, 255:red, 65; green, 117; blue, 5 }  ,draw opacity=1 ][line width=1.5]    (392.97,540.22) .. controls (452.04,547.91) and (464.21,631.78) .. (513.65,544.65) ;
\draw [shift={(514.4,543.32)}, rotate = 119.28] [color={rgb, 255:red, 65; green, 117; blue, 5 }  ,draw opacity=1 ][line width=1.5]    (14.21,-4.28) .. controls (9.04,-1.82) and (4.3,-0.39) .. (0,0) .. controls (4.3,0.39) and (9.04,1.82) .. (14.21,4.28)   ;
%Curve Lines [id:da21287569689594688] 
\draw [color={rgb, 255:red, 65; green, 117; blue, 5 }  ,draw opacity=1 ][line width=1.5]    (392.97,540.22) .. controls (357.67,583.29) and (435,597.29) .. (477.67,583.95) ;
%Shape: Ellipse [id:dp15649138636838078] 
\draw  [fill={rgb, 255:red, 144; green, 19; blue, 254 }  ,fill opacity=1 ] (395.1,535.04) .. controls (397.44,535.15) and (399.27,537.09) .. (399.19,539.38) .. controls (399.11,541.67) and (397.14,543.44) .. (394.8,543.32) .. controls (392.46,543.21) and (390.63,541.27) .. (390.72,538.98) .. controls (390.8,536.69) and (392.76,534.92) .. (395.1,535.04) -- cycle ;
%Shape: Ellipse [id:dp8089386476977907] 
\draw  [fill={rgb, 255:red, 144; green, 19; blue, 254 }  ,fill opacity=1 ] (514.7,535.04) .. controls (517.04,535.15) and (518.87,537.09) .. (518.79,539.38) .. controls (518.71,541.67) and (516.74,543.44) .. (514.4,543.32) .. controls (512.06,543.21) and (510.23,541.27) .. (510.32,538.98) .. controls (510.4,536.69) and (512.36,534.92) .. (514.7,535.04) -- cycle ;

% Text Node
\draw (351,529) node [anchor=north west][inner sep=0.75pt]    {$\mathbb{C}^{3}$};
% Text Node
\draw (525.23,529) node [anchor=north west][inner sep=0.75pt]    {$\mathbb{C}^{3}$};
% Text Node
\draw (443.97,481.02) node [anchor=north west][inner sep=0.75pt]    {$\mathcal{T}$};
% Text Node
\draw (465.3,556.69) node [anchor=north west][inner sep=0.75pt]    {$\mathcal{T}$};

\end{tikzpicture}

    \caption{A representation corresponding to the symmetric singular vector tuples of a partially symmetric tensor $T_{ijk} = T_{jik}$. The light-green and dark-green edges are the contractions $T(\x,\,\cdot\,,\mathbf{y})$ and $T(\x,\x,\,\cdot\,)$ respectively.}
    \label{fig:partially-symmetric}
\end{figure}

\begin{example}[The generalised tensor eigenvalue problem]
\label{ex:generalised_eigenproblem}
Consider a generic representation of the \textit{Kronecker hyperquiver} with a generic pair of tensors $A, B \in \C^{d_2}\otimes(\C^{d_1})^{\otimes (m-1)}$, see Figure \ref{fig:Kronecker-hyperquiver-ex} with $m = 3$ and $d = d_1 = d_2$. The edge set $E$ has a partition with $M = 2$. We remark that Corollary \ref{cor:finiteSVV} implies that there will not be finitely many singular vector tuples for \textit{all} representations of this hyperquiver. There will be a non-zero finite number of singular vectors if and only if $d := d_1 = d_2$ since this is when $N = 0$ in Theorem \ref{thm:main}.
The singular vector tuples are the non-zero pairs $\mathbf{x}, \mathbf{y} \in \mathbb{C}^d$ such that $A(\,\cdot\,,\mathbf{x},\ldots,\mathbf{x}) = \lambda^\prime \mathbf{y}$ and $B(\,\cdot\,,\mathbf{x},\ldots,\mathbf{x}) = \lambda^{\prime\prime} \mathbf{y}$, for some $\lambda^\prime, \lambda^{\prime\prime} \in \mathbb{C}$. This reduces to the single equation
\begin{equation}{\label{eq:generalized-eigenvalue-problem}}
A(\,\cdot\,,\mathbf{x},\ldots,\mathbf{x}) = \lambda B(\,\cdot\,,\mathbf{x},\ldots,\mathbf{x}) 
\end{equation}
for some $\lambda \in \mathbb{C}$. This is a tensor-analogue of the generalised eigenvalue problem for two matrices. When $A$ and $B$ are real with $A$ symmetric, $B$ positive definite, and $m$ even, the generalised eigenvectors $\x$ are the critical points of the spherical optimisation problem \eqref{eq:genealised-tensor-functional} \cite[Thoerem 4.58]{qi2017tensor}. It was shown in \cite[Corollary 16]{friedland-ottaviani} and \cite[Theorem 2.1]{tensor-eigenvalue-problem} that there are $d(m-1)^{d-1}$ generalised tensor eigenvalue pairs $\mathbf{x}$ and $\mathbf{y}$ for the tensors $A$ and $B$. Our general formula in Theorem \ref{thm:main} also recovers this number, as follows. The polynomial \eqref{eq:main-thm-polynomial} is
\begin{equation}{\label{kronecker-polynomial}}
\left(\sum_{k=1}^d h_2^{k-1}((m-1)h_1)^{d-k}\right)\left(\sum_{\ell=1}^d h_2^{\ell-1}((m-1)h_1)^{d-\ell}\right).
\end{equation}
A monomial $h_1^{d-1}h_2^{d-1}$ is obtained from the product of a $k$-th summand and an $\ell$-th summand such that $k + \ell = d+1$. 
There are $d$ such pairs of summands $k, \ell \in \{1, \ldots, d\}$.
Each such monomial will have a coefficient of $(m-1)^{d-1}$. Hence the coefficient of $h_1^{d-1}h_2^{d-1}$ in~\eqref{kronecker-polynomial} is $d(m-1)^{d-1}$.
\end{example}

Now we find the dimension and degree of the singular vector variety $\mathcal{S}(\mathbf{R})$ for a generic representation $\mathbf{R}$ of a hyperquiver with a single hyperedge, as shown in Figure \ref{generalSingleHyperedge}.
\begin{figure}[htbp]
    \centering
    
\tikzset{every picture/.style={line width=0.75pt}} %set default line width to 0.75pt        

\begin{tikzpicture}[x=0.75pt,y=0.75pt,yscale=-1,xscale=1]
%uncomment if require: \path (0,703); %set diagram left start at 0, and has height of 703

%Curve Lines [id:da6515340395501954] 
\draw [color={rgb, 255:red, 245; green, 166; blue, 35 }  ,draw opacity=1 ][line width=1.5]    (85.42,120.63) .. controls (98.2,161.07) and (157.4,143.47) .. (155.8,179.47) ;
%Curve Lines [id:da3331183600153076] 
\draw [color={rgb, 255:red, 245; green, 166; blue, 35 }  ,draw opacity=1 ][line width=1.5]    (155.62,122.43) .. controls (155.8,144.27) and (156.6,151.47) .. (155.8,179.47) ;
%Curve Lines [id:da04614141670834693] 
\draw [color={rgb, 255:red, 245; green, 166; blue, 35 }  ,draw opacity=1 ][line width=1.5]    (155.8,179.47) .. controls (155.8,143.47) and (219,161.07) .. (224.42,122.63) ;
%Curve Lines [id:da995144485390477] 
\draw [color={rgb, 255:red, 245; green, 166; blue, 35 }  ,draw opacity=1 ][line width=1.5]    (155.8,179.47) .. controls (155.8,180.94) and (155.8,193.25) .. (155.5,204.68) ;
\draw [shift={(155.42,207.63)}, rotate = 271.79] [color={rgb, 255:red, 245; green, 166; blue, 35 }  ,draw opacity=1 ][line width=1.5]    (14.21,-4.28) .. controls (9.04,-1.82) and (4.3,-0.39) .. (0,0) .. controls (4.3,0.39) and (9.04,1.82) .. (14.21,4.28)   ;
%Shape: Ellipse [id:dp1754743111490673] 
\draw  [fill={rgb, 255:red, 144; green, 19; blue, 254 }  ,fill opacity=1 ] (155.57,203.49) .. controls (157.91,203.6) and (159.74,205.55) .. (159.66,207.84) .. controls (159.57,210.12) and (157.61,211.89) .. (155.27,211.78) .. controls (152.93,211.66) and (151.1,209.72) .. (151.18,207.43) .. controls (151.27,205.14) and (153.23,203.38) .. (155.57,203.49) -- cycle ;
%Shape: Ellipse [id:dp1982096885519269] 
\draw  [fill={rgb, 255:red, 144; green, 19; blue, 254 }  ,fill opacity=1 ] (85.57,116.49) .. controls (87.91,116.6) and (89.74,118.55) .. (89.66,120.84) .. controls (89.57,123.12) and (87.61,124.89) .. (85.27,124.78) .. controls (82.93,124.66) and (81.1,122.72) .. (81.18,120.43) .. controls (81.27,118.14) and (83.23,116.38) .. (85.57,116.49) -- cycle ;
%Shape: Ellipse [id:dp9564169337014283] 
\draw  [fill={rgb, 255:red, 144; green, 19; blue, 254 }  ,fill opacity=1 ] (155.77,118.29) .. controls (158.11,118.4) and (159.94,120.35) .. (159.86,122.64) .. controls (159.77,124.92) and (157.81,126.69) .. (155.47,126.58) .. controls (153.13,126.46) and (151.3,124.52) .. (151.38,122.23) .. controls (151.47,119.94) and (153.43,118.18) .. (155.77,118.29) -- cycle ;
%Shape: Ellipse [id:dp16992194538207972] 
\draw  [fill={rgb, 255:red, 144; green, 19; blue, 254 }  ,fill opacity=1 ] (224.57,118.49) .. controls (226.91,118.6) and (228.74,120.55) .. (228.66,122.84) .. controls (228.57,125.12) and (226.61,126.89) .. (224.27,126.78) .. controls (221.93,126.66) and (220.1,124.72) .. (220.18,122.43) .. controls (220.27,120.14) and (222.23,118.38) .. (224.57,118.49) -- cycle ;

% Text Node
\draw (76,95.81) node [anchor=north west][inner sep=0.75pt]    {$\mathbb{C}^{d_1}$};
% Text Node
\draw (216,96.81) node [anchor=north west][inner sep=0.75pt]    {$\mathbb{C}^{d_{n-1}}$};
% Text Node
\draw (146,221.81) node [anchor=north west][inner sep=0.75pt]    {$\mathbb{C}^{d_n}$};
% Text Node
\draw (147.2,96.61) node [anchor=north west][inner sep=0.75pt]    {$\mathbb{C}^{d_i}$};
% Text Node
\draw (113.2,108.47) node [anchor=north west][inner sep=0.75pt]    {$...$};
% Text Node
\draw (183.6,107.87) node [anchor=north west][inner sep=0.75pt]    {$...$};
% Text Node
\draw (169.2,165.74) node [anchor=north west][inner sep=0.75pt]    {$T$};
% Text Node
% \draw (111,65.87) node [anchor=north west][inner sep=0.75pt]   [align=left] {($\displaystyle m-1$ times)};

\end{tikzpicture}

    \caption{A hyperquiver with a single hyperedge and a representation}
    \label{generalSingleHyperedge}
\end{figure}
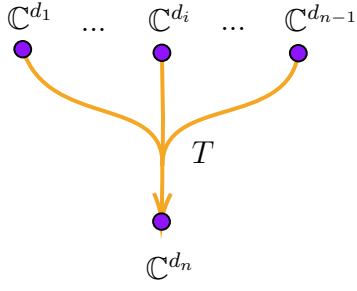
\begin{corollary}\label{cor:single-hyperedge}
Let $H$ be a hyperquiver with one hyperedge with all entries of its tuple of vertices distinct. Let $\mathbf{R}$ be the representation that assigns the vector space $\mathbb{C}^{d_i}$ to each vertex $i$ and a generic tensor to the hyperedge. Then:
\begin{enumerate}[(a)]
\item The dimension of $\mathcal{S}(\mathbf{R})$ is $N = \sum_{i = 1}^{n-1} d_i - n + 1$
\item The degree of $\mathcal{S}(\mathbf{R})$ is
\begin{equation}{\label{degree-single-hyperedge}}
\sum_{k = 1}^{d_n} \sum_{\substack{k_1 + \cdots + k_{n-1} \\ = d_n - k}} \binom{d_n-k}{k_1,\ldots,k_{n-1}}\binom{N}{d_1-1-k_1,\ldots,d_{n-1}-1-k_{n-1},d_n-k}.    
\end{equation}
\end{enumerate}
\end{corollary}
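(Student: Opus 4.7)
The plan is to apply Theorem~\ref{thm:main} directly and extract the coefficient in~\eqref{eq:main-thm-polynomial} via the multinomial theorem. Since the hyperquiver $H$ has a single hyperedge $e$, after relabeling we may take $t(e) = n$ and $s(e) = (1, 2, \ldots, n-1)$, so that $h_{s(e)} = h_1 + \cdots + h_{n-1}$ and $\mu(e) = n-1$. The partition of $H$ in the sense of Definition~\ref{def:generic} is trivial with $M = 1$, so assigning a generic tensor to the hyperedge makes $\mathbf{R}$ generic, and Theorem~\ref{thm:main} applies.

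For part (a), I would just compute
$$N \;=\; \sum_{i=1}^{n}(d_i - 1) \;-\; (d_{t(e)} - 1) \;=\; \sum_{i=1}^{n-1}(d_i - 1) \;=\; \sum_{i=1}^{n-1} d_i - n + 1.$$

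For part (b), the polynomial~\eqref{eq:main-thm-polynomial} specialises to
$$P(h) \;=\; \Bigl(\sum_{i=1}^n h_i\Bigr)^{\!N} \cdot \sum_{k=1}^{d_n} h_n^{k-1}\,(h_1 + \cdots + h_{n-1})^{d_n - k},$$
and the degree is the coefficient of $h_1^{d_1-1}\cdots h_n^{d_n-1}$ in $P(h)$. I would split the extraction according to the outer index $k$. For each fixed $k \in \{1, \ldots, d_n\}$, the multinomial theorem gives
$$(h_1 + \cdots + h_{n-1})^{d_n - k} \;=\; \sum_{k_1 + \cdots + k_{n-1} = d_n - k} \binom{d_n - k}{k_1, \ldots, k_{n-1}} h_1^{k_1} \cdots h_{n-1}^{k_{n-1}}.$$
After multiplying by $h_n^{k-1}$, extracting the coefficient of $h_1^{d_1-1}\cdots h_n^{d_n-1}$ in $(h_1+\cdots+h_n)^N$ times this monomial reduces to extracting the coefficient of $h_1^{d_1-1-k_1}\cdots h_{n-1}^{d_{n-1}-1-k_{n-1}} h_n^{d_n - k}$ from $(h_1 + \cdots + h_n)^N$, which is
$$\binom{N}{\,d_1-1-k_1,\; \ldots,\; d_{n-1}-1-k_{n-1},\; d_n - k\,}.$$
The consistency check $\sum_{i=1}^{n-1}(d_i - 1 - k_i) + (d_n - k) = N$ follows automatically from $\sum k_i = d_n - k$ together with the expression for $N$ from part (a); entries that would be negative contribute zero, so the sum in~\eqref{degree-single-hyperedge} is well-defined as stated. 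Summing over $k$ and the $(k_1,\ldots,k_{n-1})$ then yields~\eqref{degree-single-hyperedge}.

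There is no real obstacle here: the corollary is essentially a bookkeeping exercise once Theorem~\ref{thm:main} is invoked. The only mild subtlety is verifying that the two multinomial coefficients are indexed compatibly, which reduces to the identity $N = \sum_{i=1}^{n-1}(d_i - 1)$ from part (a).
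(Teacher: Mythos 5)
Your proof is correct and follows essentially the same route as the paper: both apply Theorem~\ref{thm:main} to get $N$, then use the multinomial theorem to expand each factor of~\eqref{eq:main-thm-polynomial} and pair the monomials to extract the coefficient of $h_1^{d_1-1}\cdots h_n^{d_n-1}$. The only additions on your end (the explicit relabeling of the edge, the note that $M=1$, and the remark that negative multinomial indices contribute zero) are harmless clarifications of steps the paper leaves implicit.
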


\begin{proof}
The dimension of $\mathcal{S}(\mathbf{R})$ is $N = \left(\sum_{i = 1}^{n} d_i - n\right) - (d_n - 1) = \sum_{i = 1}^{n-1} d_i - n + 1$, by Theorem~\ref{thm:main}. 
The degree 
of $\mathcal{S}(\mathbf{R})$ is the coefficient of $h_1^{d_1-1} \cdots h_n^{d_n-1}$ in the product
$$\underbrace{\left(\sum_{i = 1}^n h_i\right)^N}_{(1)}\underbrace{\left(\sum_{k = 1}^{d_n}\left(\sum_{i = 1}^{n-1} h_i\right)^{d_n-k} h_n^{k-1}\right)}_{(2)}.$$
For each $k \in \{1,\ldots,d_n\}$, the monomial $h_1^{k_1}\cdots h_{n-1}^{k_{n-1}}h_n^{k-1}$ in the expansion of $(2)$ for some $k_1,\ldots,k_{n-1}$ such that $\sum_{i = 1}^{n-1} k_i = d_n - k$ has coefficient $\binom{d_n-k}{k_1,\ldots,k_{n-1}}$. This is combined with the monomial $h_1^{d_1-1-k_1}\cdots h_{n-1}^{d_n-1-k_{n-1}}h_n^{d_n-k}$ from the expansion of (1), which has coefficient
$\binom{N}{d_1-1-k_1,\ldots,d_{n-1}-1-k_{n-1},d_n-k}$. Multiplying these coefficients and summing over those $k_1,\ldots,k_{n-1}$ with $\sum_{i = 1}^{n-1} k_i = d_n - k$, we obtain
$$\sum_{\substack{k_1 + \cdots + k_{n-1} \\ = d_n - k}} \binom{d_n-k}{k_1,\ldots,k_{n-1}}\binom{N}{d_1-1-k_1,\ldots,d_{n-1}-1-k_{n-1},d_n-k}.$$
Summing over $k = 1,\ldots,d_n$ gives the result.
\end{proof}

When $d := d_1 = \cdots = d_n$, we can use Corollary \ref{cor:single-hyperedge} to find the degree of $\mathcal{S}(\mathbf{R})$, which is displayed in Table \ref{tab:degreeOfSingleEdgeHyperquiver} for $d = 1,\ldots,6$ and $n = 2,\ldots,6$. Observe that: (i) the degree row of $d = 2$ consist of the factorial numbers; and (ii) the degree column of $n = 2$ consist of powers of $2$. We explain these observations.
To see (i), if $d = 2$, then \eqref{degree-single-hyperedge} becomes
\begin{equation}{\label{degree-d=2}}
\sum_{k = 1}^{2} \sum_{\substack{k_1 + \ldots + k_{n-1} \\ = 2 - k}} \binom{2-k}{k_1,\ldots,k_{n-1}}\binom{n-1}{1-k_1,\ldots,1-k_{n-1},2-k}.
\end{equation}
When $k = 2$, the only summands satisfying $k_1 + \cdots + k_{n-1} = 2 - k$ is $k_1 = \cdots = k_{n-1} = 0$, which is 1 for the first factor and $(n-1)!$ for the second factor in \eqref{degree-d=2}. When $k = 1$, the only allowed indices are of the form $k_i = 1$ and $k_j = 0$ for all $i \neq j$, from which we get 1 for the first factor and $(n-1)!$ for the second factor in \eqref{degree-d=2}. Since there are $n-1$ such allowed indices, \eqref{degree-d=2} evaluates to $(n-1)! + (n-1)(n-1)! = n!$. For (ii), when $n = 2$, we have
\begin{align*}
\sum_{k = 1}^{d} \sum_{\substack{k_1 = d - k}} \binom{d-k}{k_1}\binom{d-1}{d-1-k_1,d-k} &= \sum_{k = 1}^{d} \binom{d-k}{d-k}\binom{d-1}{k-1,d-k} \\
&= \sum_{k = 0}^{d-1} \binom{d-1}{k,d-1-k} = \sum_{k = 0}^{d-1} \binom{d-1}{k} = 2^{d-1}.
\end{align*}
\begin{table}[htbp]
\centering
\resizebox{0.75\columnwidth}{!}
{\begin{tabular}{|l|l|l|l|l|l|}
\hline
\backslashbox{$d$}{$n$} \multirow{1}{*}{} & \multicolumn{1}{l|}{2} & \multicolumn{1}{l|}{3} & \multicolumn{1}{l|}{4} & \multicolumn{1}{l|}{5}                                & \multicolumn{1}{l|}{6}                                 \\ 
\cline{2-6}
                  % & degree            & deg.            & deg.            & deg.                                           & deg.                                            \\ 
\hline
1                 & 1               & 1               & 1               & 1                                              & 1                                               \\ 
\hline
2                 & 2              & 6               & 24             & 120                                             & 720                                             \\ 
\hline
3                 & 4               & 66              & 1980            & 93240                                           & 6350400                                         \\ 
\hline
4                 & 8              & 840             & 218400           & 110510000                                      & 96864800000                                     \\ 
\hline
5                  & 16               & 11410            & 27512100        & $1.5873 \times 10^{11}$  & $1.89313 \times 10^{15}$  \\ 
\hline
6                 & 32              & 160776          & 3741400000      & $2.54601 \times 10^{14}$ & $4.26416 \times 10^{19}$  \\
\hline
\end{tabular}}
\caption{The degree of the singular vector variety $\mathcal{S}(\mathbf{R})$ of the hyperquiver in Figure \ref{generalSingleHyperedge} with $d_1 = ... = d_n = d$ and generic tensor $T$. The dimension of $\mathcal{S}(\mathbf{R})$ is $N = (d-1)(n-1)$. In particular, $\mathcal{S}(\mathbf{R})$ is positive-dimensional except in the first row.}\label{tab:degreeOfSingleEdgeHyperquiver}
\end{table}

\begin{example}[Empty singular vector variety]
\label{ex:emptySVV}
Consider the quiver in Figure \ref{noSingularVectorQuivers}, where the vertices are assigned vector spaces of dimension $d > 1$, and the two edges are assigned generic matrices $A, B \in \CC^{d \times d}$. 
Any singular vector would need to be an eigenvector of both matrices $A$ and $B$, but a pair of generic matrices $A$ and $B$ do not share an eigenvector. The emptiness of the singular vector variety is captured by Theorem~\ref{thm:main_specialcase}, as $N < 0$.
\begin{figure}[htbp]
    \centering
\tikzset{every picture/.style={line width=0.75pt}} %set default line width to 0.75pt        

\begin{tikzpicture}[x=0.75pt,y=0.75pt,yscale=-1,xscale=1]
%uncomment if require: \path (0,703); %set diagram left start at 0, and has height of 703

%Curve Lines [id:da6462611825309741] 
\draw [color={rgb, 255:red, 74; green, 144; blue, 226 }  ,draw opacity=1 ][line width=1.5]    (435.69,145.79) .. controls (488.69,87.34) and (504.53,158.75) .. (442.81,146.55) ;
\draw [shift={(439.93,145.94)}, rotate = 13] [color={rgb, 255:red, 74; green, 144; blue, 226 }  ,draw opacity=1 ][line width=1.5]    (14.21,-4.28) .. controls (9.04,-1.82) and (4.3,-0.39) .. (0,0) .. controls (4.3,0.39) and (9.04,1.82) .. (14.21,4.28)   ;
%Curve Lines [id:da04800637198763891] 
\draw [color={rgb, 255:red, 208; green, 2; blue, 27 }  ,draw opacity=1 ][line width=1.5]    (435.69,145.79) .. controls (383.53,88.04) and (363.4,159.83) .. (429.42,146.08) ;
\draw [shift={(431.45,145.64)}, rotate = 167.11] [color={rgb, 255:red, 208; green, 2; blue, 27 }  ,draw opacity=1 ][line width=1.5]    (14.21,-4.28) .. controls (9.04,-1.82) and (4.3,-0.39) .. (0,0) .. controls (4.3,0.39) and (9.04,1.82) .. (14.21,4.28)   ;
%Shape: Ellipse [id:dp9593939020450652] 
\draw  [fill={rgb, 255:red, 144; green, 19; blue, 254 }  ,fill opacity=1 ] (435.79,141.64) .. controls (438.13,141.72) and (439.99,143.65) .. (439.93,145.94) .. controls (439.88,148.22) and (437.93,150.01) .. (435.59,149.93) .. controls (433.25,149.85) and (431.4,147.93) .. (431.45,145.64) .. controls (431.51,143.35) and (433.45,141.56) .. (435.79,141.64) -- cycle ;

% Text Node
\draw (368.8,121.3) node [anchor=north west][inner sep=0.75pt]    {$A$};
% Text Node
\draw (484,121.3) node [anchor=north west][inner sep=0.75pt]    {$B$};
% Text Node
\draw (426.8,115.95) node [anchor=north west][inner sep=0.75pt]    {$\mathbb{C}^{d}$};

\end{tikzpicture}
    \caption{A quiver representation with empty singular vector variety}
    \label{noSingularVectorQuivers}
\end{figure}
\end{example}

\begin{example}[Insufficiently generic representations]{\label{ex:insufficient}}
The quiver representations in Figure~\ref{fig:insufficient-examples} with $d > 1$ and generic matrix $A \in \mathbb{C}^{d \times d}$ do not satisfy the genericity conditions in Definition \ref{def:generic}. In Figure \ref{fig:insufficient-examples}(a), the only permutations $\sigma,\sigma^\prime$ on $\{0,1\}$ sending the matrix $A$ on one edge to the matrix $A$ on the other edge and vice versa are the identity permutations, which fail to satisfy the condition $\sigma(0) \neq \sigma^\prime(0)$, causing one of the edges to be redundant. The resulting singular vector variety has dimension $d-1$ and degree $2^{d-1}$ by Corollary \ref{cor:single-hyperedge}, rather than the expected dimension $0$ and degree $d$ in Example \ref{ex:generalised_eigenproblem}. In Figure \ref{fig:insufficient-examples}(b), the singular vectors are the non-zero points $\mathbf{x} \in \mathbb{C}^d$ such that $A^2\mathbf{x} = \lambda A\mathbf{x}$ for some $\lambda \in \mathbb{C}$, of which there are $d$ solutions, rather than the expected $0$ solutions in Theorem \ref{thm:main}.

\begin{figure}[htbp]
     \centering
     \begin{subfigure}[b]{0.145\textwidth}
         \centering
\tikzset{every picture/.style={line width=0.75pt}} %set default line width to 0.75pt        

\begin{tikzpicture}[x=0.75pt,y=0.75pt,yscale=-1,xscale=1]
%uncomment if require: \path (0,703); %set diagram left start at 0, and has height of 703

%Curve Lines [id:da9500477532969942] 
\draw [color={rgb, 255:red, 74; green, 144; blue, 226 }  ,draw opacity=1 ][line width=1.5]    (484.49,435.19) .. controls (520.76,460.58) and (499.85,485.53) .. (492.04,493.92) ;
\draw [shift={(490.01,496.08)}, rotate = 311.86] [color={rgb, 255:red, 74; green, 144; blue, 226 }  ,draw opacity=1 ][line width=1.5]    (14.21,-4.28) .. controls (9.04,-1.82) and (4.3,-0.39) .. (0,0) .. controls (4.3,0.39) and (9.04,1.82) .. (14.21,4.28)   ;
%Curve Lines [id:da8734045381682165] 
\draw [color={rgb, 255:red, 74; green, 144; blue, 226 }  ,draw opacity=1 ][line width=1.5]    (484.49,435.19) .. controls (450.68,459.98) and (472.31,485.68) .. (479.75,493.75) ;
\draw [shift={(481.72,495.91)}, rotate = 230.14] [color={rgb, 255:red, 74; green, 144; blue, 226 } ,draw opacity=1 ][line width=1.5]    (14.21,-4.28) .. controls (9.04,-1.82) and (4.3,-0.39) .. (0,0) .. controls (4.3,0.39) and (9.04,1.82) .. (14.21,4.28)   ;
%Shape: Ellipse [id:dp027099872922673462] 
\draw  [fill={rgb, 255:red, 144; green, 19; blue, 254 }  ,fill opacity=1 ] (490.01,496.08) .. controls (489.94,498.43) and (488.02,500.28) .. (485.73,500.24) .. controls (483.44,500.19) and (481.64,498.25) .. (481.72,495.91) .. controls (481.8,493.56) and (483.72,491.71) .. (486.01,491.75) .. controls (488.3,491.8) and (490.09,493.74) .. (490.01,496.08) -- cycle ;
%Shape: Ellipse [id:dp07305952915519742] 
\draw  [fill={rgb, 255:red, 144; green, 19; blue, 254 }  ,fill opacity=1 ] (488.63,435.28) .. controls (488.56,437.62) and (486.64,439.48) .. (484.35,439.43) .. controls (482.06,439.38) and (480.27,437.44) .. (480.34,435.1) .. controls (480.42,432.76) and (482.34,430.9) .. (484.63,430.95) .. controls (486.92,431) and (488.71,432.94) .. (488.63,435.28) -- cycle ;

% Text Node
\draw (448.3,456.55) node [anchor=north west][inner sep=0.75pt]    {$A$};
% Text Node
\draw (506.3,456.05) node [anchor=north west][inner sep=0.75pt]    {$A$};
% Text Node
\draw (477,412) node [anchor=north west][inner sep=0.75pt]    {$\mathbb{C}^d$};
% Text Node
\draw (477,505) node [anchor=north west][inner sep=0.75pt]    {$\mathbb{C}^d$};

\end{tikzpicture}
         \caption{}
         \label{fig:insufficient-kronecker}
     \end{subfigure}
     \hphantom{hello world}
     \begin{subfigure}[b]{0.2\textwidth}
         \centering

\tikzset{every picture/.style={line width=0.75pt}} %set default line width to 0.75pt        

\begin{tikzpicture}[x=0.75pt,y=0.75pt,yscale=-1,xscale=1]
%uncomment if require: \path (0,703); %set diagram left start at 0, and has height of 703

%Straight Lines [id:da6077239235178307] 
\draw [color={rgb, 255:red, 74; green, 144; blue, 226 }  ,draw opacity=1 ][line width=1.5]    (225.33,655.51) -- (278.19,655.37) ;
\draw [shift={(281.19,655.36)}, rotate = 179.85] [color={rgb, 255:red, 74; green, 144; blue, 226 }  ,draw opacity=1 ][line width=1.5]    (14.21,-4.28) .. controls (9.04,-1.82) and (4.3,-0.39) .. (0,0) .. controls (4.3,0.39) and (9.04,1.82) .. (14.21,4.28)   ;
%Straight Lines [id:da7434275985263596] 
\draw [color={rgb, 255:red, 74; green, 144; blue, 226 }  ,draw opacity=1 ][line width=1.5]    (255.33,596.01) -- (228.44,648.97) ;
\draw [shift={(227.09,651.64)}, rotate = 296.92] [color={rgb, 255:red, 74; green, 144; blue, 226 }  ,draw opacity=1 ][line width=1.5]    (14.21,-4.28) .. controls (9.04,-1.82) and (4.3,-0.39) .. (0,0) .. controls (4.3,0.39) and (9.04,1.82) .. (14.21,4.28)   ;
%Shape: Ellipse [id:dp920052544765936] 
\draw  [fill={rgb, 255:red, 144; green, 19; blue, 254 }  ,fill opacity=1 ] (221.54,653.84) .. controls (222.51,651.7) and (224.99,650.72) .. (227.09,651.64) .. controls (229.18,652.56) and (230.1,655.04) .. (229.13,657.18) .. controls (228.16,659.31) and (225.68,660.29) .. (223.58,659.37) .. controls (221.48,658.45) and (220.57,655.97) .. (221.54,653.84) -- cycle ;
%Shape: Ellipse [id:dp9221623560326415] 
\draw  [fill={rgb, 255:red, 144; green, 19; blue, 254 }  ,fill opacity=1 ] (281.19,655.36) .. controls (281.3,653.02) and (283.24,651.19) .. (285.53,651.27) .. controls (287.82,651.35) and (289.59,653.31) .. (289.48,655.65) .. controls (289.37,657.99) and (287.43,659.82) .. (285.14,659.74) .. controls (282.85,659.66) and (281.08,657.7) .. (281.19,655.36) -- cycle ;
%Straight Lines [id:da6442696194623609] 
\draw [color={rgb, 255:red, 74; green, 144; blue, 226 }  ,draw opacity=1 ][line width=1.5]    (255.33,596.01) -- (284.09,648.64) ;
\draw [shift={(285.53,651.27)}, rotate = 241.35] [color={rgb, 255:red, 74; green, 144; blue, 226 }  ,draw opacity=1 ][line width=1.5]    (14.21,-4.28) .. controls (9.04,-1.82) and (4.3,-0.39) .. (0,0) .. controls (4.3,0.39) and (9.04,1.82) .. (14.21,4.28)   ;
%Shape: Ellipse [id:dp6636191694181586] 
\draw  [fill={rgb, 255:red, 144; green, 19; blue, 254 }  ,fill opacity=1 ] (251.54,594.34) .. controls (252.51,592.2) and (254.99,591.22) .. (257.09,592.14) .. controls (259.18,593.06) and (260.1,595.54) .. (259.13,597.68) .. controls (258.16,599.81) and (255.68,600.79) .. (253.58,599.87) .. controls (251.48,598.95) and (250.57,596.47) .. (251.54,594.34) -- cycle ;

% Text Node
\draw (196,646) node [anchor=north west][inner sep=0.75pt]    {$\mathbb{C}^{d}$};
% Text Node
\draw (246,569.5) node [anchor=north west][inner sep=0.75pt]    {$\mathbb{C}^{d}$};
% Text Node
\draw (223.92,610) node [anchor=north west][inner sep=0.75pt]    {$A$};
% Text Node
\draw (272,610) node [anchor=north west][inner sep=0.75pt]    {$A$};
% Text Node
\draw (295,646) node [anchor=north west][inner sep=0.75pt]    {$\mathbb{C}^{d}$};
% Text Node
\draw (244.92,660.21) node [anchor=north west][inner sep=0.75pt]    {$A$};

\end{tikzpicture}

         \caption{}
         \label{fig:insufficient-triangle}
     \end{subfigure}
        \caption{Insufficiently generic quiver representations}
        \label{fig:insufficient-examples}
\end{figure}
\end{example}

\begin{example}[Periodic orbits of order $n$]
\label{ex:fixed_points}
Consider the hyperquiver representation in Figure \ref{fig:period-n-representation} with a generic tensor $T \in (\mathbb{C}^d)^{\otimes m}$. The singular vector tuples are the non-zero vectors $\mathbf{x}_1,\ldots,\mathbf{x}_n \in \mathbb{C}^d$ such that 
\begin{align*}
T(\,\cdot\,,\mathbf{x}_1,\ldots,\mathbf{x}_1) &= \lambda_1 \mathbf{x}_2 \\
T(\,\cdot\,,\mathbf{x}_2,\ldots,\mathbf{x}_2) &= \lambda_2 \mathbf{x}_3 \\
&\vdots \\
T(\,\cdot\,,\mathbf{x}_n,\ldots,\mathbf{x}_n) &= \lambda_n \mathbf{x}_1
\end{align*}
for some $\lambda_i \in \mathbb{C}$. In other words, each $\mathbf{x}_i$ is a periodic point of order $n$.

The hyperquiver representation is not generic in the sense of Definition \ref{def:generic} as edges with different tuples $v(e)$ up to permutation are assigned the same tensor $T$. Hence Theorem~\ref{thm:main} does not apply. Nonetheless, we predict the dimension and degree, using Theorem~\ref{thm:main}. The result predicts finitely many $n$-periodic points, by Corollary \ref{cor:finiteSVV}. Their count is predicted to be the coefficient of the monomial $h_1^{d-1}\ldots h_n^{d-1}$ in the polynomial
\begin{equation}{\label{eq:periodic-polynomial}}
\left(\sum_{k=1}^d h_2^{k-1}(\mu h_1)^{d-k}\right) \left(\sum_{k=1}^d h_3^{k-1}(\mu h_2)^{d-k}\right)\ldots\left(\sum_{k=1}^d h_1^{k-1}(\mu h_n)^{d-k}\right),
\end{equation}
by Theorem \ref{thm:main},where $\mu = m-1$. This monomial is obtained from the product of terms
$$(h_2^{k-1}(\mu h_1)^{d-k})(h_3^{k-1}(\mu h_2)^{d-k})\ldots(h_1^{k-1}(\mu h_n)^{d-k})$$
coming from each of the respective factors in \eqref{eq:periodic-polynomial}, for each $k \in [d]$. The coefficient of this product is $\mu^{n(d-k)}$. Thus, the coefficient of $h_1^{d-1}\ldots h_n^{d-1}$ in \eqref{eq:periodic-polynomial} is
\begin{equation*}{\label{eq:number-of-n-periodic-fixed-points}}
\sum_{k=1}^{d} \mu^{n(d-k)} = \frac{\mu^{nd} - 1}{\mu^n-1} = \frac{(m-1)^{nd} - 1}{(m-1)^n-1}.    
\end{equation*}
This turns out to be the correct number of period-$n$ fixed points, as proved in \cite[Corollary 3.2]{complex-dynamics-fornaess-sibony}. The number of eigenvectors of a generic tensor is the special case $n = 1$ (Example \ref{ex:eigenvectors}).
\begin{figure}[htbp]
    \centering

\tikzset{every picture/.style={line width=0.75pt}} %set default line width to 0.75pt        

\begin{tikzpicture}[x=0.75pt,y=0.75pt,yscale=-1.3,xscale=1.3]
%uncomment if require: \path (0,703); %set diagram left start at 0, and has height of 703

%Curve Lines [id:da34076433240170934] 
\draw [color={rgb, 255:red, 74; green, 144; blue, 226 }  ,draw opacity=1 ][line width=1.5]    (212.5,478.45) .. controls (214.07,482.08) and (275,452.45) .. (218.5,432.45) ;
%Curve Lines [id:da28545586155000824] 
\draw [color={rgb, 255:red, 74; green, 144; blue, 226 }  ,draw opacity=1 ][line width=1.5]    (165.5,425.45) .. controls (57.42,435.1) and (76.45,459.65) .. (96.79,477.09) ;
\draw [shift={(99,478.95)}, rotate = 219.81] [color={rgb, 255:red, 74; green, 144; blue, 226 }  ,draw opacity=1 ][line width=1.5]    (14.21,-4.28) .. controls (9.04,-1.82) and (4.3,-0.39) .. (0,0) .. controls (4.3,0.39) and (9.04,1.82) .. (14.21,4.28)   ;
%Curve Lines [id:da04408741993877108] 
\draw [color={rgb, 255:red, 74; green, 144; blue, 226 }  ,draw opacity=1 ][line width=1.5]    (165.5,425.45) .. controls (161.5,458.95) and (135,424.95) .. (91,441.45) ;
%Curve Lines [id:da06554023298290446] 
\draw [color={rgb, 255:red, 74; green, 144; blue, 226 }  ,draw opacity=1 ][line width=1.5]    (99,478.95) .. controls (115,503.45) and (125.5,495.95) .. (152.5,493.45) ;
%Curve Lines [id:da9111492572400313] 
\draw [color={rgb, 255:red, 74; green, 144; blue, 226 }  ,draw opacity=1 ][line width=1.5]    (99,478.95) .. controls (149,463.45) and (117.5,494.95) .. (152.5,493.45) ;
%Curve Lines [id:da96459961657514] 
\draw [color={rgb, 255:red, 74; green, 144; blue, 226 }  ,draw opacity=1 ][line width=1.5]    (99,478.95) .. controls (117.62,493.16) and (155.45,499.21) .. (193.19,485.8) ;
\draw [shift={(195.5,484.95)}, rotate = 159.36] [color={rgb, 255:red, 74; green, 144; blue, 226 }  ,draw opacity=1 ][line width=1.5]    (14.21,-4.28) .. controls (9.04,-1.82) and (4.3,-0.39) .. (0,0) .. controls (4.3,0.39) and (9.04,1.82) .. (14.21,4.28)   ;
%Curve Lines [id:da3685757228739035] 
\draw [color={rgb, 255:red, 74; green, 144; blue, 226 }  ,draw opacity=1 ][line width=1.5]    (212.5,478.45) .. controls (230.76,469.48) and (273.86,427.48) .. (167.12,425.48) ;
\draw [shift={(165.5,425.45)}, rotate = 0.84] [color={rgb, 255:red, 74; green, 144; blue, 226 }  ,draw opacity=1 ][line width=1.5]    (14.21,-4.28) .. controls (9.04,-1.82) and (4.3,-0.39) .. (0,0) .. controls (4.3,0.39) and (9.04,1.82) .. (14.21,4.28)   ;
%Curve Lines [id:da7707296118504379] 
\draw [color={rgb, 255:red, 74; green, 144; blue, 226 }  ,draw opacity=1 ][line width=1.5]    (165.5,425.45) .. controls (140.89,425.72) and (145.5,410.45) .. (91,441.45) ;
%Curve Lines [id:da7704339088449241] 
\draw [color={rgb, 255:red, 74; green, 144; blue, 226 }  ,draw opacity=1 ][line width=1.5]    (218.5,432.45) .. controls (250.34,450.42) and (190,460.95) .. (212.5,478.45) ;
%Shape: Ellipse [id:dp4708641368282276] 
\draw  [fill={rgb, 255:red, 144; green, 19; blue, 254 }  ,fill opacity=1 ] (161.71,423.78) .. controls (162.67,421.65) and (165.16,420.67) .. (167.25,421.59) .. controls (169.35,422.51) and (170.26,424.99) .. (169.29,427.12) .. controls (168.33,429.26) and (165.84,430.24) .. (163.75,429.32) .. controls (161.65,428.39) and (160.74,425.92) .. (161.71,423.78) -- cycle ;
%Shape: Ellipse [id:dp5317933209688002] 
\draw  [fill={rgb, 255:red, 144; green, 19; blue, 254 }  ,fill opacity=1 ] (95.21,477.28) .. controls (96.17,475.15) and (98.66,474.17) .. (100.75,475.09) .. controls (102.85,476.01) and (103.76,478.49) .. (102.79,480.62) .. controls (101.83,482.76) and (99.34,483.74) .. (97.25,482.82) .. controls (95.15,481.89) and (94.24,479.42) .. (95.21,477.28) -- cycle ;

% Text Node
\draw (198,482) node [anchor=north west][inner sep=0.75pt]  [rotate=-359.27]  {$...$};
% Text Node
\draw (208.87,468) node [anchor=north west][inner sep=0.75pt]  [rotate=-359.27]  {$...$};
% Text Node
\draw (112,480) node [anchor=north west][inner sep=0.75pt]  [rotate=-359.27]  {$...$};
% Text Node
\draw (142,433) node [anchor=north west][inner sep=0.75pt]  [rotate=-359.27]  {$...$};
% Text Node
\draw (232.42,425) node [anchor=north west][inner sep=0.75pt]    {$T$};
% Text Node
\draw (139.42,477) node [anchor=north west][inner sep=0.75pt]    {$T$};
% Text Node
\draw (82.42,427) node [anchor=north west][inner sep=0.75pt]    {$T$};
% Text Node
\draw (158,405) node [anchor=north west][inner sep=0.75pt]    {$\mathbb{C}^{d}$};
% Text Node
\draw (76,480.91) node [anchor=north west][inner sep=0.75pt]    {$\mathbb{C}^{d}$};

\end{tikzpicture}
    \caption{A hyperquiver representing a period-$n$ orbit}
    \label{fig:period-n-representation}
\end{figure}
\end{example}

Example \ref{ex:fixed_points} seems to suggest that the conditions defining a generic representation in Definition \ref{def:generic} can be weakened so that Theorem \ref{thm:main} holds for a larger class of hyperquiver representations. In Example \ref{ex:insufficient}, which gives the wrong dimensions and degrees, we notice that there are two edges assigned to the same matrix that point to the same target vertex. We conjecture that the genericity conditions in Definition \ref{def:generic} can be modified in the following way so that Theorem \ref{thm:main} still holds: given any two distinct edges $e,e^\prime \in E_r$, in Definition \ref{def:generic}(i.b), only require that the tuples of dimensions $(d_{t(e)},d_{s_1(e)},\ldots,d_{s_\mu(e)})$ and $(d_{t(e^\prime)},d_{s_1(e^\prime)},\ldots,d_{s_\mu(e^\prime)})$ agree up to permutation, and in part (i.c) we only require that $t(e) \neq t(e^\prime)$. These conditions cover Example \ref{ex:fixed_points}, while Example \ref{ex:insufficient} is still excluded.

\bigskip

In the remainder of this section, we explore connections to dynamical systems and message passing.

\begin{example}
A {\em parameterised dynamical system} is a continuous map $f:X \times P \to X$, where $X$ and $P$ are compact triangulable topological spaces, respectively called the {\em state} and {\em parameter} space of $f$. Taking homology with complex coefficients, we obtain a $\CC$-linear map
\[
H_kf: H_k(X \times P) \to H_k(X)
\] in each dimension $k \geq 0$. We know from the K\"{u}nneth formula \cite[Section 5.3]{spanier} that the domain of $H_kf$ is naturally isomorphic to the direct sum $\bigoplus_{i+j=k}H_i(X) \otimes H_j(P)$. Therefore, each $H_kf$ admits a component of the form 
\[
T_k: H_k(X) \otimes H_0(P) \to H_k(X),
\]
We say that a non-zero homology class $\xi \in H_k(X)$ is {\em fixed} by $f$ at a non-zero homology class $\eta \in H_0(P)$ whenever there exists a scalar $\lambda \in \CC$ satisfying $T_k(\xi \otimes \eta) = \lambda \cdot \xi$. The set of all such fixed homology classes (up to scaling) is the singular vector variety of the hyperquiver representation in Figure \ref{fig:homology-loop}.

Let $k := \dim H_k(X)$ and suppose $P$ has $d$ connected components; i.e., $\dim H_0(P) = d$. Then the singular vector variety has dimension $d-1$ and degree equal to the coefficient of $h_1^{k-1}h_2^{d-1}$ in the polynomial %\eqref{eq:main-thm-polynomial}
$(h_1+h_2)^{d-1}\sum_{j=1}^k (h_1+h_2)^{k-j}h_2^{j-1}$,
by Theorem \ref{thm:main}. The monomial $h_1^{k-1}h_2^{d-1}$ arises by pairing the term $\binom{k-j}{i}h_1^i h_2^{k-j-i} h_2^{j-1} = \binom{k-j}{i}h_1^i h_2^{k-i-1}$ in the expanded sum with the term $\binom{d-1}{k-i-1}h_1^{k-i-1}h_2^{(d-1)-(k-i-1)}$ in the expanded parentheses, for all $0 \leq i \leq k-j$ and $1 \leq j \leq k$. Thus, its coefficient is
$$\sum_{j=1}^k\sum_{i=0}^{k-j} \binom{k-j}{i}\binom{d-1}{k-i-1}$$
In particular, if $P$ is connected (i.e., $d = 1$), then there is exactly one non-zero homology class in $H_k(X)$ fixed by~$f$.

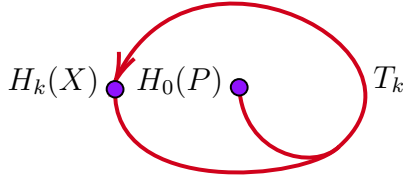
\begin{figure}[htbp]
    \centering
\tikzset{every picture/.style={line width=0.75pt}} %set default line width to 0.75pt        

\begin{tikzpicture}[x=0.75pt,y=0.75pt,yscale=-1,xscale=1]
%uncomment if require: \path (0,703); %set diagram left start at 0, and has height of 703

%Curve Lines [id:da8175158371327764] 
\draw [color={rgb, 255:red, 208; green, 2; blue, 27 }  ,draw opacity=1 ][line width=1.5]    (382.82,596.23) .. controls (381,650.95) and (479.5,642.45) .. (496,624.95) ;
%Curve Lines [id:da46400652307522217] 
\draw [color={rgb, 255:red, 208; green, 2; blue, 27 }  ,draw opacity=1 ][line width=1.5]    (445.42,595.23) .. controls (447.5,629.45) and (483,637.95) .. (496,624.95) ;
%Curve Lines [id:da48629576858398016] 
\draw [color={rgb, 255:red, 208; green, 2; blue, 27 }  ,draw opacity=1 ][line width=1.5]    (496,624.95) .. controls (549.96,576.44) and (421.61,510.78) .. (384.08,589.66) ;
\draw [shift={(382.97,592.09)}, rotate = 293.56] [color={rgb, 255:red, 208; green, 2; blue, 27 }  ,draw opacity=1 ][line width=1.5]    (14.21,-4.28) .. controls (9.04,-1.82) and (4.3,-0.39) .. (0,0) .. controls (4.3,0.39) and (9.04,1.82) .. (14.21,4.28)   ;
%Shape: Ellipse [id:dp6257322480912737] 
\draw  [fill={rgb, 255:red, 144; green, 19; blue, 254 }  ,fill opacity=1 ] (382.97,592.09) .. controls (385.31,592.2) and (387.14,594.15) .. (387.06,596.44) .. controls (386.97,598.72) and (385.01,600.49) .. (382.67,600.38) .. controls (380.33,600.26) and (378.5,598.32) .. (378.58,596.03) .. controls (378.67,593.74) and (380.63,591.98) .. (382.97,592.09) -- cycle ;
%Shape: Ellipse [id:dp10428022407534021] 
\draw  [fill={rgb, 255:red, 144; green, 19; blue, 254 }  ,fill opacity=1 ] (445.57,591.09) .. controls (447.91,591.2) and (449.74,593.15) .. (449.66,595.44) .. controls (449.57,597.72) and (447.61,599.49) .. (445.27,599.38) .. controls (442.93,599.26) and (441.1,597.32) .. (441.18,595.03) .. controls (441.27,592.74) and (443.23,590.98) .. (445.57,591.09) -- cycle ;

% Text Node
\draw (327.1,582.91) node [anchor=north west][inner sep=0.75pt]    {$H_{k}( X)$};
% Text Node
\draw (511.7,582.91) node [anchor=north west][inner sep=0.75pt]    {$T_{k}$};
% Text Node
\draw (392,582.91) node [anchor=north west][inner sep=0.75pt]    {$H_{0}( P)$};

\end{tikzpicture}

    \caption{Fixed points in homology}
    \label{fig:homology-loop}
\end{figure}
\end{example}

\begin{example}
Our hyperquiver framework counts the fixed points of certain multilinear message passing operations, as we now describe.
Assign vectors $\x_i^{(0)} := \x_i \in \CC^{d_i}$ to each $i \in V$.
Apply the multilinear map $T_e$ to the vectors $(\x_{s_1(e)}^{(k)}, \ldots, \x_{s_
\mu(e)}^{(k)})$ at nodes in $s(e)$. 
Then, update the vector at the target vertex $t(e)$ to
 \begin{equation}
 	\label{eqn:our_update}
 	\x_{t(e)}^{(k+1)} := T_e( \x_{s(e)}^{(k)} ) \in \CC^{d_{t(e)}} .
 \end{equation}
 In the limit, one converges to a fixed point of the update steps.  The singular vector variety consists of tuples of directions in $\CC^{d_i}$ that are fixed under these operations, for any order of update steps.

We compare the update~\eqref{eqn:our_update} to message passing graph neural networks, see e.g.~\cite{gilmer2017neural,hamilton2020graph}. The vector at each vertex is the features of the vertex. The vectors typically lie in a vector space of the same dimension, as in Theorem~\ref{thm:main_specialcase}. Message passing operations take the form
\begin{equation}
    \label{eqn:general_update}
    \x_{i}^{(k+1)} = f( \{ \x_i^{(k)} \} \cup \{ \x_j^{(k)} : j \in \mathcal{N}(i) \}) ,
\end{equation} 
where $\mathcal N(i)$ is the neighbourhood of vertex $i$.
That is, the vector of features at node $i$ in the $(k+1)$-th step depends on the features of node $i$ and its neighbours at the $k$-th step.
Our update step in~\eqref{eqn:our_update} is a special case of~\eqref{eqn:general_update}. We relate~\eqref{eqn:our_update} to operations in the literature. 

The function $f$ in~\eqref{eqn:general_update} often involves a non-linearity, applied pointwise. In comparison, we focus on the (multi)linear setting, as discussed for example in~\cite{chen2017supervised}.
There, the authors study the optimisation landscapes of linear update steps,
relating them to power iteration algorithms. Our approach to count the locus of fixed points sheds insight into the global structure of this optimisation landscape, in the spirit of~\cite{catanese2006maximum,draisma2016euclidean}. 
Studying such fixed point conditions directly is the starting point of implicit deep learning~\cite{el2021implicit,gu2020implicit}.

The neighbourhood $\mathcal{N}(i)$, for us, consists of nodes $j$ that appear in a tuple $s(e)$ for some edge $e$ with $t(e) = i$.
Update steps are usually over a graph rather than a hypergraph. The tensor multiplications from~\eqref{eqn:our_update} incorporate higher-order interactions.
Such higher-order structure also appears in tensorised graph neural networks~\cite{hua2022high} and message passing simplicial networks~\cite{bodnar2021weisfeiler}.
\end{example}

\section{The singular vector bundle}
\label{sec:bundles}

In this section, we define the singular vector bundle. It is a vector bundle on $X = \prod_{i=1}^n \mathbb{P}(\mathbb{C}^{d_i})$ whose global sections are associated to hyperquiver representations. The zeros of a section are the singular vectors of the corresponding representation.

Following~\cite[Section 2]{friedland-ottaviani}, for each integer $d > 0$ we consider four vector bundles over $\mathbb{P}(\mathbb{C}^{d})$: the \textit{free} bundle $\mathscr{F}(d)$, the \textit{tautological} bundle $\mathscr{T}(d)$, the \textit{quotient} bundle $\mathscr{Q}(d)$, and the \textit{hyperplane} bundle $\mathscr{H}(d)$. Their fibres at each $[\mathbf{x}] \in \mathbb{P}(\mathbb{C}^{d})$ are

\begin{minipage}{0.35\linewidth}
    \begin{align*}{\label{fibres-of-line-bundles}}
\mathscr{F}(d)_{[\mathbf{x}]} &= \mathbb{C}^{d} \\
\mathscr{T}(d)_{[\mathbf{x}]} &= \Span  (\mathbf{x})
\end{align*}
\end{minipage}
\hfill
\begin{minipage}{0.45\linewidth}
    \begin{align*}
\mathscr{Q}(d)_{[\mathbf{x}]} &= \mathbb{C}^{d}/\Span  (\mathbf{x}) \\
\mathscr{H}(d)_{[\mathbf{x}]} &= \Span  (\mathbf{x})^{\vee}.
\end{align*}
\end{minipage}
\vspace{1em}

\noindent Here if $V$ is a vector space or vector bundle, then $V^\vee$ denotes its dual. Note that the hyperplane bundle $\mathscr{H}(d+1)$ is traditionally denoted in algebraic geometry by $\mathcal{O}_{\mathbb{P}^d}(1)$. We have a short exact sequence of vector bundles
\begin{equation}{\label{exact-sequence-tfq}}
0 \rightarrow \mathscr{T}(d_i) \rightarrow \mathscr{F}(d_i) \rightarrow \mathscr{Q}(d_i) \rightarrow 0.
\end{equation}
There are projection maps $\pi_i: X \rightarrow \mathbb{P}(\mathbb{C}^{d_i})$ with ${\pi_i(\mathbf{\chi}) = [\mathbf{x}_i]}$, where $\mathbf{\chi} = ([\mathbf{x}_1],\ldots,[\mathbf{x}_n])$. We pull back a vector bundle $\mathscr{B}$ over $\mathbb{P}(\mathbb{C}^{d_i})$ to a bundle $\pi_i^* \mathscr{B}$ over $X$, whose fiber at ${\mathbf{\chi}\in X}$ equals $\mathscr{B}_{[\x_i]}$. There is an exact sequence $0 \rightarrow \mathscr{T}(d_i)_{[\x_i]} \rightarrow \mathscr{F}(d_i)_{[\x_i]} \rightarrow \mathscr{Q}(d_i)_{[\x_i]} \rightarrow 0$ of vector spaces at every $[\x_i] \in \mathbb{P}(\mathbb{C}^{d_i})$. Hence there is an exact sequence of vector bundles
\begin{equation}{\label{exact-sequence-tfq-star}}
0 \rightarrow \pi_i^*\mathscr{T}(d_i) \rightarrow \pi_i^*\mathscr{F}(d_i) \rightarrow \pi_i^*\mathscr{Q}(d_i) \rightarrow 0.
\end{equation}

\begin{definition}{\label{def:singular-vector-bundle}}
Let $\mathbf{R} = (\mathbf{d},T)$ be a hyperquiver representation and let $X = \prod_{i=1}^n \mathbb{P}(\mathbb{C}^{d_i})$. 
For each hyperedge $e \in E$, we consider the following vector bundles over $X$.
$$\mathscr{T}(e) := \bigotimes_{j=1}^{\mu(e)} \pi_{s_j(e)}^*\mathscr{T}(d_{s_j(e)}), \qquad \mathscr{B}(e) := \Hom \left(\mathscr{T}(e),\pi_{t(e)}^*\mathscr{Q}(d_{t(e)})\right).$$
We define the \textit{singular vector bundle} of $\mathbf{R}$ over $X$ to be
$\mathscr{B}(\mathbf{R}) := \bigoplus_{e \in E} \mathscr{B}(e)$.
\end{definition}

The vector bundle $\mathscr{B}(\mathbf{R})$ depends on the hypergraph $H$ and the assigned vector spaces~$U$, but not on the multilinear maps $T$. 
It can be written in terms of a partition of edges as 
$\mathscr{B}(\mathbf{R})
 = \bigoplus_{r=1}^M\bigoplus_{e \in E_r}\mathscr{B}(e)$.
We will see that when $\mathbf{R}$ is a generic hyperquiver representation, the zero locus of a generic section of $\mathscr{B}(\mathbf{R})$ is the singular vector variety $\mathcal{S}(\mathbf{R})$. 
We make the following observations about its summands $\mathscr{B}(e)$.

\begin{proposition}
\label{prop:Be_properties}
Let $\mathscr{B}(e) = \Hom \left(\mathscr{T}(e),\pi_{t(e)}^*\mathscr{Q}(d_{t(e)})\right)$. Then the following hold.
\begin{itemize}
    \item[(a)] The fibre of $\mathscr{B}(e)$ at $\mathbf{\chi}$ is $\Hom \left(\Span  \left(\otimes_{j=1}^{\mu(e)} \mathbf{x}_{s_j(e)}\right),\mathbb{C}^{d_{t(e)}}/\Span  (\mathbf{x}_{t(e)})\right)$.
    \item[(b)] The bundle $\mathscr{B}(e)$ has rank $d_{t(e)} - 1$. 
    \item[(c)] We have the isomorphism $\mathscr{B}(e) = \left( \bigotimes_{j=1}^{\mu(e)} \pi_{s_j(e)}^*\mathscr{H}(d_{s_j(e)}) \right) \otimes \pi_{t(e)}^*\mathscr{Q}(d_{t(e)})$.
\end{itemize}
\end{proposition}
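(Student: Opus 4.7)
The plan is to unravel the definitions in sequence, using standard compatibility of fibres with tensor products and pullbacks, and then recognizing $\mathscr{B}(e)$ as a twist of a line bundle.

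\textbf{Step 1 (Proof of (a)).} I would first compute the fibre of $\mathscr{T}(e)$. Since the fibre of a pullback $\pi_i^*\mathscr{B}$ at $\chi$ equals $\mathscr{B}_{[\x_i]}$, and taking fibres commutes with tensor products, we have
\[
\mathscr{T}(e)_\chi \;=\; \bigotimes_{j=1}^{\mu(e)} \mathscr{T}(d_{s_j(e)})_{[\x_{s_j(e)}]} \;=\; \bigotimes_{j=1}^{\mu(e)} \Span(\x_{s_j(e)}) \;=\; \Span\!\left(\bigotimes_{j=1}^{\mu(e)} \x_{s_j(e)}\right),
\]
where the last equality uses that the tensor of one-dimensional subspaces spanned by nonzero vectors is one-dimensional and spanned by the tensor of those vectors. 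Similarly, $(\pi_{t(e)}^*\mathscr{Q}(d_{t(e)}))_\chi = \C^{d_{t(e)}}/\Span(\x_{t(e)})$. Finally, the fibre of a Hom-bundle is the Hom of the fibres, giving (a).

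\textbf{Step 2 (Proof of (b)).} This is immediate from (a): the source of the Hom in the fibre is one-dimensional and the target has dimension $d_{t(e)}-1$, so each fibre of $\mathscr{B}(e)$ has dimension $d_{t(e)}-1$, which equals the rank of the bundle.

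\textbf{Step 3 (Proof of (c)).} For any line bundle $L$ and vector bundle $V$ on the same base, $\Hom(L,V) \cong L^\vee \otimes V$ canonically. Applied with $L = \mathscr{T}(e)$, which is a line bundle by Step 1, and $V = \pi_{t(e)}^*\mathscr{Q}(d_{t(e)})$, we obtain
\[
\mathscr{B}(e) \;\cong\; \mathscr{T}(e)^\vee \otimes \pi_{t(e)}^*\mathscr{Q}(d_{t(e)}).
\]
Now dualization commutes with tensor products (for locally free sheaves of finite rank) and with pullback, so
\[
\mathscr{T}(e)^\vee \;=\; \bigotimes_{j=1}^{\mu(e)} \bigl(\pi_{s_j(e)}^*\mathscr{T}(d_{s_j(e)})\bigr)^\vee \;=\; \bigotimes_{j=1}^{\mu(e)} \pi_{s_j(e)}^*\bigl(\mathscr{T}(d_{s_j(e)})^\vee\bigr).
\]
By definition $\mathscr{H}(d) = \mathscr{T}(d)^\vee$ on each $\P(\C^d)$, so substituting yields the claimed isomorphism.

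The only place where care is needed is Step 1, specifically the identification $\bigotimes_j \Span(\x_{s_j(e)}) = \Span(\bigotimes_j \x_{s_j(e)})$ as subspaces of $\bigotimes_j \C^{d_{s_j(e)}}$; everything else reduces to generalities about pullback, dual, tensor, and Hom of locally free sheaves. I do not anticipate a serious obstacle, since the proposition is essentially a bookkeeping result that records how the constituent bundles of $\mathscr{B}(e)$ combine.
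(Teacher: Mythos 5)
Your proof is correct and follows essentially the same route as the paper: compute the fibre of $\mathscr{T}(e)$ via pullback and tensor, deduce (a) and hence (b), and obtain (c) from $\Hom(\mathscr{T}(e), -) \cong \mathscr{T}(e)^\vee \otimes (-)$ together with the compatibility of duals with tensor products and pullbacks. The one identification you flag as needing care, $\bigotimes_j \Span(\x_{s_j(e)}) = \Span\bigl(\bigotimes_j \x_{s_j(e)}\bigr)$, is exactly the step the paper also uses without further comment.
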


\begin{proof}
The bundle $\mathscr{T}(e)$ has fibres
\begin{align*}
\mathscr{T}(e)_{\mathbf{\chi}} &= \bigotimes_{j=1}^{\mu(e)} \pi_{s_j(e)}^*\mathscr{T}(d_{s_j(e)})_{\mathbf{\chi}} = 
\bigotimes_{j=1}^{\mu(e)} \mathscr{T}(d_{s_j(e)})_{[\mathbf{x}_{s_j(e)}]} \\ 
&= \bigotimes_{j=1}^{\mu(e)} \Span  (\mathbf{x}_{s_j(e)}) = 
\Span  \left(\otimes_{j=1}^{\mu(e)} \mathbf{x}_{s_j(e)}\right).
\end{align*}
The bundle $\pi_{t(e)}^*\mathscr{Q}(d_{t(e)})$ has fibre 
$\pi_{t(e)}^*\mathscr{Q}(d_{t(e)})_{\mathbf{\chi}} = \mathbb{C}^{d_{t(e)}}/\Span  (\mathbf{x}_{t(e)})$. This proves (a).
Then (b) follows, since the dimension of the fibre is $d_{t(e)} - 1$.
To prove (c), observe that
$\mathscr{B}(e) 
 \simeq \mathscr{T}(e)^\vee \otimes \pi_{t(e)}^*\mathscr{Q}(d_{t(e)})$ and that
\begin{align*}
\mathscr{T}(e)^\vee = \left(\bigotimes_{j=1}^{\mu(e)} \pi_{s_j(e)}^*\mathscr{T}(d_{s_j(e)})\right)^\vee &\simeq \bigotimes_{j=1}^{\mu(e)} \left(\pi_{s_j(e)}^*\mathscr{T}(d_{s_j(e)})\right)^\vee \\
&\simeq \bigotimes_{j=1}^{\mu(e)} \pi_{s_j(e)}^*\mathscr{T}(d_{s_j(e)})^\vee = \bigotimes_{j=1}^{\mu(e)} \pi_{s_j(e)}^*\mathscr{H}(d_{s_j(e)}). \qedhere
\end{align*}
\end{proof}

We relate the singular vector variety to the singular vector bundle.
The global sections of a vector bundle $\mathscr{B}$ are denoted by $\Gamma(\mathscr{B})$. They are the holomorphic maps $\sigma: X \to \mathscr{B}$ that send each $\mathbf{\chi} \in X$ to a point in $\mathscr{B}_\mathbf{\chi}$.
A global section of $\mathscr{B}(e)$ is a map sending each $\mathbf{\chi} \in X$ to an element in
\begin{equation}{\label{eq:T_r-notation}}
    \Hom \left(\Span  \left(\otimes_{j=1}^{\mu(e)} \mathbf{x}_{s_j(e)}\right),\mathbb{C}^{d_{t(e)}}/\Span  (\mathbf{x}_{t(e)})\right),
\end{equation} 
by Proposition~\ref{prop:Be_properties}(a). Definition \ref{def:generic}(ii) of a partition gives an equivalence relation between tensors assigned to $E_r$ via permutation of the modes. Following the notation of Definition \ref{def:generic}(iii), we denote by $T_r \in \mathbb{C}^{e_r}$ a representative for the class corresponding to $E_r$, for some $e_r \in E_r$, and we define $T_r(\mathbf{x}_{s(e)}) := T_e(\mathbf{x}_{s(e)})$ for all $e \in E_r$, where $T_e(\mathbf{x}_{s(e)})$ is defined in \eqref{eqn:te_xse}. A tensor $T \in \mathbb{C}^{e_r}$ determines a global section of $\mathscr{B}(e)$ for every $e \in E_r$, which we denote by $L_e(T)$. The map $L_e(T)$ sends $\mathbf{\chi}$ to the map
$$ 
\otimes_{j=1}^{\mu(e)} \mathbf{x}_{s_j(e)} \, \mapsto \, 
  \overline{T(\mathbf{x}_{s(e)})} \in \mathbb{C}^{d_{t(e)}}/\Span  (\mathbf{x}_{t(e)}) .
$$
where $\overline{T(\mathbf{x}_{s(e)})}$ is the image of $T(\mathbf{x}_{s(e)})$ in the quotient vector space $\mathbb{C}^{d_{t(e)}}/\Span  (\mathbf{x}_{t(e)})$.  In other words, following~\cite[Lemma 9]{friedland-ottaviani}, we define the map
\begin{align}
    \begin{split}
    L_e:\mathbb{C}^{e_r} & \longrightarrow \Gamma(\mathscr{B}(e)) \nonumber \\
    T & \longmapsto L_e(T).
\end{split}
\end{align}
We form the composite map
\begin{align}{\label{L-map}}
\begin{split}
L : \bigoplus_{r=1}^M \mathbb{C}^{e_r} & \longrightarrow \Gamma(\mathscr{B}(\mathbf{R})) \\ 
(T_1, \ldots, T_M) & \longmapsto \bigoplus_{r=1}^M \bigoplus_{e \in E_r} L_e(T_r).
\end{split}
\end{align}

We connect the global sections in the image of $L$ to the singular vector tuples of a hyperquiver representation, generalizing~\cite[Lemma 11]{friedland-ottaviani}.

\begin{proposition}{\label{prop:zeros-are-singular-vectors}}
Let $\mathbf{R} = (\mathbf{d},T)$ be a hyperquiver representation. Let $X = \prod_{i=1}^n \mathbb{P}(\mathbb{C}^{d_i})$ and let $\mathscr{B}(\mathbf{R})$ be the singular vector bundle, with $L: \bigoplus_{r = 1}^M \mathbb{C}^{e_r} \rightarrow \Gamma(\mathscr{B}(\mathbf{R}))$ the map in \eqref{L-map}. Then a point $\mathbf{\chi} \in X$ lies in the zero locus of the section $\sigma = L((T_r)_{r=1}^M)$ if and only if $\mathbf{\chi}$ is a singular vector tuple of $\mathbf{R}$.
\end{proposition}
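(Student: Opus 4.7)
The plan is to prove the equivalence by unwinding the definitions edge by edge, translating the vanishing of the section $\sigma = L((T_r)_{r=1}^M)$ into the system of rank conditions defining $\mathcal{S}(\mathbf{R})$.

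First I would exploit the direct sum decomposition $\mathscr{B}(\mathbf{R}) = \bigoplus_{e \in E} \mathscr{B}(e)$: a global section of a direct sum vanishes at $\chi$ if and only if each of its components does, so $\sigma(\chi) = 0$ is equivalent to $L_e(T_r)(\chi) = 0$ for every $r \in [M]$ and every $e \in E_r$. This reduces the claim to a single-edge statement: for a fixed $e \in E$, the evaluation $L_e(T)(\chi)$ is the zero element of the fibre $\mathscr{B}(e)_\chi$ if and only if $T_e(\mathbf{x}_{s(e)}) = \lambda_e \mathbf{x}_{t(e)}$ for some $\lambda_e \in \mathbb{C}$.

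Next I would unpack the fibre using Proposition~\ref{prop:Be_properties}(a): $\mathscr{B}(e)_\chi = \Hom\!\left(\Span\!\left(\otimes_{j=1}^{\mu(e)} \mathbf{x}_{s_j(e)}\right),\; \mathbb{C}^{d_{t(e)}}/\Span(\mathbf{x}_{t(e)})\right)$. Since each $\mathbf{x}_i$ is nonzero (as it represents a point in projective space), the tensor product $\otimes_{j=1}^{\mu(e)} \mathbf{x}_{s_j(e)}$ is a nonzero generator of the one-dimensional source space. A linear map out of a one-dimensional space is zero if and only if it sends this generator to zero, so $L_e(T_r)(\chi) = 0$ is equivalent to $\overline{T_e(\mathbf{x}_{s(e)})} = 0$ in $\mathbb{C}^{d_{t(e)}}/\Span(\mathbf{x}_{t(e)})$. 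This in turn is equivalent to $T_e(\mathbf{x}_{s(e)}) \in \Span(\mathbf{x}_{t(e)})$, i.e.\ to the existence of $\lambda_e \in \mathbb{C}$ with $T_e(\mathbf{x}_{s(e)}) = \lambda_e \mathbf{x}_{t(e)}$, which is exactly the singular vector condition~\eqref{eqn:sv_def} across the edge $e$.

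Finally I would combine these edge-wise equivalences and compare with Definition~\ref{defn:svv} to conclude that $\chi$ lies in the zero locus of $\sigma$ if and only if $\chi \in \mathcal{S}(\mathbf{R})$. The main technical point worth verifying, rather than a genuine obstacle, is that $L_e(T_r)(\chi)$ is well defined on $\Span(\otimes_j \mathbf{x}_{s_j(e)})$ and independent of representative vectors $\mathbf{x}_i$: rescaling $\mathbf{x}_{s_j(e)}$ rescales both the source generator and the image $T_e(\mathbf{x}_{s(e)})$ by the same factor, while rescaling $\mathbf{x}_{t(e)}$ only changes the quotient by a scalar of its chosen generator, so the vanishing of the map is an invariant of the point $\chi \in X$. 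With this check in place, the argument is essentially a direct translation of the bundle condition into the multilinear equations defining $\mathcal{S}(\mathbf{R})$.
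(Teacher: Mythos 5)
Your proof is correct and follows essentially the same unwinding-of-definitions argument as the paper: reduce to one edge via the direct sum, observe that vanishing of a map out of a one-dimensional source is vanishing on the generator, and translate the quotient condition into the existence of the scalar $\lambda_e$. The extra remark about independence of representatives is a reasonable sanity check but not part of the paper's (very terse) proof.
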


\begin{proof}
$L((T_r)_{r=1}^M)(\mathbf{\chi})$ is the $|E|$-tuple of zero maps each in $\mathscr{B}(e)_{\mathbf{\chi}}$ if and only if for all $e \in E_r$ and $r \in [M]$, $L_e(T_r)(\mathbf{\chi})(\otimes_{j=1}^{\mu(e)} \mathbf{x}_{s_j(e)}) = \overline{0}$, if and only if $T_r(\mathbf{x}_{s(e)}) = \lambda_e \mathbf{x}_{t(e)}$ for some $\lambda_e \in \mathbb{C}$, if and only if $\mathbf{\chi}$ is a singular vector tuple of the hyperquiver representation $\mathbf{R}$.
\end{proof}

In light of the preceding result, it becomes necessary to determine the image of $L$ within $\Gamma(\mathscr{B}(\mathbf{R}))$. For this purpose, we make use of the following K\"{u}nneth formula for vector bundles. Note that $H^0(X,\mathscr{B}) := \Gamma(\mathscr{B})$.

\begin{proposition}[{K\"{u}nneth Formula, \cite[Proposition 9.2.4]{kempf_1993}}]{\label{prop:kunneth}}
Let $X$ and $Y$ be complex varieties and $\pi_X: X \times Y \rightarrow X$ and $\pi_Y: X \times Y \rightarrow Y$ be the projection maps. If $\mathscr{F}$ and $\mathscr{G}$ are vector bundles on $X$ and $Y$ respectively, then
$$H^n(X \times Y,\pi_X^*\mathscr{F} \otimes \pi_Y^*\mathscr{G}) \cong \bigoplus_{p+q=n} H^p(X,\mathscr{F}) \otimes H^q(Y,\mathscr{G}).$$
\end{proposition}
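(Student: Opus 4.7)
The plan is to prove the K\"{u}nneth formula via \v{C}ech cohomology, exploiting the fact that we work over the field $\CC$ (so every module is flat and no Tor terms appear). First I would choose affine open covers $\mathcal{U} = \{U_i\}_{i \in I}$ of $X$ and $\mathcal{V} = \{V_j\}_{j \in J}$ of $Y$ on which the vector bundles $\mathscr{F}$ and $\mathscr{G}$ trivialise. Since $\mathscr{F}$ and $\mathscr{G}$ are coherent, Serre vanishing on affine schemes combined with Leray's theorem ensures that \v{C}ech cohomology with respect to these covers agrees with sheaf cohomology, i.e.\ $\check H^\bullet(\mathcal{U},\mathscr{F}) \cong H^\bullet(X,\mathscr{F})$ and similarly for $Y$. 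The product collection $\mathcal{U} \times \mathcal{V} := \{U_i \times V_j\}_{(i,j) \in I \times J}$ is then an affine open cover of $X \times Y$ on which $\pi_X^*\mathscr{F} \otimes \pi_Y^*\mathscr{G}$ trivialises, and \v{C}ech cohomology with respect to $\mathcal{U} \times \mathcal{V}$ computes $H^\bullet(X \times Y, \pi_X^*\mathscr{F} \otimes \pi_Y^*\mathscr{G})$.

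The next step is to identify the relevant \v{C}ech complex as a tensor product. For affine $U \subset X$ and $V \subset Y$ over which $\mathscr{F}$ and $\mathscr{G}$ trivialise, a direct local computation yields
\[
\Gamma\bigl(U \times V,\, \pi_X^*\mathscr{F} \otimes \pi_Y^*\mathscr{G}\bigr) \;\cong\; \Gamma(U,\mathscr{F}) \otimes_\CC \Gamma(V,\mathscr{G}).
\]
Assembling over multi-indices in $I \times J$ and checking compatibility with the \v{C}ech differentials produces an isomorphism of cochain complexes of $\CC$-vector spaces
\[
C^\bullet(\mathcal{U} \times \mathcal{V},\, \pi_X^*\mathscr{F} \otimes \pi_Y^*\mathscr{G}) \;\cong\; \mathrm{Tot}\Bigl(C^\bullet(\mathcal{U},\mathscr{F}) \otimes_\CC C^\bullet(\mathcal{V},\mathscr{G})\Bigr),
\]
where the right side is the total complex of the double complex obtained by tensoring the two \v{C}ech complexes.

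Finally, I would invoke the algebraic K\"{u}nneth formula for the cohomology of a tensor product of cochain complexes. Over the field $\CC$ every vector space is flat, so the K\"{u}nneth short exact sequence degenerates (no $\mathrm{Tor}^1$ term) and yields
\[
H^n\Bigl(\mathrm{Tot}\bigl(C^\bullet(\mathcal{U},\mathscr{F}) \otimes_\CC C^\bullet(\mathcal{V},\mathscr{G})\bigr)\Bigr) \;\cong\; \bigoplus_{p+q=n} H^p(C^\bullet(\mathcal{U},\mathscr{F})) \otimes_\CC H^q(C^\bullet(\mathcal{V},\mathscr{G})).
\]
Combining this with the earlier identifications of the \v{C}ech cohomologies with the sheaf cohomologies produces the claimed decomposition.

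The main obstacle I anticipate is the second step: verifying the tensor-product identity for sections on a product of affine opens and patching it with the \v{C}ech differentials. Locally, where $\mathscr{F} \simeq \mathcal{O}_X^{\oplus r}$ and $\mathscr{G} \simeq \mathcal{O}_Y^{\oplus s}$, the identity reduces to the standard algebraic fact $\mathcal{O}_{X \times Y}(U \times V) \cong \mathcal{O}_X(U) \otimes_\CC \mathcal{O}_Y(V)$ for affine $U,V$; the subtlety is ensuring this gluing respects the \v{C}ech differentials after passing to intersections $U_{i_0} \cap \cdots \cap U_{i_p}$ and $V_{j_0} \cap \cdots \cap V_{j_q}$. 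In an analytic rather than algebraic setting, one would additionally need the cover to be Stein so that the Cartan B vanishing theorem plays the role of Serre's affine vanishing.
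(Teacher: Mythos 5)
The paper does not prove this statement; it is cited directly from Kempf \cite[Proposition 9.2.4]{kempf_1993}, so there is no in-paper argument to compare against. Your \v{C}ech-theoretic strategy is the standard one, and your first step (Leray plus affine vanishing to replace sheaf cohomology by \v{C}ech cohomology) and your local section identity $\Gamma(U\times V,\pi_X^*\mathscr{F}\otimes\pi_Y^*\mathscr{G})\cong\Gamma(U,\mathscr{F})\otimes_\CC\Gamma(V,\mathscr{G})$ on products of affines are both correct. However, the middle step contains a genuine gap: the claimed isomorphism of cochain complexes
\[
C^\bullet(\mathcal{U} \times \mathcal{V},\, \pi_X^*\mathscr{F} \otimes \pi_Y^*\mathscr{G}) \;\cong\; \mathrm{Tot}\Bigl(C^\bullet(\mathcal{U},\mathscr{F}) \otimes_\CC C^\bullet(\mathcal{V},\mathscr{G})\Bigr)
\]
is false. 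The left side in degree $n$ is indexed by $(n+1)$-tuples in $(I\times J)^{n+1}$, while the right side is indexed by pairs consisting of a $(p+1)$-tuple in $I$ and a $(q+1)$-tuple in $J$ with $p+q=n$; these do not match, and the two complexes have different ranks even in tiny examples. With $|I|=|J|=2$, the alternating \v{C}ech complex of the four-element cover $\mathcal{U}\times\mathcal{V}$ has $\binom{4}{2}=6$ summands in degree $1$, two of which are $\Gamma(U_{12}\times V_{12},-)$, coming from the pairs $(U_1\times V_1,\,U_2\times V_2)$ and $(U_1\times V_2,\,U_2\times V_1)$; the total complex has only the four summands $\Gamma(U_{12}\times V_j,-)$ and $\Gamma(U_i\times V_{12},-)$. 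No compatibility check with the differentials can produce an isomorphism between complexes of different dimensions.

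What is true, and what you actually need, is a \emph{quasi-isomorphism}, and that requires its own argument. The cleanest repair is to bypass the cover $\mathcal{U}\times\mathcal{V}$ entirely and work directly with the \v{C}ech double complex $K^{p,q}:=C^p(\mathcal{U},\mathscr{F})\otimes_\CC C^q(\mathcal{V},\mathscr{G})$, which by your section identity has $(p,q)$-term equal to the product of $\Gamma\bigl(U_{i_0\ldots i_p}\times V_{j_0\ldots j_q},\,\pi_X^*\mathscr{F}\otimes\pi_Y^*\mathscr{G}\bigr)$ (take $\mathcal{U},\mathcal{V}$ finite, available by quasi-compactness, so tensor commutes with the products). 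You then must show that $\mathrm{Tot}(K^{\bullet,\bullet})$ computes $H^\bullet(X\times Y,\pi_X^*\mathscr{F}\otimes\pi_Y^*\mathscr{G})$: for instance, sheafify to a double complex $\mathscr{K}^{p,q}$ of pushforwards along the affine open immersions $U_{i_0\ldots i_p}\times V_{j_0\ldots j_q}\hookrightarrow X\times Y$, observe that these sheaves are $\Gamma$-acyclic and that the rows and columns augment to resolutions, hence $\mathrm{Tot}(\mathscr{K}^{\bullet,\bullet})$ is an acyclic resolution of $\pi_X^*\mathscr{F}\otimes\pi_Y^*\mathscr{G}$. (Alternatively, use the Leray spectral sequence for $\pi_X$ and flat base change to compute $R^q(\pi_X)_*(\pi_X^*\mathscr{F}\otimes\pi_Y^*\mathscr{G})\cong\mathscr{F}\otimes_\CC H^q(Y,\mathscr{G})$, and show degeneration.) Once that bridge is in place, the algebraic K\"{u}nneth formula over $\CC$ finishes the proof exactly as you describe.
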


\noindent The following result, which generalises~\cite[Lemma 9 parts (1) and (2)]{friedland-ottaviani}, characterises the image of $L$.

\begin{proposition}{\label{prop:L-bijective}}
The linear map $L: \bigoplus_{r = 1}^M \mathbb{C}^{e_r} \rightarrow \Gamma(\mathscr{B}(\mathbf{R}))$ in \eqref{L-map} is bijective.
\end{proposition}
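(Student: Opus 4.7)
\emph{Proof plan.} The strategy is to reduce the bijectivity claim to a cohomological computation on each hyperedge separately. Since $\mathscr{B}(\mathbf{R}) = \bigoplus_{e \in E} \mathscr{B}(e)$, we have $\Gamma(\mathscr{B}(\mathbf{R})) = \bigoplus_{e} \Gamma(\mathscr{B}(e))$, and the map $L$ restricted to the summand $\mathbb{C}^{e_r}$ takes $T_r$ to $\bigoplus_{e \in E_r} L_e(T_r)$. It therefore suffices to exhibit, for each $e \in E_r$, a canonical linear isomorphism $\mathbb{C}^{e_r} \cong \Gamma(\mathscr{B}(e))$ that coincides with $L_e$; bijectivity of $L$ then follows by assembling these identifications over all edges and partition classes, using the permutation from Definition~\ref{def:generic} to identify $\mathbb{C}^{e_r}$ with $\mathbb{C}^e$ whenever $e \in E_r$.

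To build such an identification, I invoke Proposition~\ref{prop:Be_properties}(c), which writes
\[
\mathscr{B}(e) \;\cong\; \bigotimes_{j=1}^{\mu(e)} \pi_{s_j(e)}^{*}\mathscr{H}(d_{s_j(e)}) \,\otimes\, \pi_{t(e)}^{*}\mathscr{Q}(d_{t(e)}),
\]
and group the pull-backs by vertex. Applying the K\"{u}nneth formula (Proposition~\ref{prop:kunneth}) inductively along $X = \prod_i \mathbb{P}(\mathbb{C}^{d_i})$, the space $\Gamma(\mathscr{B}(e))$ decomposes as a tensor product over vertices $i \in V$ of $H^0$ on $\mathbb{P}(\mathbb{C}^{d_i})$ of the bundle $\mathscr{H}(d_i)^{\otimes a_i(e)}$, where $a_i(e) := |\{j : s_j(e) = i\}|$, with an additional factor $\mathscr{Q}(d_{t(e)})$ attached to the target vertex. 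For each $i \neq t(e)$, standard facts identify $H^0(\mathbb{P}^{d_i-1}, \mathscr{H}(d_i)^{\otimes a_i})$ with the degree-$a_i$ symmetric power of $(\mathbb{C}^{d_i})^{\vee}$. For $i = t(e)$, I use the tautological sequence \eqref{exact-sequence-tfq} suitably twisted, together with the vanishing of $H^0$ and $H^1$ of sufficiently negative line bundles on projective space, to obtain the analogous identification for $H^0(\mathscr{Q}(d_{t(e)}) \otimes \mathscr{H}(d_{t(e)})^{\otimes a_{t(e)}(e)})$.

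Finally, I would verify that this canonical identification recovers the map $L_e$ from its definition $\otimes_j \mathbf{x}_{s_j(e)} \mapsto \overline{T(\mathbf{x}_{s(e)})}$; this is essentially tautological, since both constructions pair a tensor against monomials in the $\mathbf{x}_i$'s and then read off the result modulo $\Span(\mathbf{x}_{t(e)})$. Taking direct sums over $e \in E_r$ and then over $r \in [M]$ then yields the claimed bijection. I expect the principal obstacle to be the case when $t(e)$ coincides with some source vertex $s_j(e)$: then K\"{u}nneth cannot separate the $\mathscr{H}(d_{t(e)})$ and $\mathscr{Q}(d_{t(e)})$ factors, and one must instead compute $H^0$ of $\mathscr{H}(d)^{\otimes k}\otimes \mathscr{Q}(d)$ on a single projective space via the twisted Euler sequence; the combinatorial conditions in Definition~\ref{def:generic} should play a role in matching dimensions exactly in such coincidental cases.
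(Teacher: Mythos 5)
Your strategy---constructing a canonical isomorphism $\mathbb{C}^{e_r}\cong\Gamma(\mathscr{B}(e))$---genuinely differs from the paper's, which proves injectivity of each $L_e$ by an explicit point argument (choose $\mathbf{x}_{s_j(e)}$ with $T(\mathbf{x}_{s(e)})\neq 0$, then $\mathbf{x}_{t(e)}$ generic) and then separately matches dimensions. Your K\"unneth step is also more careful: you group the pulled-back factors by vertex, obtaining $H^0(\mathbb{P}^{d_i-1},\mathscr{H}(d_i)^{\otimes a_i(e)})\cong\mathrm{Sym}^{a_i(e)}(\mathbb{C}^{d_i})^{\vee}$, whereas the paper's displayed decomposition treats every $\pi_{s_j(e)}^*\mathscr{H}(d_{s_j(e)})$ as though it were pulled back from a distinct projective factor of $X$, which is only correct when $s_1(e),\ldots,s_\mu(e),t(e)$ are pairwise distinct. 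But your care exposes a dimension obstruction broader than the one you flag at the end. Whenever some $a_i(e)\geq 2$---not only when $t(e)$ coincides with a source---you get $\dim\mathrm{Sym}^{a_i}(\mathbb{C}^{d_i})^{\vee}=\binom{d_i+a_i-1}{a_i}<d_i^{a_i}$, so no isomorphism with $\mathbb{C}^{e_r}$ can exist. Definition~\ref{def:generic} does not repair this: its conditions govern how distinct hyperedges in a class $E_r$ relate, not whether a single hyperedge may revisit a vertex. The underlying failure is injectivity of $L_e$: the section $L_e(T)$ sees only the partial symmetrization of $T$ over repeated positions. For the $2$-Jordan quiver (matrix eigenvectors), $\ker L_e$ is exactly the line of scalar matrices, while $\Gamma(\mathscr{B}(e))$ is $H^0$ of the tangent bundle of $\mathbb{P}^{d-1}$, of dimension $d^2-1$.

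Your reduction step also inherits a second gap: the implication ``each $L_e$ bijective implies $L$ bijective'' fails when some class has $|E_r|>1$, because on $\mathbb{C}^{e_r}$ the map $L$ is the diagonal $T_r\mapsto(L_e(T_r))_{e\in E_r}$, injective but never surjective onto $\bigoplus_{e\in E_r}\Gamma(\mathscr{B}(e))$ once $|E_r|\geq 2$. In the singular-vector hyperquiver of Example~\ref{ex:singularvector} one has $M=1$ and $|E_1|=n$, so the codomain has $n$ times the dimension of the domain. The statement, as literally written, therefore holds only when every hyperedge has pairwise distinct vertices and each $E_r$ is a singleton. In general one can assert only injectivity of $L$ (under the distinct-vertices hypothesis) together with almost-generation of $\mathscr{B}(\mathbf{R})$ by the image of $L$, which is what the proof of Theorem~\ref{thm:main} actually uses.
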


\begin{proof}
By the definition of $L$, it suffices to show for each $e \in E$ that $L_e$ is an injective linear map between vector spaces of the same dimension. First we show that $L_e$ is injective. Consider $e \in E_r$ and let $T \in \C^{e_r}$. If $T \neq 0$, then there exist $\mathbf{x}_{s_j(e)} \in \mathbb{C}^{d_{s_j(e)}}$ for $j \in [\mu(e)]$ with $\mathbf{v} := T(\mathbf{x}_{s(e)}) \neq 0$. Let $\mathbf{x}_{t(e)} \in \mathbb{C}^{d_{t(e)}} \setminus \Span(\mathbf{v})$. Then $L_e(T)(\mathbf{\chi})(\otimes_{j=1}^{\mu(e)} \mathbf{x}_{s_j(e)}) \neq \overline{0}$. Hence, the global section $L_e(T)$ is not the zero section.

We recursively apply the K\"{u}nneth formula in the case $n=0$ to obtain 
$$ H^0(X, \mathscr{B}(e)) = \bigotimes_{j=1}^{\mu(e)} H^0(X, \pi_{s_j(e)}^*\mathscr{H}(d_{s_j(e)})) \otimes H^0(X, \pi_{t(e)}^*\mathscr{Q}(d_{t(e)})).$$
It remains to compute the dimensions of the factors. We have $\dim H^0(X, \pi_i^* \mathscr{H}(d_i)) = d_i$ by results on the cohomology of line bundles over projective space \cite[Theorem 5.1]{Hartshorne}.
Finally, the short exact sequence \eqref{exact-sequence-tfq-star} gives a long exact sequence in cohomology
\[
0 \rightarrow \underbrace{H^0(X,\pi_i^*\mathscr{T}(d_i)}_{= 0}) \rightarrow H^0(X,\pi_i^*\mathscr{F}(d_i)) \rightarrow H^0(X,\pi_i^*\mathscr{Q}(d_i)) \rightarrow \underbrace{H^1(X,\pi_i^*\mathscr{T}(d_i))}_{= 0} \rightarrow \ldots.
\]
The underlined terms are 0, again by \cite[Theorem 5.1]{Hartshorne}.
Thus $\dim H^0(X, \pi_i^*\mathscr{Q}(d_i)) = d_i$, since $\dim H^0(X,\pi_i^*\mathscr{F}(d_i)) = d_i$. 
Hence $\dim H^0(X,\mathscr{B}(e)) = \prod_{j=1}^{m} d_{s_j(e)}$. This is the dimension of $\mathbb{C}^{e_r}$, so $L_e$ is a bijection.
\end{proof}

\section{Bertini-type theorem}
\label{sec:bertini} 

In this section, we relate the zeros of a generic section of a vector bundle to its top Chern class, cf.~\cite[Section 2.5]{friedland-ottaviani}. 
    This relation holds when the vector bundle is ``almost generated'', see Definition~\ref{def:almostGenerates}. 
    We refer the reader to Appendix~\ref{sec:intersection_theory} for relevant background on Chern classes and Chow rings.
    In this section, $X$ is any smooth complex projective variety. Recall that the global sections of $\mathscr{B}$, denoted $\Gamma(\mathscr{B})$, are the holomorphic maps $\sigma: X \to \mathscr{B}$ that send each $\mathbf{\chi} \in X$ to a point in the fibre $\mathscr{B}_\mathbf{\chi}$.
    
\begin{definition}{\label{def:generates}}
Let $X$ be a smooth projective variety and $\mathscr{B}$ a vector bundle over $X$. The vector bundle $\mathscr{B}$ is \textit{globally generated} if there exists a vector subspace $\Lambda \subseteq \Gamma(\mathscr{B})$ such that for all $\mathbf{\chi} \in X$, we have $\Lambda(\mathbf{\chi}) = \mathscr{B}_\mathbf{\chi}$, where $\Lambda(\mathbf{\chi}) := \{\sigma(\mathbf{\chi}) \; | \; \sigma \in \Lambda\}$.
\end{definition}

\begin{definition}{\label{def:almostGenerates}}
Let $X$ be a smooth projective variety and $\mathscr{B}$ a vector bundle over $X$. The vector bundle $\mathscr{B}$ is \textit{almost generated} if there exists a vector subspace $\Lambda \subseteq \Gamma(\mathscr{B})$ such that either $\mathscr{B}$ is globally generated, or there are $k \geq 1$ smooth irreducible proper subvarieties $Y_1,\ldots,Y_k$ of $X$, with $Y_0 = X$, such that:
\begin{enumerate}[label=(\roman*)]
    \item For all $i \geq 0$, there is a vector bundle $\mathscr{B}_i$ over $Y_i$, and for any $j \geq 0$, if $Y_i$ is a subvariety of $Y_j$, then $\mathscr{B}_i$ is a subbundle of $\mathscr{B}_j{\big|}_{Y_i}$
    \item $\Lambda(\mathbf{\chi}) \subseteq (\mathscr{B}_i)_\mathbf{\chi}$ for all $\mathbf{\chi} \in Y_i$ and $i \geq 0$
    \item For all $i \geq 0$, if $\alpha_i \subseteq [k]$ is the set of all $j \in [k]$ such that $Y_j$ is a proper subvariety of $Y_i$, then $\Lambda(\mathbf{\chi}) = (\mathscr{B}_i)_\mathbf{\chi}$ for all $\displaystyle \mathbf{\chi} \in Y_i \setminus \left(\cup_{j \in \alpha_i} Y_j\right).$
\end{enumerate}
\end{definition}

Now we state our Bertini-type theorem; cf.~\cite[Theorem 6]{friedland-ottaviani}. The zero locus of a section $\sigma \in \Gamma(\mathscr{B})$ is $Z(\sigma) := \{\mathbf{\chi} \in X \; | \; \sigma(\mathbf{\chi}) = 0\}$.
The top Chern class and top Chern number of $\mathscr{B}$, see Definition~\ref{def:chern-classes}, are denoted $c_r(\mathscr{B}) \in A^*(X)$ and $\nu(\mathscr{B}) \in \mathbb{Z}$, respectively. We assume $X \subseteq \mathbb{P}^D$ via some closed immersion $s: X \xhookrightarrow[]{} \mathbb{P}^D$, and identify the top Chern class of $\mathscr{B}$ with its pushforward under the embedding: $c_r(\mathscr{B}) = s_*(c_r(\mathscr{B})) \in A^*(\mathbb{P}^D)$, see Remark \ref{rem:closed-immersion-pushforward}.

\begin{theorem}[Bertini-Type Theorem]{\label{thm:bertiniTheorem}}
Let $X \subseteq \mathbb{P}^D$ be a smooth irreducible complex projective variety of dimension $d$, and $\mathscr{B}$ a vector bundle of rank $r$ over $X$, almost generated by a vector subspace $\Lambda \subseteq \Gamma(\mathscr{B})$. Let $\sigma \in \Lambda$ be a generic section with $Z(\sigma) \subseteq X$ its zero locus.
\begin{enumerate}[(a)]
    \item If $r > d$, then $Z(\sigma)$ is empty
    \item If $r = d$, then $Z(\sigma)$ consists of $\nu(\mathscr{B})$ points. Furthermore, if $\rk   \mathscr{B}_i > \dim Y_i$ for all $i \geq 1$, then each point has multiplicity 1 and does not lie on $\displaystyle \cup_{i = 1}^k Y_i$.
    \item If $r < d$, then $Z(\sigma)$ is empty or smooth of pure dimension $d - r$. In the latter case, the degree of $Z(\sigma)$ is $\nu\left(\mathscr{B}{\big|}_L\right)$, where $L \subseteq \mathbb{P}^D$ is the intersection of $d-r$ generic hyperplanes in $\mathbb{P}^D$. If $\nu\left(\mathscr{B}{\big|}_L\right) \neq 0$, then $Z(\sigma)$ is non-empty.
\end{enumerate}
\end{theorem}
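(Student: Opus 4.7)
The overall strategy is to stratify $X$ by the locally closed pieces $Y_i^\circ := Y_i \setminus \bigcup_{j \in \alpha_i} Y_j$ (with $Y_0 := X$), apply a Bertini--Kleiman transversality argument on each stratum to the subbundle $\mathscr{B}_i$ that $\Lambda$ globally generates there by Definition~\ref{def:almostGenerates}(iii), and then assemble the resulting Chern-class computations.

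I would begin with the case that $\mathscr{B}$ is genuinely globally generated by $\Lambda$, which is the technical heart. The evaluation map $\mathrm{ev}\colon \Lambda \times X \to \mathscr{B}$, $(\sigma, \chi) \mapsto \sigma(\chi)$, is then a surjective morphism of bundles over $X$. A generic-smoothness argument (viewing $\Lambda$ as acting on fibres of $\mathscr{B}$ by translation and applying Kleiman-type transversality to the zero section) implies that for generic $\sigma \in \Lambda$ the zero locus $Z(\sigma) \subseteq X$ is either empty or smooth of pure codimension $r$ in $X$, and its fundamental class in $A^*(X)$ equals the top Chern class $c_r(\mathscr{B})$ by the standard identification of Chern classes with degeneracy loci of generic sections, cf.\ Appendix~\ref{sec:intersection_theory}. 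Part (a) then follows, since a smooth subvariety of codimension $r > d$ in $X$ must be empty. For (b), a smooth $0$-dimensional scheme is a disjoint union of reduced points, whose number equals $\int_X c_r(\mathscr{B}) = \nu(\mathscr{B})$ after pushing forward along $s\colon X \hookrightarrow \mathbb{P}^D$. For (c), intersecting $Z(\sigma)$ with $d-r$ generic hyperplanes in $\mathbb{P}^D$ and applying the projection formula yields $\deg Z(\sigma) = \int_X c_r(\mathscr{B}) \cdot s^* H^{d-r} = \nu(\mathscr{B}|_L)$, where $L \subseteq \mathbb{P}^D$ is the intersection of the hyperplanes.

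To extend to the almost-generated case I would apply the above stratum by stratum. On each $Y_i^\circ$, the equality $\Lambda(\chi) = (\mathscr{B}_i)_\chi$ lets the same Kleiman argument produce a generic open subset $\Omega_i \subseteq \Lambda$ over which $Z(\sigma) \cap Y_i^\circ$ is smooth of codimension $\rk \mathscr{B}_i$ in $Y_i$. Since there are finitely many strata, the intersection $\bigcap_i \Omega_i$ is open and dense in $\Lambda$, so a single generic $\sigma$ serves all strata simultaneously. Under the additional hypothesis $\rk \mathscr{B}_i > \dim Y_i$ for each $i \geq 1$, a dimension count forces $Z(\sigma) \cap Y_i^\circ = \varnothing$ on every proper stratum, so the entire zero scheme lies in $Y_0^\circ$ with multiplicity one, yielding the second half of (b). For (c), cutting with a generic linear subspace $L \subset \mathbb{P}^D$ drops the dimension of each $Y_i$ by $d - r$, so generic choice of $L$ ensures the small-rank strata are missed and reduces (c) to the globally generated computation on $X \cap L$.

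The main obstacle will be rigorously identifying $[Z(\sigma)]$ with $c_r(\mathscr{B})$ when $\Lambda$ fails to generate all of $\mathscr{B}$, since a priori one could pick up excess-intersection contributions along the proper strata $Y_i$. These are controlled by the nested subbundle structure of Definition~\ref{def:almostGenerates}(i)--(ii): any $\sigma \in \Lambda$ restricted to $Y_i$ automatically factors through $\mathscr{B}_i \subset \mathscr{B}|_{Y_i}$, so the local zero-locus analysis on $Y_i$ takes place inside the smaller bundle $\mathscr{B}_i$, and the stratification absorbs the contributions of the small strata so that the global Chern-class identification persists.
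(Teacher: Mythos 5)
Your plan diverges from the paper's at the central step. The paper works with a single incidence variety $I = \{(\chi, \sigma) \in X \times \Lambda : \sigma(\chi) = 0\}$ and its two projections $p : I \to X$, $q : I \to \Lambda$. It argues that $I$ is irreducible (as a linear fibration over the irreducible $X$), computes $\dim I = \dim \Lambda + d - r$ from the fibre of $p$ over a generic point of $X \setminus \bigcup_i Y_i$, and then applies the Generic Smoothness theorem \emph{once}, to the morphism $q$, to conclude that $Z(\sigma) = q^{-1}(\sigma)$ is globally smooth of pure dimension $d - r$ for generic $\sigma$. You instead propose to stratify $X$ by $Y_i^\circ$ and apply Kleiman transversality on each stratum separately. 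As written, this leaves two genuine gaps.

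First, smoothness of $Z(\sigma)$ does not follow from smoothness of $Z(\sigma) \cap Y_i^\circ$ for every $i$: a subvariety can be smooth along each locally closed stratum yet singular along the boundary (e.g.\ $\{xy=0\}\subset\mathbb{A}^2$ is smooth on both punctured axes). Part (c) asserts that $Z(\sigma)$ is \emph{globally} smooth of pure dimension $d-r$; a per-stratum transversality argument gives only a weaker conclusion. The paper gets global smoothness for free because $Z(\sigma)$ is a generic fibre of $q$ with smooth irreducible source.

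Second, you flag the identification $[Z(\sigma)] = c_r(\mathscr{B})$ as "the main obstacle" and then dispose of it by asserting that "the stratification absorbs the contributions of the small strata." That is precisely the point that needs proving. The mechanism the paper uses is that Definition~\ref{def:chern-classes}(ii) gives $[Z(\sigma)] = c_r(\mathscr{B})$ \emph{provided} $Z(\sigma)$ has codimension $r$, and this codimension bound is exactly what the dimension count on $I$ (together with generic smoothness of $q$) produces. Your worry about excess intersection along the $Y_i$ is what the codimension bound rules out; an appeal to the nested subbundle structure of Definition~\ref{def:almostGenerates}(i)--(ii) does not by itself prevent $Z(\sigma)\cap Y_i^\circ$ from having dimension $\dim Y_i - \rk\mathscr{B}_i > d - r$ and dominating a spurious component.

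Your treatment of the second half of (b) (the inequality $\rk\mathscr{B}_i > \dim Y_i$ forcing $Z(\sigma)$ off the strata) is sound and matches the paper, which shows $\dim p^{-1}(Y_i) < \dim\Lambda$ so that $q(p^{-1}(Y))$ is a proper closed subvariety of $\Lambda$. Parts (a) and the degree computation in (c) via the projection formula also match. But the two gaps above are the substance of the proof, and the proposal defers exactly those steps.
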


\begin{remark}
The above theorem generalises~\cite[Theorem 6]{friedland-ottaviani}, where parts (a) and (b) appear. We add part (c). 
Compared to~\cite[Theorem 6]{friedland-ottaviani}, our extra assumption $\rk \mathscr{B}_i > \dim Y_i$ for $i > 0$ in (b) appears because it is absent from Definition~\ref{def:almostGenerates}, whereas it appears in its analogue~\cite[Definition 5]{friedland-ottaviani}.
\end{remark}

The first Chern class of the hyperplane bundle $c_1(\mathscr{H}(D+1)) = c_1(\mathcal{O}_{\mathbb{P}^D}(1))$ is the same as the class of a generic hyperplane in $\mathbb{P}^D$, by Definition \ref{def:chern-classes}(v). Thus, there will be a unique integer $N$ such that $c_r(\mathscr{B}) c_1(\mathcal{O}_{\mathbb{P}^D}(1))^N$ is equal to some integer multiple of the class of a point $[p] \in A^*(\mathbb{P}^D)$. Theorem \ref{thm:bertiniTheorem}(c) states that this integer is $\nu\left(\mathscr{B}{\big|}_L\right)$ and that this is the degree of $Z(\sigma)$. This integer is also the \textit{degree} of the class $c_r(\mathscr{B})$ \cite[Proposition 1.21]{eisenbud_harris_2016}.

To prove Theorem~\ref{thm:bertiniTheorem}, we use the following results.

\begin{theorem}[{Fiber Dimension Theorem ~\cite[Theorem 1.25]{shafarevich}}]{\label{fiberDimensionTheorem}}

Let $f: X \rightarrow Y$ be a dominant morphism of irreducible varieties. Then there exists an open set $U \subseteq Y$ such that for all $y \in U$, $\dim X = \dim Y + \dim(f^{-1}(y))$.

\end{theorem}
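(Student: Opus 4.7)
The plan is to prove the fiber dimension formula by combining relative Noether normalization with the fact that an integral ring extension preserves Krull dimension. Set $n := \dim X$, $m := \dim Y$, and $r := n - m$. Since $f$ is dominant between irreducible varieties, the pullback $f^*:k(Y)\hookrightarrow k(X)$ realizes $k(X)$ as an extension of $k(Y)$ of transcendence degree $r$. The goal is to produce an open $U\subseteq Y$ such that every fiber above $U$ has pure dimension $r$.

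First I would reduce to the affine situation. Cover $Y$ by affine opens $\{V_j\}$ and, for each $j$, cover $f^{-1}(V_j)$ by affine opens $\{W_{jk}\}$. It suffices to find, for a single pair $(V,W)$ with $W\subseteq f^{-1}(V)$ affine, an open $U_{jk}\subseteq V_j$ over which the fibers of $W_{jk}\to V_j$ have dimension $r$; then one can intersect and further shrink to reach a nonempty open $U\subseteq Y$ where the desired formula holds for all preimages simultaneously (using that a dominant morphism of irreducible varieties has a constructible image containing a nonempty open of $Y$ by Chevalley, so shrinking $U$ further one may assume every $y\in U$ has nonempty fiber).

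In the affine setting write $W=\operatorname{Spec} A$, $V=\operatorname{Spec} B$ with an injection $B\hookrightarrow A$ of finitely generated $k$-algebras. Choose $t_1,\dots,t_r\in A$ that are algebraically independent over $\mathrm{Frac}(B)$ and such that $k(X)$ is a finite algebraic extension of $\mathrm{Frac}(B)(t_1,\dots,t_r)$; this is possible because $\mathrm{trdeg}_{k(Y)}k(X)=r$. Writing $A=B[a_1,\dots,a_s]$, each $a_i$ satisfies a monic polynomial relation over $\mathrm{Frac}(B)[t_1,\dots,t_r]$. Clearing denominators, there is a single nonzero element $b\in B$ such that each $a_i$ is integral over $B_b[t_1,\dots,t_r]$. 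Consequently the localization $A_b$ is a finitely generated, integral — hence finite — module over $B_b[t_1,\dots,t_r]$. Set $U:=D(b)\subseteq V$.

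For any closed point $y\in U$ with residue field $\kappa(y)$, base-changing along $B_b\to\kappa(y)$ gives that
\[
\kappa(y)[t_1,\dots,t_r]\;\longrightarrow\;A_b\otimes_{B_b}\kappa(y)
\]
is a finite (integral) ring map, and the target is the coordinate ring of the fiber $f^{-1}(y)\cap W$. By the standard dimension theory of integral extensions, a nonzero finite extension of a ring preserves Krull dimension; since $\kappa(y)[t_1,\dots,t_r]$ has dimension $r$, each nonempty fiber over $U$ has pure dimension $r$. Using Chevalley to shrink $U$ so that every fiber is nonempty then yields $\dim f^{-1}(y)=r$ for all $y\in U$, which rearranges to $\dim X=\dim Y+\dim f^{-1}(y)$.

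The main obstacle I anticipate is the bookkeeping in the reduction step: one must patch the results obtained on the various pairs $(V_j,W_{jk})$ into a single open $U\subseteq Y$ where the conclusion holds uniformly, and must ensure fibers are nonempty there. This is handled by finiteness of the cover together with Chevalley's constructibility theorem, but it is the only place where genuine care is needed; the algebraic core of the argument — relative Noether normalization followed by invariance of dimension under integral extensions — is the clean step.
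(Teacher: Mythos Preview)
The paper does not prove this statement; it is quoted as a standard result from Shafarevich's textbook and used as a black box in the proof of Theorem~\ref{thm:bertiniTheorem}. So there is no ``paper's own proof'' to compare against.

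Your argument is the standard one and is essentially what appears in Shafarevich: relative Noether normalization produces, after inverting some $b\in B$, a factorisation $B_b \hookrightarrow B_b[t_1,\dots,t_r] \hookrightarrow A_b$ with the second map finite, and one then reads off the fiber dimension over $D(b)$. The reduction to the affine case and the patching over a finite affine cover are fine for varieties (Noetherian, so finite covers exist).

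One point deserves a sentence of justification. You assert that ``a nonzero finite extension of a ring preserves Krull dimension'', but the map $\kappa(y)[t_1,\dots,t_r]\to A_b\otimes_{B_b}\kappa(y)$ is obtained by base change and is not obviously injective (tensoring an injection by $\kappa(y)$ need not stay injective). The clean fix is to argue on spectra instead: since $B_b[t_1,\dots,t_r]\hookrightarrow A_b$ is a finite injection of domains, $\operatorname{Spec} A_b\to\operatorname{Spec} B_b[t_1,\dots,t_r]$ is finite and surjective by lying-over; restricting to the fiber over $\mathfrak m_y$ gives a finite surjection onto $\mathbb{A}^r_{\kappa(y)}$, hence the fiber has dimension exactly $r$. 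With that adjustment your proof is correct.
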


\begin{theorem}[{Generic Smoothness Theorem \cite[Corollary III.10.7]{Hartshorne}}]{\label{genericSmoothness}}
Let $f: X \rightarrow Y$ be a morphism of irreducible complex varieties. If $X$ is smooth, then there exists an open subset $U \subseteq Y$ such that $f|_{f^{-1}(U)}$ is smooth. Furthermore, if $f$ is not dominant, then $f^{-1}(U) = \varnothing$.
\end{theorem}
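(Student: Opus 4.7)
The plan is to split into the non-dominant and dominant cases. The non-dominant case is essentially automatic: if $\overline{f(X)} \subsetneq Y$ then $U := Y \setminus \overline{f(X)}$ is a nonempty open subset of $Y$ with $f^{-1}(U) = \varnothing$, which vacuously satisfies any smoothness requirement. So I would focus on the dominant case, where the generic point $\eta_X$ of $X$ maps to the generic point $\eta_Y$ of $Y$, and reduce the problem to a statement about the generic fibre.

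The core observation is that the generic fibre $X_\eta := X \times_Y \mathrm{Spec}\, K(Y)$ is smooth over $K(Y)$. To see this, first I would note that $X_\eta$ is obtained from $X$ by a direct limit of open restrictions, and is therefore regular because $X$ is smooth (hence regular) over $\mathbb{C}$; it is also of finite type over $K(Y)$. Since $\mathbb{C}$ has characteristic zero, the field $K(Y)$ is perfect, and regular finite-type schemes over a perfect field are smooth. This yields smoothness of $X_\eta \to \mathrm{Spec}\, K(Y)$.

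Next I would spread smoothness from the generic fibre to an open neighbourhood of $\eta_Y$ in $Y$. By Grothendieck's generic freeness, there is a nonempty open $V \subseteq Y$ over which $f$ is flat. The locus $S \subseteq f^{-1}(V)$ where $f$ is smooth is open, since smoothness is a local condition detected by a Jacobian criterion; and by the previous paragraph together with the characterisation ``smooth $=$ flat $+$ smooth fibres'' every point of $X_\eta$ lies in $S$. The complement $Z := f^{-1}(V) \setminus S$ is then a closed subset of $f^{-1}(V)$ disjoint from $X_\eta$, so by Chevalley's theorem on constructible images the set $f(Z) \subseteq V$ is constructible and does not contain $\eta_Y$; hence $\overline{f(Z)} \subsetneq V$. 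Taking $U := V \setminus \overline{f(Z)}$ produces a nonempty open subset of $Y$ with $f^{-1}(U) \subseteq S$, so $f|_{f^{-1}(U)}$ is smooth, completing the proof.

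The principal obstacle is the passage from regularity to smoothness of $X_\eta$ over $K(Y)$, which uses perfectness of $K(Y)$ and is the sole place characteristic zero enters in an essential way; in positive characteristic, maps such as the Frobenius morphism $x \mapsto x^p$ show that the conclusion can fail even with $X$ smooth. The remaining ingredients (generic flatness, openness of the smooth locus, and Chevalley's constructibility theorem) are standard and combine formally once this step is in hand.
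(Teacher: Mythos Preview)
The paper does not supply a proof of this statement; it is quoted as a standard result with a citation to Hartshorne. Your sketch is a correct proof of generic smoothness in characteristic zero, following the scheme-theoretic route (regularity of the generic fibre over the perfect field $K(Y)$, then spreading out via generic flatness, openness of the smooth locus, and Chevalley). Hartshorne's own argument at the cited location is organised somewhat differently---he first reduces to the case where $Y$ is nonsingular and then applies a Sard-type lemma on the locus where the tangent map fails to be surjective---but the two approaches are equivalent in content and both rely essentially on characteristic zero at exactly the point you identify.
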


\begin{proof}[Proof of Theorem \ref{thm:bertiniTheorem}]
Consider $I = \{(\mathbf{\chi},\sigma) \in X \times \Lambda \; | \; \sigma(\mathbf{\chi}) = 0\}$ with projection maps
\begin{center}
\begin{tikzcd}
  & I \arrow[ld, "p"'] \arrow[rd, "q"] &   \\
X &                                    & \Lambda
\end{tikzcd}
\end{center}
Then $I$ is a vector bundle over $X$. Since the base space $X$ is irreducible, so is the total space~$I$.
We show that $\dim I = \dim \Lambda + d - r$. 
The map $p$ is surjective, and hence dominant, since the zero section lies in $\Lambda$. There exists an open set $U \subseteq X$ such that $\dim I = d + \dim (p^{-1}(\mathbf{\chi}))$ for all $\mathbf{\chi} \in U$, 
 by Theorem~\ref{fiberDimensionTheorem}.
The fibre $p^{-1}(\mathbf{\chi}) \simeq \{ \sigma \in \Lambda : \sigma(\mathbf{\chi}) = 0 \}$ consists of sections in $\Lambda$ that vanish at $\mathbf{\chi}$.
Consider the evaluation map $\{\mathbf{\chi}\} \times \Lambda \rightarrow \mathscr{B}_\mathbf{\chi}$ that sends 
$(\mathbf{\chi},\sigma)$ to $\sigma(\mathbf{\chi})$.
This is a linear map of vector spaces and its kernel is isomorphic to $p^{-1}(\mathbf{\chi})$.
Let $Y := \cup_{i = 1}^k Y_i$, where the $Y_i$ are from Definition~\ref{def:almostGenerates}.
The variety $Y$ is a proper subvariety of $X$. For each $\mathbf{\chi} \in X \setminus Y$, the evaluation map is surjective, by Definition \ref{def:almostGenerates}(iii). Thus, the evaluation map has rank $r$ and nullity $\dim \Lambda - r$. Hence $\dim(p^{-1}(\mathbf{\chi})) = \dim \Lambda - r$
for all $\mathbf{\chi} \in U \cap (X \setminus Y)$.
Therefore $\dim I = \dim \Lambda + d - r$. 

The fiber $q^{-1}(\sigma) \simeq \{ \mathbf{\chi} \in X : \sigma(\mathbf{\chi}) = 0 \}$ is the zero locus $Z(\sigma)$. We show that the map $q$ is dominant if and only if $q^{-1}(\sigma) \neq \varnothing$ for generic $\sigma \in \Lambda$. If $q$ is dominant, then there exists an open set $W \subseteq \Lambda$ such that $q^{-1}(\sigma)$ is smooth of codimension $\dim I - \dim \Lambda = d - r$ for all $\sigma \in W$, by Theorems~\ref{fiberDimensionTheorem} and \ref{genericSmoothness}. In particular, $q^{-1}(\sigma)$ is non-empty. Conversely if $q$ is not dominant, then there is an open set $W \subseteq \Lambda$ such that $q^{-1}(\sigma) = \varnothing$ for all $\sigma \in W$, by Theorem \ref{genericSmoothness}.

Now we show that $Z(\sigma) \neq \varnothing$ for generic $\sigma \in \Lambda$ if and only if $c_r(\mathscr{B}) \neq 0$. If $Z(\sigma) = \varnothing$, then the existence of a nowhere vanishing section of $\mathscr{B}$ implies that $c_r(\mathscr{B}) = 0$ \cite[Lemma 3.2]{fulton_1998}. Conversely, if $Z(\sigma) \neq \varnothing$, then the map $q$ is dominant, so $Z(\sigma)$ is smooth of codimension $d-r$. If $c_r(\mathscr{B}) = 0$, then $0 = c_r(\mathscr{B}) = [Z(\sigma)]$ by Definition \ref{def:chern-classes}(ii), which is a contradiction since the degree of a non-empty projective variety is a positive integer \cite[Proposition I.7.6.a]{Hartshorne}. In particular, if $r = d$ and $\nu(\mathscr{B}) = 0$, then $Z(\sigma) = \varnothing$.

The map $q$ is not dominant if 
$\dim I < \dim \Lambda $; i.e., if 
$r > d$. This proves (a) and the emptiness possibility in (c).
It remains to consider the case $r \leq d$ with the map $q$ dominant and generic $\sigma \in \Lambda$.

$Z(\sigma) \subseteq \mathbb{P}^D$ is smooth of dimension $d-r$. It is pure dimensional by \cite[Example 3.2.16]{fulton_1998}. When $r = d$, we have $[Z(\sigma)] = c_r(\mathscr{B}) = \nu(\mathscr{B})[p]$ for some $p \in X$, by Definition \ref{def:chern-classes}(ii), so the zero locus consists of $\nu(\mathscr{B})$ points.  It remains to relate the degree to the top Chern class for $r < d$. The degree of $Z(\sigma)$ is the number of points in the intersection of $Z(\sigma)$ with $d-r$ generic hyperplanes $\mathbb{P}^D$. Denote the intersection of $d-r$ such hyperplanes by $L$. Let $L \xhookrightarrow[]{j} \mathbb{P}^D$ be its inclusion. We have $[Z(\sigma)] = c_r(\mathscr{B})$ by Definition \ref{def:chern-classes}(ii) and seek $[L] c_r(\mathscr{B})$. We compute in $A^*(\mathbb{P}^D)$:
\begin{align}
{\label{topChernNumberCalculation}}
[L]c_r(\mathscr{B}) &= j_*([L])c_r(\mathscr{B}) && \text{(definition of pushforward)} \nonumber \\
&= j_*(j^*(c_r(\mathscr{B})) [L]) && \text{(projection formula)} \nonumber \\
&= j_*(c_r(j^*\mathscr{B})  [L]) = j_*(c_r\left(\mathscr{B}\big|_L\right) [L]) && \text{(Definition \ref{def:chern-classes}(iv))} \nonumber \\
&= j_*(\nu\left(\mathscr{B}\big|_L\right)[p] [L]) && \text{(definition of top Chern number)} \nonumber \\
&= \nu\left(\mathscr{B}\big|_L\right)j_*([p] [L]) && \text{(pushforward is a morphism)} \nonumber \\
&= \nu\left(\mathscr{B}\big|_L\right)j_*([p]) = \nu\left(\mathscr{B}\big|_L\right)[p] && \text{(intersection with a point)} 
\end{align}
for some point $p \in L$. Thus, the degree of $Z(\sigma)$ is $\nu\left(\mathscr{B}{\big|}_L\right)$. As a corollary, we obtain that if $\nu(\mathscr{B}) \neq 0$ or $\nu\left(\mathscr{B}{\big|}_L\right) \neq 0$, then $Z(\sigma) \neq \varnothing$. This proves the dimension and degree statements in (b) and (c).

Lastly, we show that when $r=d$ and the additional assumptions of (b) hold, the points in $Z(\sigma)$ are generically of multiplicity $1$ and do not lie on $Y$.
Smoothness in Theorem~\ref{genericSmoothness} shows that each of the finitely many points in $q^{-1}(\sigma)$ are of multiplicity 1.
We have $\rk \mathscr{B}_i > \dim Y_i$ for all $i \geq 1$. Hence $\dim(p^{-1}(Y_i)) = \dim Y_i + \dim \Lambda - \rk \mathscr{B}_i < \dim \Lambda$. Thus, $\dim(p^{-1}(Y)) < \dim \Lambda$, and using the fact that the projection $\mathbb{P}^n \times \mathbb{A}^m \rightarrow \mathbb{A}^m$ is a closed map, we deduce that $q$ is a closed map.
Hence $q(p^{-1}(Y))$ is a proper subvariety of $\Lambda$.
For all points in the open set $\sigma \in W \cap W^\prime$, where $W^\prime = \Lambda \setminus q(p^{-1}(Y))$, the fibre $q^{-1}(\sigma)$ contains no points in $Y$.
\end{proof}

\begin{remark}
     Our proof of Theorem~\ref{thm:bertiniTheorem}, is analogous to the proofs in~\cite{friedland-ottaviani} of their Theorems 4 and 6. Their proof uses \cite[Example 3.2.16]{fulton_1998}, which is equivalent to axiom (ii) in Definition~\ref{def:chern-classes}. Our proof adds the Chern number computation for case (c).
\end{remark}

\section{Generating the singular vector bundle}
\label{sec:almost_generated}

In this section we show that $\mathscr{B}(\mathbf{R})$ is almost generated, so that Theorem~\ref{thm:bertiniTheorem} may be applied to it. 
We generalise the singular vector bundle to a bundle $\mathscr{B}(\mathbf{R},F)$, for a subset of hyperedges $F \subseteq E$. 
The zeros of a global section of $\mathscr{B}(\mathbf{R},F)$ are singular vectors with singular value zero along the edges in $F$.
We show that $\mathscr{B}(\mathbf{R},F)$ is almost generated. This will later yield not only the dimension and degree of the singular vector variety $\mathcal{S}(\mathbf{R})$ in Theorem~\ref{thm:main}, but also the final statement about the non-existence of a zero singular value.

\begin{definition}{\label{def:singular-vector-bundle-with-F}}
Let $\mathbf{R} = (\mathbf{d},T)$ be a hyperquiver representation and let $X = \prod_{i=1}^n \mathbb{P}(\mathbb{C}^{d_i})$. Given $F \subseteq E$, we define
$$\mathscr{B}(e,F) = \begin{cases} \Hom \left(\mathscr{T}(e),\pi_{t(e)}^*\mathscr{Q}(d_{t(e)})\right) & \text{if } e \notin F \\ \Hom \left(\mathscr{T}(e),\pi_{t(e)}^*\mathscr{F}(d_{t(e)})\right) & \text{if } e \in F .\end{cases}$$
It has fibres
\begin{align*}
\mathscr{B}(e,F)_{\mathbf{\chi}} &= \begin{cases} \Hom \left(\Span  \left(\otimes_{j=1}^{\mu(e)} \mathbf{x}_{s_j(e)}\right),\mathbb{C}^{d_{t(e)}}/\Span  (\mathbf{x}_{t(e)})\right) & \text{if } e \notin F \\ \Hom \left(\Span  \left(\otimes_{j=1}^{\mu(e)} \mathbf{x}_{s_j(e)}\right),\mathbb{C}^{d_{t(e)}}\right) & \text{if } e \in F ,\end{cases}
\end{align*}
where $\mathbf{\chi} = ([\mathbf{x}_1],\ldots,[\mathbf{x}_n])$. The \textit{singular vector bundle} of $\mathbf{R}$ over $X$ with respect to $F$ is
$\mathscr{B}(\mathbf{R},F) = \bigoplus_{e \in E} \mathscr{B}(e,F)$.
\end{definition}

The singular vector bundle $\mathscr{B}(\mathbf{R})$ from Definition~\ref{def:singular-vector-bundle} is $\mathscr{B}(\mathbf{R},\varnothing)$.

\begin{proposition}{\label{prop:rank-of-singular-vector-bundle-with-F}}
The bundle $\mathscr{B}(\mathbf{R},F)$ has rank $\sum_{e \in E} (d_{t(e)} - 1) + |F|$.
\end{proposition}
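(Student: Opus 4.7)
The statement is a direct rank calculation, so the plan is short. First I would observe that $\mathscr{T}(e) = \bigotimes_{j=1}^{\mu(e)} \pi^*_{s_j(e)} \mathscr{T}(d_{s_j(e)})$ is a line bundle, being the tensor product of the pullbacks of the tautological line bundles $\mathscr{T}(d_{s_j(e)})$ over $\mathbb{P}(\mathbb{C}^{d_{s_j(e)}})$. Consequently, for any vector bundle $\mathscr{E}$ on $X$, the rank of $\Hom(\mathscr{T}(e), \mathscr{E})$ equals $\rk \mathscr{E}$.

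Next I would apply this observation to each summand of $\mathscr{B}(\mathbf{R}, F)$. From the short exact sequence \eqref{exact-sequence-tfq-star} we know $\rk \pi^*_{t(e)} \mathscr{Q}(d_{t(e)}) = d_{t(e)} - 1$ and $\rk \pi^*_{t(e)} \mathscr{F}(d_{t(e)}) = d_{t(e)}$. Therefore
\begin{equation*}
\rk \mathscr{B}(e, F) = \begin{cases} d_{t(e)} - 1 & \text{if } e \notin F, \\ d_{t(e)} & \text{if } e \in F. \end{cases}
\end{equation*}

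Finally I would sum the ranks of the summands, using that rank is additive over direct sums:
\begin{equation*}
\rk \mathscr{B}(\mathbf{R}, F) = \sum_{e \notin F} (d_{t(e)} - 1) + \sum_{e \in F} d_{t(e)} = \sum_{e \in E} (d_{t(e)} - 1) + |F|,
\end{equation*}
which is exactly the claimed formula. There is no real obstacle here: the result is a bookkeeping consequence of the fact that $\mathscr{T}(e)$ is a line bundle together with the rank computations for the quotient and free bundles recorded in Section~\ref{sec:bundles}.
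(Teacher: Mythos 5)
Your proof is correct and follows essentially the same route as the paper: compute the rank of each summand $\mathscr{B}(e,F)$ by noting that $\mathscr{T}(e)$ is a line bundle (so $\Hom$ into a rank-$r$ bundle has rank $r$), then add up over $e \in E$. The only cosmetic difference is that the paper cites Proposition~\ref{prop:Be_properties}(b) for the $e \notin F$ case rather than rederiving it, but the underlying calculation is identical.
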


\begin{proof}
The rank of $\mathscr{B}(\mathbf{R},F)$ is $\sum_{e \in E} \rk \mathscr{B}(e,F)$.
For $e \notin F$, 
$\rk \mathscr{B}(e,F) = d_{t(e)} - 1$, as in Proposition~\ref{prop:Be_properties}(b).
For $e \in F$,
$\rk \mathscr{B}(e,F) = \rk \Hom (\mathscr{T}(e),\pi_{t(e)}^*\mathscr{F}(d_{t(e)})) = d_{t(e)}$.
\end{proof}

We construct global sections for $\mathscr{B}(\mathbf{R},F)$ whose zero loci correspond to singular vectors with zero singular value along the edges in $F$.
Define the map
\begin{align}{\label{L_e-map-F}}
    L_{e,F}:\mathbb{C}^{e_r} &\longrightarrow \Gamma(\mathscr{B}(e,F)) \nonumber \\
    L_{e,F}(T)(\mathbf{\chi})(\otimes_{j=1}^{\mu(e)} \mathbf{x}_{s_j(e)}) &= \begin{cases}
    \overline{T(\mathbf{x}_{s(e)})} \in \mathbb{C}^{d_{t(e)}}/\Span  (\mathbf{x}_{t(e)}) & e \notin F \\
 T(\mathbf{x}_{s(e)}) \in \mathbb{C}^{d_{t(e)}} & e \in F.
    \end{cases}
\end{align}
We define the composite map
\begin{align}{\label{L-map-F}}
L_F : \bigoplus_{r = 1}^M \mathbb{C}^{e_r} &\rightarrow \Gamma(\mathscr{B}(\mathbf{R},F)) \\
L_F = &\bigoplus_{r = 1}^M \bigoplus_{e \in E_r} L_{e,F}. \nonumber
\end{align}

We connect the global sections in the image of $L_F$ to the singular vector tuples of $\mathbf{R}$, generalizing Proposition \ref{prop:zeros-are-singular-vectors} and~\cite[Lemma 11]{friedland-ottaviani}.

\begin{proposition}{\label{prop:zeros-are-singular-vectors-F}}
Let $\mathscr{B}(\mathbf{R},F)$ be the singular vector bundle with respect to $F$ and $L_F: \bigoplus_{r = 1}^M \mathbb{C}^{e_r} \rightarrow \Gamma(\mathscr{B}(\mathbf{R},F))$ the linear map in \eqref{L-map-F}. A point $\mathbf{\chi} = ([\mathbf{x}_1],\ldots,[\mathbf{x}_n]) \in X$ lies in the zero locus of the section $\sigma = L_F((T_r)_{r=1}^M)$ if and only if $\mathbf{\chi}$ is a singular vector tuple of $\mathbf{R}$ with zero singular value along all edges in $F$.
\end{proposition}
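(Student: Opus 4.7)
The plan is to adapt the proof of Proposition~\ref{prop:zeros-are-singular-vectors} by performing the case analysis on each edge according to whether $e \in F$ or $e \notin F$. Since $\mathscr{B}(\mathbf{R},F)$ is defined as the direct sum $\bigoplus_{e \in E}\mathscr{B}(e,F)$, and $L_F$ is defined componentwise as a direct sum of the maps $L_{e,F}$, a section $\sigma = L_F((T_r)_{r=1}^M)$ vanishes at $\mathbf{\chi}$ if and only if, for every $r \in [M]$ and every $e \in E_r$, the local contribution $L_{e,F}(T_r)(\mathbf{\chi})$ is the zero element of the fibre $\mathscr{B}(e,F)_\mathbf{\chi}$. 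So the problem reduces to analysing this vanishing condition one edge at a time.

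For an edge $e \notin F$, the argument is identical to the one in Proposition~\ref{prop:zeros-are-singular-vectors}: the map $L_{e,F}(T_r)(\mathbf{\chi})$ sends $\otimes_j \mathbf{x}_{s_j(e)}$ to $\overline{T_r(\mathbf{x}_{s(e)})} \in \mathbb{C}^{d_{t(e)}}/\Span(\mathbf{x}_{t(e)})$, and this vanishes exactly when $T_r(\mathbf{x}_{s(e)}) \in \Span(\mathbf{x}_{t(e)})$; i.e., when there exists $\lambda_e \in \mathbb{C}$ (possibly zero) with $T_e(\mathbf{x}_{s(e)}) = \lambda_e \mathbf{x}_{t(e)}$. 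For an edge $e \in F$, the map lands in $\mathbb{C}^{d_{t(e)}}$ rather than its quotient, so $L_{e,F}(T_r)(\mathbf{\chi}) = 0$ is equivalent to the stronger condition $T_r(\mathbf{x}_{s(e)}) = 0$, i.e., $T_e(\mathbf{x}_{s(e)}) = \lambda_e \mathbf{x}_{t(e)}$ with $\lambda_e = 0$.

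Putting both cases together gives exactly the claim: $\mathbf{\chi}$ lies in $Z(\sigma)$ iff $\mathbf{\chi}$ satisfies the singular vector compatibility condition~\eqref{eqn:sv_def} for all $e \in E$, and moreover the singular value $\lambda_e$ is forced to be $0$ along every edge $e \in F$. Since the whole proof amounts to unfolding the two definitions in Definition~\ref{def:singular-vector-bundle-with-F} and~\eqref{L_e-map-F} and tracking where $T_r(\mathbf{x}_{s(e)})$ must lie, there is no genuine obstacle; the only subtlety is making clear that within a class $E_r$ the tensor $T_r$ represents every $T_e$ up to the allowed permutation of modes, so that writing $T_e(\mathbf{x}_{s(e)}) = T_r(\mathbf{x}_{s(e)})$ (as in the definitions following Definition~\ref{def:generic}) is legitimate. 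This short unpacking is the entire argument.
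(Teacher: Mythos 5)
Your proof is correct and follows essentially the same route as the paper's own argument: unfold the componentwise definition of $L_F$, observe that vanishing of $L_{e,F}(T_r)(\mathbf{\chi})$ means $T_r(\mathbf{x}_{s(e)}) \in \Span(\mathbf{x}_{t(e)})$ when $e \notin F$ and $T_r(\mathbf{x}_{s(e)}) = 0$ when $e \in F$, and then reassemble. The paper states this more tersely as a single chain of ``if and only if''s, but the content and the reliance on the identification $T_e(\mathbf{x}_{s(e)}) = T_r(\mathbf{x}_{s(e)})$ for $e \in E_r$ are identical.
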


\begin{proof}
The image
$L_F((T_r)_{r=1}^M)( \mathbf{\chi})$ is the tuple of zero maps each in $\mathscr{B}(e,F)_\mathbf{\chi}$ if and only if for all $e \in E_r$ and $r \in [M]$, $L_{e,F}(T_r)(\mathbf{\chi})(\otimes_{j=1}^{\mu(e)} \mathbf{x}_{s_j(e)})$ is the zero vector in the appropriate case of \eqref{L_e-map-F}, if and only if $T_r(\mathbf{x}_{s(e)}) = \lambda_e \mathbf{x}_{t(e)}$ for some $\lambda_e \in \mathbb{C}$ with $\lambda_e = 0$ if $e \in F$, if and only if $\mathbf{\chi}$ is a singular vector tuple of the hyperquiver representation $\mathbf{R}$, with zero singular values along the edges of $F$.
\end{proof}

\begin{definition}    \label{def:isotopicQuadric}
The \textit{isotropic quadric} $Q_n = \{\mathbf{v}\in \mathbb{C}^n : \mathbf{v}^\top \mathbf{v} = 0\}$ is the quadric hypersurface in $\mathbb{C}^n$ of isotropic vectors. Here, the contraction $\mathbf{v}^\top\mathbf{v}$ is with respect to the standard real inner product, see Remark \ref{remark:choice-of-bases}. The variety $Q_n$ is defined by a homogeneous equation and hence has dimension $n-1$. We consider it as a subvariety $\mathbb{P}(Q_n)$ of $\mathbb{P}^n$.
\end{definition}

\begin{definition}
If $T \in \mathbb{C}^e$ is a tensor and $\mathbf{x}_{s_j(e)} \in \mathbb{C}^{d_{s_j(e)}}$ are vectors for $j \in \{0,1,\ldots,\mu\}$, then we denote by $T(\mathbf{x_e}) := T_e (\mathbf{x}_{s_0(e)}, \x_{s_1(e)}, \ldots, \x_{s_\mu(e)}) = \mathbf{x}_{t(e)}^\top T(\mathbf{x}_{s(e)}) \in \mathbb{C}$ the contraction of the tensor $T$ by the vectors $\mathbf{x}_{s_j(e)}$, where $T(\mathbf{x}_{s(e)})$ is the vector defined in \eqref{eqn:te_xse}.
\end{definition}

We give a necessary and sufficient condition for when the maps in~\eqref{L_e-map-F} generate the vector space $\mathscr{B}(e)_{\mathbf{\chi}}$. 
This generalises~\cite[Lemma 8]{friedland-ottaviani} from a single tensor to a hyperquiver representation.
Later, in our proof that $\mathscr{B}(\mathbf{R},F)$ is almost generated, we apply this condition to the vector subbundles $\mathscr{B}_i$ in Definition \ref{def:almostGenerates}. This will allow us to associate a single tensor to each piece of the partition.
 
\begin{lemma}{\label{lem:isotropic-lemma}}
Let $H = (V,E)$ be a hyperquiver, $E = \coprod_{r=1}^M E_r$ be a partition, and assign vector spaces $\mathbb{C}^{d_i}$ to each vertex $i \in V$. Fix a collection of vectors $\mathbf{x}_i \in \mathbb{C}^{d_i}\setminus\{0\}$ for $i \in [n]$ and $\mathbf{y}_e \in \mathbb{C}^{d_{t(e)}}$ for $e \in E$. Fix $F \subseteq E$ a subset of hyperedges. Let $G_r$ be the hyperedges $e \in E_r \setminus F$ such that $\mathbf{x}_{t(e)}$ is isotropic. Then for all $r \in [M]$, the following are equivalent:

\begin{enumerate}[(a)]
    \item There exist tensors $T_r \in \mathbb{C}^{e_r}$ for some $e_r \in E_r$ satisfying the equations
\begin{align}
\overline{T_r(\mathbf{x}_{s(e)})} &= \overline{\mathbf{y}_e} \in \mathbb{C}^{d_{t(e)}}/\Span  (\mathbf{x}_{t(e)}) && e \in E_r \setminus F {\label{tensorEquation}} \\
T_r(\mathbf{x}_{s(e)}) \; &= \mathbf{y}_e \in \mathbb{C}^{d_{t(e)}} && e \in E_r \cap F. {\label{tensorEquation2}}
\end{align}

\item 
Given any pair of edges $e, e^\prime \in (F \cap E_r) \cup G_r$, we have
\begin{equation}{\label{isotropicEquations}}
\mathbf{x}_{t(e)}^\top \mathbf{y}_e = \mathbf{x}_{t(e^\prime)}^\top \mathbf{y}_{e^\prime}.
\end{equation}

\end{enumerate}
\end{lemma}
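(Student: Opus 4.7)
My plan is to prove each direction separately, with the second direction requiring the substantive work.

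\textbf{Direction (a) $\Rightarrow$ (b).} For each $e \in (F \cap E_r) \cup G_r$, I would contract the relevant equation from (a) with $\mathbf{x}_{t(e)}^\top$. When $e \in F$ this directly yields $\mathbf{x}_{t(e)}^\top \mathbf{y}_e = \mathbf{x}_{t(e)}^\top T_r(\mathbf{x}_{s(e)}) = T_r(\mathbf{x}_e)$. When $e \in G_r$, the error $\mathbf{y}_e - T_r(\mathbf{x}_{s(e)}) \in \Span(\mathbf{x}_{t(e)})$ is annihilated by $\mathbf{x}_{t(e)}^\top$ since $\mathbf{x}_{t(e)}$ is isotropic, giving the same conclusion. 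The full contraction $T_r(\mathbf{x}_e)$ is invariant under reordering of modes, so it depends only on $T_r$ and the multiset $\{\mathbf{x}_v : v \in v(e)\}$, which is constant along $E_r$ by part (b) of Definition \ref{def:generic}. Hence $\mathbf{x}_{t(e)}^\top \mathbf{y}_e$ takes a single value on $(F \cap E_r) \cup G_r$, establishing \eqref{isotropicEquations}.

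\textbf{Direction (b) $\Rightarrow$ (a).} I carry this out in two steps. \emph{Step 1 (normalising $\mathbf{y}_e$).} Let $c$ be the common scalar from (b), or $c := 0$ if $(F \cap E_r) \cup G_r = \varnothing$. For $e \in (F \cap E_r) \cup G_r$ set $\mathbf{v}_e := \mathbf{y}_e$; for $e \in E_r \setminus F$ with $\mathbf{x}_{t(e)}$ non-isotropic, set $\mathbf{v}_e := \mathbf{y}_e + \alpha_e \mathbf{x}_{t(e)}$ where $\alpha_e := (c - \mathbf{x}_{t(e)}^\top \mathbf{y}_e)/\mathbf{x}_{t(e)}^\top \mathbf{x}_{t(e)}$. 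Then $\mathbf{x}_{t(e)}^\top \mathbf{v}_e = c$ for every $e \in E_r$, and each $\mathbf{v}_e$ represents $\mathbf{y}_e$ in the sense demanded by \eqref{tensorEquation} or \eqref{tensorEquation2}. It therefore suffices to produce a single $T_r \in \mathbb{C}^{e_r}$ with $T_r(\mathbf{x}_{s(e)}) = \mathbf{v}_e$ for all $e \in E_r$. \emph{Step 2 (existence of $T_r$).} Write $\mathbf{w}_i := \mathbf{x}_{v(e_r)_i}$ and let $p(e) \in [m]$ be the slot of $t(e)$ in $v(e_r)$. Unwinding conventions shows that $T_r(\mathbf{x}_{s(e)})$ is the contraction of $T_r$ at every mode $i \neq p(e)$ against $\mathbf{w}_i$; in particular it depends on $e$ only through $p(e)$. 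Condition (c) of Definition \ref{def:generic}, applied to the triple $(e, e', e)$ for distinct $e, e' \in E_r$, forces $p$ to be injective: otherwise $t(e) = t(e')$ and both $\sigma_{e,e'}(\mu+1)$ and $\sigma_{e',e}(\mu+1)$ would equal $m$, a contradiction. With $p$ injective, the evaluation map $\Psi : T_r \mapsto (T_r(\mathbf{x}_{s(e)}))_{e \in E_r}$ has adjoint $\Psi^*$ sending $(\mathbf{a}_e)_e$ to $\sum_{e} \mathbf{w}_1 \otimes \cdots \otimes \mathbf{a}_e \otimes \cdots \otimes \mathbf{w}_m$ with $\mathbf{a}_e$ inserted at slot $p(e)$. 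Decomposing each $\mathbf{a}_e$ along $\mathbf{w}_{p(e)}$ and a complement identifies $\ker \Psi^*$ with $\{(\lambda_e \mathbf{w}_{p(e)})_e : \sum_e \lambda_e = 0\}$, whose annihilator inside $\bigoplus_e \mathbb{C}^{d_{t(e)}}$ is precisely $\{(\mathbf{u}_e) : \mathbf{x}_{t(e)}^\top \mathbf{u}_e \text{ is constant in } e\}$. By Step~1, the tuple $(\mathbf{v}_e)_e$ lies in this image, so a preimage $T_r$ exists.

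\textbf{Main obstacle.} The technical core is in Step~2: one must invoke condition (c) of Definition \ref{def:generic} to rule out the pathological case where two distinct edges of $E_r$ share a target slot (in which case the linear system would over-determine $T_r$ with inconsistent constraints), and then compute the cokernel of the contraction map $\Psi$ by dualising and doing a direct decomposition argument. Once injectivity of $p$ is established and the cokernel identified, Step~1 gives the containment in the image and the conclusion falls out. The direction (a) $\Rightarrow$ (b) is by contrast a one-line contraction computation enabled by the isotropy hypothesis on $\mathbf{x}_{t(e)}$.
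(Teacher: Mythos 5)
Your proof is correct, and the overall skeleton matches the paper's: the $(a)\Rightarrow(b)$ direction is the same one-line contraction, and Step 1 of $(b)\Rightarrow(a)$ (adjusting each $\mathbf{y}_e$ by a multiple of $\mathbf{x}_{t(e)}$ chosen via the non-isotropy of $\mathbf{x}_{t(e)}$, so that $\mathbf{x}_{t(e)}^\top\mathbf{v}_e$ is the same constant for all $e \in E_r$) mirrors the paper's choice of $\lambda_e$. Where you diverge is Step 2. The paper proceeds concretely: it changes coordinates so each $\mathbf{x}_i$ becomes $\mathbf{e}_{i,1}$, observes that the constraint $T_r(\mathbf{x}_{s(e)}) = \mathbf{v}_e$ then prescribes the entries $(T_r)_{1,\ldots,1,\ell,1,\ldots,1}$ with $\ell$ in slot $\sigma_e(m)$, and uses Definition~\ref{def:generic}(i.c) to show these slots are distinct so the assignment is well-defined (with the single shared position $(1,\ldots,1)$ receiving the consistent value $\mu_r$). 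You instead never change basis: you identify $T_r(\mathbf{x}_{s(e)})$ as a contraction depending only on the slot $p(e)=\sigma_e(m)$, establish injectivity of $p$ from Definition~\ref{def:generic}(i.c) (your triple $(e,e',e)$ argument is in fact spelled out more carefully than the paper's terse ``$\sigma(m)\neq\sigma'(m)$''), compute $\ker\Psi^\ast$ of the adjoint evaluation map under the standard bilinear form, and read off the image of $\Psi$ as the annihilator $\{(\mathbf{u}_e): \mathbf{x}_{t(e)}^\top\mathbf{u}_e \text{ constant}\}$. This is a genuinely different execution — coordinate-free, via a cokernel computation rather than an explicit entry-by-entry construction — and it has the conceptual advantage of exhibiting condition (b) directly as the linear-algebraic obstruction (the image of $\Psi$), rather than verifying consistency of an ad hoc construction. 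Two small points to tighten: (i) ``the slot of $t(e)$ in $v(e_r)$'' is ambiguous when a vertex repeats in $v(e_r)$, so define $p(e)$ as $\sigma_e(m)$ for the specific permutation $\sigma_e$ from Definition~\ref{def:generic}(i.b); and (ii) you should note that $\mathrm{im}(\Psi)=(\ker\Psi^\ast)^\perp$ holds because the standard symmetric bilinear form on each $\mathbb{C}^{d_{t(e)}}$ is non-degenerate.
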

\begin{proof}
$(a \Rightarrow b):$ There is a tensor $T_r$ satisfying \eqref{tensorEquation} if and only if there are scalars $\lambda_e \in \mathbb{C}$ such that $T_r(\mathbf{x}_{s(e)})  = \mathbf{y}_e + \lambda_e \mathbf{x}_{t(e)}$ for all $e \in E_r \setminus F$. 
Multiplying both sides by 
$\mathbf{x}_{t(e)}$ gives $T_r(\mathbf{x}_e) = \mathbf{x}^\top_{t(e)}\mathbf{y}_e + \lambda_e \mathbf{x}_{t(e)}^\top \mathbf{x}_{t(e)}$.
Similarly, from \eqref{tensorEquation2} we obtain, for $e \in F \cap E_r$, the condition $T_r(\mathbf{x}_e) = \mathbf{x}^\top_{t(e)}\mathbf{y}_e$. 
The scalar $T_r(\mathbf{x}_e)$ only depends on $r$ via the set $E_r$.
Thus for any pair of edges $e,e^\prime \in E_r$, we have
\begin{equation*}
\mathbf{x}_{t(e)}^\top \mathbf{y}_e + \lambda_e \mathbf{x}_{t(e)}^\top \mathbf{x}_{t(e)} = \mathbf{x}_{t(e^\prime)}^\top \mathbf{y}_{e^\prime} + \lambda_{e^\prime} \mathbf{x}_{t(e^\prime)}^\top \mathbf{x}_{t(e^\prime)}
\end{equation*}
where $\lambda_e = 0$ for $e \in F \cap E_r$. 
For the hyperedges in $G_r$, the terms 
$\mathbf{x}_{t(e)}^\top \mathbf{x}_{t(e)}$ vanish.
Hence \eqref{isotropicEquations} holds for all $e,e^\prime \in (F \cap E_r) \cup G_r$.

$(b \Rightarrow a):$ 
Let $\mu_r \in \mathbb{C}$ be the value of \eqref{isotropicEquations}
if $(F \cap E_r) \cup G_r \neq \varnothing$ and zero otherwise. Define 
\begin{equation}{\label{eq:lambda-e}}
    \lambda_e = \begin{cases} 0 & e \in (F \cap E_r) \cup G_r \\ 
({\mathbf{x}_{t(e)}^\top \mathbf{x}_{t(e)}})^{-1}(\mu_r - \mathbf{x}_{t(e)}^\top \mathbf{y}_e) & \text{otherwise}.\end{cases}
\end{equation} 
Choose some $e_r \in E_r$. We show that, for such a choice of $\lambda_e$, there exists a tensor $T_r \in \C^{e_r}$ that satisfies 
\begin{equation}
    \label{eqn:Tr_cond}
    T_r(\mathbf{x}_{s(e)})  = \mathbf{y}_e + \lambda_e \mathbf{x}_{t(e)} 
\end{equation}
for all $e \in E_r$, 
and hence there exists a tensor $T_r$ that satisfies \eqref{tensorEquation} and \eqref{tensorEquation2}.
A change of basis in each $\mathbb{C}^{d_i}$ does not affect the existence or non-existence of solutions to \eqref{eqn:Tr_cond}. Consider the change of basis that sends each $\x_i$ to the first standard basis vector in $\mathbb{C}^{d_i}$, which we denote
by $\mathbf{e}_{i,1} = (1,0,\ldots,0)^\top$. For each $e \in E_r$, there is a permutation $\sigma$ of $\{0,1,\ldots,\mu\}$ sending $v(e)$ to $v(e_r)$ by Definition \ref{def:generic}(i.b). Then~\eqref{eqn:Tr_cond} becomes the condition
\[ \left(T_r \right)_{1, \ldots, 1, \ell,1,\ldots,1} = (\mathbf{y}_e)_\ell + \lambda_e \delta_{1, \ell} \text{ for all } \ell \in [d_{t(e)}] ,\]
where $\delta_{i,j}$ is the Kronecker delta and the $\ell$ on the left hand side appears in position $\sigma(0)$.
We define $T_r$ to be the tensor whose non-zero entries are given by the above equation. This is well-defined since by Definition \ref{def:generic}(i.c) setting $e^\prime = e_r$, we have $\sigma(0) \neq \sigma^\prime(0)$.
It remains to show that we do not attempt to assign different values to the same entry of $T_r$. When $\ell = 1$, we assign the value 
$(\mathbf{y}_e)_1 + \lambda_e$. By \eqref{eq:lambda-e}, if $e \in (F \cap E_r) \cup G_r$, then $(\mathbf{y}_e)_1 + \lambda_e = (\mathbf{y}_e)_1 + 0 = (\mathbf{y}_e)_1 = \mu_r$ by \eqref{isotropicEquations} and the definition of $\mu_r$, and otherwise, $(\mathbf{y}_e)_1 + \lambda_e = (\mathbf{y}_e)_1 + (\mu_r - (\mathbf{y}_e)_1) = \mu_r$. Thus, for all edges $e \in E_r$, $(\mathbf{y}_e)_1 + \lambda_e = \mu_r$.
\end{proof}

To conclude this section, we 
show that $\mathscr{B} := \mathscr{B}(\mathbf{R},F)$ 
satisfies the conditions of Definition~\ref{def:almostGenerates}.
This shows that $\mathscr{B}$ is almost generated.
First we define the subvarieties $Y_i$ and the vector bundles $\mathscr{B}_i$ over $Y_i$ that appear in Definition~\ref{def:almostGenerates}.

We use the following notation. A linear functional $\varphi: \mathbb{C}^{d_{t(e)}}/\Span(\mathbf{x}_{t(e)}) \rightarrow \mathbb{C}$ can be uniquely represented by a vector $\mathbf{u} \in \mathbb{C}^{d_{t(e)}}$ such that $\mathbf{u}^\top \mathbf{x}_{t(e)} = 0$ and $\varphi([\mathbf{y}]) = \mathbf{u}^\top\mathbf{y}$,~\cite[Lemma 7]{friedland-ottaviani}. In particular when $\mathbf{x}_{t(e)} \in Q_{t(e)}$, we abbreviate $\mathbf{x}_{t(e)}^\top [\mathbf{y}]$ to $\mathbf{x}_{t(e)}^\top \mathbf{y}$.

For a subset $\alpha \subseteq [n]$,
define the smooth proper irreducible subvariety 
\[ Y_\alpha = X_1 \times \cdots \times X_n , \quad \text{where} \quad X_i = \begin{cases} \mathbb{P}(Q_i) & i \in \alpha \\ 
\mathbb{P}(\mathbb{C}^{d_i}) & i \notin \alpha.\end{cases} \]
In particular, $Y_\varnothing = X$.
Fix $F \subseteq E$ and define $F^\prime = \{t(e)\}_{e \in F}$. 
Fix $\alpha \subseteq [n] \setminus F^\prime$.
 Let $G_r \subseteq E_r \setminus F$ denote the edges whose target vertex lies in $\alpha$.
Define $\mathscr{B}_\alpha$ to be the vector bundle over $Y_\alpha$ whose fiber at $\mathbf{\chi} = ([\mathbf{x}_1],\ldots,[\mathbf{x}_n]) \in Y_\alpha$ is the subspace $U(\alpha,\mathbf{\chi})$ of linear maps $\tau = (\tau_e)_{e \in E} \in (\mathscr{B})_\mathbf{\chi}$ %where $\tau_e \in \mathscr{B}(e,F)_\mathbf{x}$, 
satisfying 
\begin{equation}{\label{tau-equations}}
\mathbf{x}_{t(e)}^\top \tau_{e}(\otimes_{j=1}^{\mu(e)} \mathbf{x}_{s_j(e)}) = \mathbf{x}_{t(e^\prime)}^\top \tau_{e^\prime}(\otimes_{j=1}^{\mu(e^\prime)} \mathbf{x}_{s_j(e^\prime)}),
\end{equation}
for any edges $e,e^\prime \in (F \cap E_r) \cup G_r$, for every $r \in [M]$.

\begin{proposition}\label{prop:almost_generates}
Let the map $L_F$ be as in \eqref{L-map-F}.
For any subset of hyperedges $F \subseteq E$, the vector subspace
$\Lambda = \;L_F\left(\bigoplus_{r = 1}^M \mathbb{C}^{e_r}\right)$ almost generates $\mathscr{B}(\mathbf{R},F)$.
\end{proposition}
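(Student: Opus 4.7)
The plan is to apply Definition~\ref{def:almostGenerates} directly, using the subvarieties $\{Y_\alpha\}$ and subbundles $\{\mathscr{B}_\alpha\}$ introduced in the paragraphs immediately preceding the proposition. I would take $\alpha$ to range over subsets of $[n]\setminus F^\prime$, with $Y_\varnothing = X$ playing the role of $Y_0$, and the $Y_\alpha$ for $\alpha \neq \varnothing$ serving as the proper subvarieties $Y_1, \ldots, Y_k$. Each such $Y_\alpha$ is smooth and irreducible, being a product of projective spaces with smooth projective quadrics $\mathbb{P}(Q_i)$ for $i \in \alpha$ (any vertex with $d_i = 1$ contributes only a trivial bundle and can be discarded). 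First I would verify that each $\mathscr{B}_\alpha$ is actually a vector subbundle of $\mathscr{B}(\mathbf{R},F)|_{Y_\alpha}$ of constant rank: its fibre at $\mathbf{\chi} \in Y_\alpha$ is the joint kernel of the linear functionals $\tau \mapsto \mathbf{x}_{t(e)}^\top \tau_e(\otimes_j \mathbf{x}_{s_j(e)}) - \mathbf{x}_{t(e')}^\top \tau_{e'}(\otimes_j \mathbf{x}_{s_j(e')})$ for pairs $e, e' \in (F \cap E_r) \cup G_r(\alpha)$ in each partition class, and each such functional is well-defined on the quotient factor because $\mathbf{x}_{t(e)}^\top \mathbf{x}_{t(e)} = 0$ for $e \in G_r(\alpha)$, and is surjective of constant rank on $Y_\alpha$.

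Next I would verify the three conditions of Definition~\ref{def:almostGenerates} in turn. Condition (i) follows immediately because $\alpha \subseteq \alpha'$ implies both $Y_{\alpha'} \subseteq Y_\alpha$ and $G_r(\alpha) \subseteq G_r(\alpha')$, so the linear constraints cutting out $(\mathscr{B}_{\alpha'})_\mathbf{\chi}$ strictly contain those cutting out $(\mathscr{B}_\alpha)_\mathbf{\chi}$, yielding $\mathscr{B}_{\alpha'} \subseteq \mathscr{B}_\alpha|_{Y_{\alpha'}}$. Conditions (ii) and (iii) are then precisely the two directions of Lemma~\ref{lem:isotropic-lemma}. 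For (ii), given $\sigma = L_F((T_r)_{r=1}^M) \in \Lambda$ and $\mathbf{\chi} \in Y_\alpha$, setting $\mathbf{y}_e := T_r(\mathbf{x}_{s(e)})$ and applying direction $(a)\Rightarrow(b)$ of the lemma produces the equations~\eqref{isotropicEquations}, which are exactly the defining equations~\eqref{tau-equations} of $(\mathscr{B}_\alpha)_\mathbf{\chi}$ (the lemma may supply still more equations if additional vertices accidentally happen to be isotropic at $\mathbf{\chi}$, but those only strengthen the conclusion). For (iii), the hypothesis $\mathbf{\chi} \in Y_\alpha \setminus \bigcup_{\beta \supsetneq \alpha} Y_\beta$ forces the set of isotropic vertices of $\mathbf{\chi}$ to be exactly $\alpha$, so Lemma~\ref{lem:isotropic-lemma}'s $G_r$ agrees with $G_r(\alpha)$; for any $\tau \in (\mathscr{B}_\alpha)_\mathbf{\chi}$, setting $\mathbf{y}_e := \tau_e(\otimes_j \mathbf{x}_{s_j(e)})$ yields data satisfying~\eqref{isotropicEquations} by definition of $\mathscr{B}_\alpha$, and direction $(b)\Rightarrow(a)$ of the lemma supplies tensors $T_r$ with $L_F((T_r)_{r=1}^M)(\mathbf{\chi}) = \tau$.

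The main obstacle I anticipate is the careful bookkeeping required to reconcile the $\mathbf{\chi}$-dependent $G_r$ of Lemma~\ref{lem:isotropic-lemma} with the $\alpha$-dependent $G_r(\alpha)$ used in defining $\mathscr{B}_\alpha$: the avoidance condition in (iii) is precisely what forces the two notions to coincide, and this is where the stratification of $X$ by the isotropy loci becomes essential. In the degenerate case when $[n]\setminus F^\prime = \varnothing$ and each $|F \cap E_r| \leq 1$, there are no proper $Y_\alpha$ available; Lemma~\ref{lem:isotropic-lemma} then applies with vacuous constraints, so $L_F$ surjects onto every fibre of $\mathscr{B}(\mathbf{R}, F)$ and the bundle is globally generated by the image of $L_F$.
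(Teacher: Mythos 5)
Your proposal matches the paper's proof in structure and content: both verify conditions (i)--(iii) of Definition~\ref{def:almostGenerates} on the isotropy stratification $\{Y_\alpha\}_{\alpha \subseteq [n]\setminus F'}$ with the subbundles $\mathscr{B}_\alpha$, and both conditions (ii) and (iii) are extracted from the two directions of Lemma~\ref{lem:isotropic-lemma}. Your extra attention to constant rank of $\mathscr{B}_\alpha$, to the degenerate case $[n]\setminus F'=\varnothing$, and to the possibility that additional vertices are accidentally isotropic in part (ii) are all reasonable points that the paper leaves implicit, and your treatment is otherwise in lockstep with the published argument.

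One caution, which applies equally to the paper's own exposition: in your verification of (iii) you assert that avoiding all proper $Y_\beta$ forces the set of isotropic vertices of $\chi$ to be \emph{exactly} $\alpha$. Since the $Y_\beta$ are indexed only by $\beta \subseteq [n]\setminus F'$, this avoidance rules out isotropy at vertices outside $F'\cup\alpha$, but it does not rule out the possibility that some vertex in $F'$ is isotropic at $\chi$. In that event, Lemma~\ref{lem:isotropic-lemma}'s $\chi$-dependent $G_r$ could properly contain $G_r(\alpha)$, so direction $(b)\Rightarrow(a)$ of the lemma would need a slightly larger hypothesis than~\eqref{tau-equations} supplies. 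This subtlety does not arise for $F=\varnothing$ (where $F'=\varnothing$), which is the case that determines the dimension and degree in Theorem~\ref{thm:main}, and it is left unaddressed in the paper as well -- so it does not count against your proposal, but you should be aware that your sentence overstates what the avoidance condition delivers when $F\neq\varnothing$.
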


\begin{proof}
%Finally, Lemma \ref{lem:isotropic-lemma} verifies Definition \ref{def:almostGenerates}(ii)-(iii).

We first show that the vector bundles $\mathscr{B}_\alpha$
satisfy Definition \ref{def:almostGenerates}(i). If $\alpha, \beta \subseteq [n]\setminus F^\prime$, then $\alpha \subsetneq \beta$ if and only if $Y_\beta$ is a proper subvariety of $Y_\alpha$. Furthermore, $\mathscr{B}_\beta$ is a subbundle of $\mathscr{B}_\alpha\big\vert_{Y_\alpha}$, since $U(\beta,\mathbf{\chi})$ is a vector subspace of $U(\alpha,\mathbf{\chi})$.

Next we prove that Definition \ref{def:almostGenerates}(ii) holds.
Recall that $\Lambda(\mathbf{\chi}) := \{\sigma(\mathbf{\chi}) \; | \; \sigma \in \Lambda\}$. We show that $\Lambda(\mathbf{\chi}) \subseteq (\mathscr{B}_\alpha)_\mathbf{\chi}$.
If $\mathbf{\chi} \in Y_\alpha$, then an element of $\Lambda(\mathbf{\chi})$ is an $|E|$-tuple of linear maps $L_{e,F}(T_r)(\mathbf{\chi})$ for some tensors $T_r \in \mathbb{C}^{e_r}$, $r \in [M]$. By the proof of $(a \Rightarrow b)$ in Lemma \ref{lem:isotropic-lemma}, $\tau_e := L_{e,F}(T_r)(\mathbf{\chi})$ satisfy \eqref{tau-equations}, so $\Lambda(\mathbf{\chi}) \subseteq (\mathscr{B}_\alpha)_\mathbf{\chi}$.

Finally we show that Definition \ref{def:almostGenerates}(iii) holds.
If $\mathbf{\chi}$ lies on $Y_\alpha$ but not on any proper subvariety $Y_\beta$, then every $(\tau_e)_{e \in E} \in (\mathscr{B}_\alpha)_\mathbf{\chi}$ satisfies~\eqref{tau-equations} and no additional equations. Thus there exist tensors $T_r$ with $L_{e,F}(T_r) = \tau_e$ for $e \in E_r$ and $\tau \in \Lambda(\mathbf{\chi})$,
by Lemma \ref{lem:isotropic-lemma}. Hence, $\Lambda(\mathbf{\chi}) = (\mathscr{B}_\alpha)_\mathbf{\chi}$.
\end{proof}

\section{The top Chern class of the singular vector bundle}
\label{sec:top_chern}

In this section we compute the top Chern class of the singular vector bundle $\mathscr{B}(\mathbf{R})$, generalizing~\cite[Lemma 3]{friedland-ottaviani}. Combining this computation with Theorem~\ref{thm:bertiniTheorem} and Proposition~\ref{prop:almost_generates} finds the degree of the singular vector variety, 
completing the proof of Theorem~\ref{thm:main}.

\begin{proposition}{\label{prop:top-chern-class}}
Let $\mathbf{R} = (\mathbf{d},T)$ be a hyperquiver representation and $\mathscr{B}(\mathbf{R})$ be the singular vector bundle over $X = \prod_{i=1}^n \mathbb{P}(\mathbb{C}^{d_i})$. Then the top Chern class of $\mathscr{B}(\mathbf{R})$ is
$$\prod_{e \in E}\sum_{k = 1}^{d_{t(e)}} h_{t(e)}^{k-1} h_{s(e)}^{d_{t(e)}-k} , \quad \text{where} \quad h_{s(e)} = \sum_{j=1}^{\mu(e)} h_{s_j(e)}, $$
in the Chow ring $A^*(X) \cong \mathbb{Z}[h_1,\ldots,h_n]/(h_1^{d_1},\ldots,h_n^{d_n})$.
\end{proposition}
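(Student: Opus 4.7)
The plan is to reduce the computation to individual edges via the Whitney sum formula, and then to compute the top Chern class of each summand $\mathscr{B}(e)$ by combining the isomorphism in Proposition~\ref{prop:Be_properties}(c) with the standard formulas for Chern classes of tensor products with a line bundle.

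First I would invoke the Whitney sum formula. Since $\mathscr{B}(\mathbf{R}) = \bigoplus_{e \in E} \mathscr{B}(e)$ and the rank of $\mathscr{B}(\mathbf{R})$ equals the sum of the ranks of the summands, the top Chern class factors as
\[
c_{\mathrm{top}}(\mathscr{B}(\mathbf{R})) \;=\; \prod_{e \in E} c_{d_{t(e)}-1}(\mathscr{B}(e)),
\]
so it suffices to establish, for each edge $e$, the identity
\[
c_{d_{t(e)}-1}(\mathscr{B}(e)) \;=\; \sum_{k=1}^{d_{t(e)}} h_{t(e)}^{k-1} h_{s(e)}^{d_{t(e)}-k}.
\]

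Next I would use Proposition~\ref{prop:Be_properties}(c) to write $\mathscr{B}(e) = \mathscr{L}(e) \otimes \pi_{t(e)}^*\mathscr{Q}(d_{t(e)})$, where $\mathscr{L}(e) := \bigotimes_{j=1}^{\mu(e)} \pi_{s_j(e)}^*\mathscr{H}(d_{s_j(e)})$ is a line bundle. Since Chern classes pull back and the first Chern class of a tensor product of line bundles is the sum of their first Chern classes, we have $c_1(\mathscr{L}(e)) = \sum_{j=1}^{\mu(e)} h_{s_j(e)} = h_{s(e)}$. For the second factor, the short exact sequence~\eqref{exact-sequence-tfq-star} combined with the Whitney formula gives $c(\pi_{t(e)}^*\mathscr{Q}(d_{t(e)})) \cdot (1 - h_{t(e)}) = 1$ in $A^*(X)$, so working modulo $h_{t(e)}^{d_{t(e)}}$ one obtains $c_j(\pi_{t(e)}^*\mathscr{Q}(d_{t(e)})) = h_{t(e)}^j$ for each $0 \le j \le d_{t(e)}-1$.

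It then remains to apply the standard tensor product formula: for a rank-$r$ bundle $E$ and a line bundle $L$,
\[
c_r(E \otimes L) \;=\; \sum_{j=0}^{r} c_j(E)\, c_1(L)^{\,r-j}.
\]
Setting $E = \pi_{t(e)}^*\mathscr{Q}(d_{t(e)})$, $L = \mathscr{L}(e)$, and $r = d_{t(e)}-1$, substituting the Chern classes computed above, and reindexing $k = j+1$ yields the desired expression for $c_{d_{t(e)}-1}(\mathscr{B}(e))$. Multiplying over $e \in E$ and invoking the presentation $A^*(X) \cong \mathbb{Z}[h_1,\ldots,h_n]/(h_1^{d_1},\ldots,h_n^{d_n})$ from the K\"unneth formula for the Chow ring of a product of projective spaces finishes the argument.

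The only genuinely delicate step is verifying the tensor product formula (or, equivalently, the splitting-principle computation with Chern roots $\alpha_i + c_1(L)$); everything else is an application of Whitney, pullback compatibility, and the quotient computation from \eqref{exact-sequence-tfq}. I do not anticipate any serious obstacle, since the hardest structural input, namely the isomorphism $\mathscr{B}(e) \cong \mathscr{L}(e) \otimes \pi_{t(e)}^*\mathscr{Q}(d_{t(e)})$, has already been established in Proposition~\ref{prop:Be_properties}(c).
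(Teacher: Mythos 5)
Your proposal is correct, and it reaches the formula by a somewhat more direct route than the paper. Both proofs begin the same way: Whitney's formula reduces to a single edge $e$, and the tensor-product decomposition $\mathscr{B}(e) \cong \mathscr{L}(e) \otimes \pi_{t(e)}^*\mathscr{Q}(d_{t(e)})$ from Proposition~\ref{prop:Be_properties}(c) is the key structural input. Where they diverge is in how the Chern class of that tensor product is extracted. The paper writes the Chern roots of $\pi_{t(e)}^*\mathscr{Q}(d_{t(e)})$ explicitly as $-\zeta_{d_{t(e)}}^k h_{t(e)}$ using the formal factorisation $1 - x^d = \prod_k(1 - \zeta_d^k x)$, passes to Chern characters so that tensoring by the line bundle becomes multiplication by $\exp(h_{s(e)})$, and then converts back to a Chern polynomial and reassembles the product via $x^d - y^d = \prod_k(x - \zeta_d^k y)$. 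You instead apply the splitting-principle identity $c_r(E \otimes L) = \sum_{j=0}^r c_j(E)\,c_1(L)^{r-j}$ with $c_j(\pi_{t(e)}^*\mathscr{Q}(d_{t(e)})) = h_{t(e)}^j$ read off from $C(t,\mathscr{Q})\cdot(1-ht) = 1$, which gives the stated sum after reindexing. The content is the same (both are the splitting principle in disguise), but your version skips the Chern character detour and the roots-of-unity bookkeeping, which makes it shorter and arguably cleaner; the paper's route has the advantage of keeping the whole Chern polynomial $C(t, \mathscr{B}(e))$ in hand rather than only the top coefficient, though that extra information is not needed here.

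One cosmetic note: when you compute $c_j(\pi_{t(e)}^*\mathscr{Q}(d_{t(e)})) = h_{t(e)}^j$ "modulo $h_{t(e)}^{d_{t(e)}}$'', there is nothing to discard in the range $0 \le j \le d_{t(e)}-1$ you actually use, so you could state this as an equality in $A^*(X)$ directly; the truncation only matters for $j \ge d_{t(e)}$, which is beyond the rank of $\mathscr{Q}(d_{t(e)})$ anyway.
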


\begin{proof}
We seek the Chern polynomial $C(t,\mathscr{B}(\mathbf{R}))$. The coefficient of its highest power of $t$ is the top Chern class.
The Chern polynomial is multiplicative over short exact sequences, see Definition~\ref{def:chern-classes}(iii). Hence
\begin{equation}{\label{chern-polynomial-exact}}
C(t,\mathscr{F}(d)) = C(t,\mathscr{T}(d))C(t,\mathscr{Q}(d)),
\end{equation}
by \eqref{exact-sequence-tfq}.
We compute $C(t,\mathscr{T}(d))$.
Let $h \in A^*(\mathbb{P}(\mathbb{C}^{d})) \cong \mathbb{Z}[h]/(h^d)$ be the class of a hyperplane in $\mathbb{P}(\mathbb{C}^{d})$. By Definition \ref{def:chern-classes}(i)-(iii), $h$ is the first Chern class $c_1(\mathscr{H}(d))$ and the Chern polynomial of $\mathscr{H}(d)$ is $C(t, \mathscr{H}(d)) = 1 + h t$. Thus $C(t,\mathscr{T}(d)) = C(-t,\mathscr{H}(d)^\vee) = 1-h t$, by 
Proposition~\ref{chernProperties}(b).

Next we compute $C(t,\mathscr{Q}(d))$.
We have $C(t,\mathscr{F}(d)) = 1$, by Proposition~\ref{chernProperties}(a).
The Chern polynomial of $\mathscr{Q}(d)$ is the inverse of $(1 - h t)$, 
by \eqref{chern-polynomial-exact}.
Using the formal factorisation $1 - x^n = \prod_{k = 0}^n (1 - \zeta_n^kx)$ over $A^*(X)\otimes\mathbb{C}$, we therefore have
$$C(t,\mathscr{Q}(d)) = \sum_{j = 0}^{d-1} (ht)^j = \frac{1 - (ht)^{d-1}}{1 - ht} = \frac{\prod_{k = 0}^{d-1}(1 - \zeta_{d}^kht)}{1 - ht} = \prod_{k = 1}^{d-1} (1-\zeta_{d}^kht)$$
where $\zeta_{d} \in \mathbb{C}$ is a $d$-th root of unity.

We have $c_1(\pi_i^*\mathscr{H}(d_i)) = \pi_i^*c_1(\mathscr{H}(d_i)) = \pi_i^*h_i = h_i \in A^*(X)$, by Definition \ref{def:chern-classes}(iv) and Definition \ref{pushforwardPullback}(ii). Thus the Chern polynomials of $\pi_i^*\mathscr{H}(d_i)$, $\pi_i^*\mathscr{T}(d_i)$, and $\pi_i^*\mathscr{Q}(d_i)$ equal those of $\mathscr{H}(d)$, $\mathscr{T}(d)$, and $\mathscr{Q}(d)$ respectively but with $h$ replaced by $h_i \in A^*(X)$, by \eqref{exact-sequence-tfq-star}.

We have found the Chern roots of $\pi_i^*\mathscr{H}(d_i)$ and $\pi_i^*\mathscr{Q}(d_i)$, so we obtain Chern characters
$\ch(\pi_i^*\mathscr{H}(d_i)) = \exp(h_i)$ and $\ch(\pi_i^*\mathscr{Q}(d_i)) = \sum_{k = 1}^{d_i-1}\exp (-\zeta_{d_i}^kh_i)$.
By Propositions \ref{prop:Be_properties}(c) and \ref{chernProperties}(c), the Chern character $\ch(\mathscr{B}(e))$ equals
\begin{align*} \ch\left(\bigotimes_{j=1}^{\mu(e)} \pi_{s_j(e)}^*\mathscr{H}(d_{s_j(e)}) \otimes \pi_{t(e)}^*\mathscr{Q}(d_{t(e)})\right) & = \ch\left(\bigotimes_{j=1}^{\mu(e)} \pi_{s_j(e)}^*\mathscr{H}(d_{s_j(e)})\right)\ch(\pi_{t(e)}^*\mathscr{Q}(d_{t(e)})) \\
&= \left(\prod_{j=1}^{\mu(e)} \exp (h_{s_j(e)})\right)\left(\sum_{k = 1}^{d_{t(e)}-1} \exp (-\zeta_{d_{t(e)}}h_{t(e)})\right) \\ & = \sum_{k = 1}^{d_{t(e)}-1}\exp \left(\sum_{j=1}^{\mu(e)} h_{s_j(e)}-\zeta_{d_{t(e)}}^kh_{t(e)}\right)  .  
\end{align*}
Switching to Chern polynomial form, we obtain
$$C(t,\mathscr{B}(e)) = \prod_{k = 1}^{d_{t(e)}-1} \left(1 + \left(\sum_{j=1}^{\mu(e)} h_{s_j(e)} - \zeta_{d_{t(e)}}^kh_{t(e)}\right)t\right).$$
This product has degree $(d_{t(e)}-1)$ in $t$, with top coefficient
$$\prod_{k = 1}^{d_{t(e)}-1} \left(\sum_{j=1}^{\mu(e)} h_{s_j(e)} - \zeta_{t(e)}^kh_{t(e)}\right).$$
It follows from Definition~\ref{def:chern-classes}(iii) that
$C(t,\mathscr{B}(\mathbf{R})) = \prod_{e \in E} C(t,\mathscr{B}(e))$.
The product has degree $(\sum_{e \in E} d_{t(e)} - |E|)$ in $t$, with top coefficient (i.e., top Chern class of $\mathscr{B}(\mathbf{R})$) equal to
$$\prod_{e \in E} \prod_{k = 1}^{d_{t(e)}-1} \left(\sum_{j=1}^{\mu(e)} h_{s_j(e)} - \zeta_{t(e)}^kh_{t(e)}\right).$$ Finally, the formal identity $x^n - y^n = \prod_{k = 0}^n (x - \zeta_n^k y)$ gives
\begin{align*} \prod_{e \in E} \prod_{k = 1}^{d_{t(e)}-1} \left(\sum_{j=1}^{\mu(e)} h_{s_j(e)} - \zeta_{t(e)}^kh_{t(e)}\right) & = \prod_{e \in E} \frac{\left(\sum_{j=1}^{\mu(e)} h_{s_j(e)}\right)^{d_{t(e)}-1} - h_{t(e)}^{d_{t(e)}-1}}{\sum_{j=1}^{\mu(e)} h_{s_j(e)} - h_{t(e)}} \\ & = \prod_{e \in E} \sum_{k = 0}^{d_{t(e)}-1}\left(\sum_{j=1}^{\mu(e)} h_{s_j(e)}\right)^{d_{t(e)}-1-k} h_{t(e)}^k \in A^*(M). \qedhere
\end{align*} 
\end{proof}
%\prod_{e \in E} \frac{\prod_{k = 0}^{d_{t(e)}-1} \left(\sum_{j=1}^{\mu(e)} h_{s_j(e)} - \zeta_{t(e)}^kh_{t(e)}\right)}{\sum_{j=1}^{\mu(e)} h_{s_j(e)} - h_{t(e)}}$$

\noindent To conclude, we now prove our main theorem.

\begin{proof}[Proof of Theorem \ref{thm:main}]
The zero locus of a generic global section of $\mathscr{B} := \mathscr{B}(\mathbf{R},F)$ is the singular vector variety $\mathcal{S}(\mathbf{R})$, with zero singular values along the edges in $F$, by 
Propositions~\ref{prop:zeros-are-singular-vectors} and \ref{prop:zeros-are-singular-vectors-F}. 
The singular vector bundle $\mathscr{B}$ from Definition~\ref{def:singular-vector-bundle-with-F} is almost generated, by Proposition \ref{prop:almost_generates}. Hence our Bertini-type theorem Theorem \ref{thm:bertiniTheorem} applies to it, to characterise the zeros of a generic section. It remains to derive the polynomial \eqref{eq:main-thm-polynomial}, prove the emptiness statement for $\mathcal{S}(\mathbf{R})$ as well as its dimension and degree, and prove the statement regarding finitely many singular vector tuples.

We first consider the case $F = \varnothing$. The top Chern class $c_r(\mathscr{B})$ is given by Proposition~\ref{prop:top-chern-class}. If $N = d - r = 0$, then $\mathcal{S}(\mathbf{R})$ has the claimed number of points by Theorem \ref{thm:bertiniTheorem}(b). Suppose $r < d$. Let $s: X \xhookrightarrow{} \mathbb{P}^D$ be the Segre embedding and let $[l] \in A^*(\mathbb{P}^D)$ be the class of a hyperplane. Continuing \eqref{topChernNumberCalculation}, we have
\begin{align}
{\label{topChernNumberCalculation-2}}
\nu\left(\mathscr{B}\big|_L\right)[p] &= [L]c_r(\mathscr{B}) = [L]s_*(c_r(\mathscr{B})) = [l]^Ns_*(c_r(\mathscr{B})) && \text{(definition of pushforward)} \nonumber \\
&= s_*(s^*([l]^N)c_r(\mathscr{B})) = s^*([l]^N)c_r(\mathscr{B}) && \text{(projection formula)} \nonumber \\
&= s^*([l])^Nc_r(\mathscr{B}) = (\textstyle{\sum_{i = 1}^n h_i})^N c_r(\mathscr{B}) && \text{(\cite[Example 8.4.3]{fulton_1998})}
\end{align}
where $A^*(X) \cong \mathbb{Z}[h_1,\ldots,h_n]/(h_1^{d_1},\ldots,h_n^{d_n})$, giving us the polynomial \eqref{eq:main-thm-polynomial}.

We prove the emptiness statement by showing that $\nu\left(\mathscr{B}\big|_L\right) = 0$ if and only if $c_r(\mathscr{B}) = 0$. By the proof of Theorem \ref{thm:bertiniTheorem}, $c_r(\mathscr{B}) = 0$ if and only if $\mathcal{S}(\mathbf{R}) = \varnothing$. If $c_r(\mathscr{B}) = 0$, then $\nu\left(\mathscr{B}\big|_L\right) = 0$ by \eqref{topChernNumberCalculation-2}. Conversely, if $c_r(\mathscr{B}) \neq 0$, then there exists a monomial $h_1^{a_1}\ldots h_n^{a_n}$ in $c_r(\mathscr{B})$ such that $a_i < d_i$ and $\sum_{i=1}^n a_i = r$. There exists a monomial $h_1^{d_1-1-a_1^\prime}\ldots h_n^{d_n-1-a_n^\prime}$ in $\left(\sum_{i=1}^n h_i\right)^{d-r}$ such that $\sum_{i=1}^na_i^\prime = r$. Thus, these monomials pair in the product $[L]c_r(\mathscr{B})$ to form the monomial $[p] = h_1^{d_1-1}\ldots h_n^{d_n-1}$. The coefficient of this monomial is $\nu\left(\mathscr{B}\big|_L\right)$, which is non-zero. Therefore if $\nu\left(\mathscr{B}\big|_L\right) \neq 0$, $\mathcal{S}(\mathbf{R})$ has the claimed dimension and degree by Theorem \ref{thm:bertiniTheorem}.

It remains to prove the last sentence of the theorem, which pertains to the case $N=0$. Fix $\varnothing \neq \alpha \subseteq [n]$ and define
$\mathscr{B}_\alpha$ as in the proof of
Proposition~\ref{prop:almost_generates}.
Then $\rk \mathscr{B}_\alpha = \rk \mathscr{B} - (|\alpha|-1) > \rk \mathscr{B} - |\alpha| = \dim(X) - |\alpha| = \dim(Y_\alpha)$ as the fibers of $\mathscr{B}_\alpha$ are vector subspaces of the fibers of $\mathscr{B}$ cut down by $|\alpha|-1$ linearly independent equations \eqref{tau-equations}. Thus, every singular vector has multiplicity 1 and is non-isotropic by Theorem \ref{thm:bertiniTheorem}(b).
Finally, if $F \neq \varnothing$ then $\rk \mathscr{B} > \dim(X)$ by \eqref{prop:rank-of-singular-vector-bundle-with-F}, so $\mathbf{R}$ has no singular values equal to 0, by Theorem \ref{thm:bertiniTheorem}(a).
\end{proof} 

As a final note, one may be interested in different homogenisations of singular vectors, such as \textit{H-eigenvectors}, as opposed to our non-homogeneous definition \eqref{eq:singular-vector-hyperquiver} of singular vectors which are often called \textit{Z-eigenvectors} \cite{qi2005eigenvalues}. For a positive integer $\ell$, we define an \textit{$\ell$-homogeneous singular vector tuple} of a hyperquiver representation $\mathbf{R} = (\mathbf{d},T)$ to be a singular vector tuple $([\x_1],\ldots,[\x_n]) \in \P(\CC^{d_n}) \times \cdots \times \P(\CC^{d_1})$ satisfying
\begin{align*}
    [T(\x_{s(e)})] = [\x_{t(e)}^{\odot \ell}]
\end{align*}
in $\P\left(\CC^{d_{t(e)}}\right)$ for all edges $e \in E$, where $\odot$ is the entry-wise Hadamard product of a vector. A singular vector tuple corresponds to $\ell = 1$. Then in fact, all of Theorem \ref{thm:main} is still true when $S(\mathbf{R})$ is replaced with the set of all $\ell$-homogeneous singular vector tuples $S^\ell(\mathbf{R})$, and when the polynomial \eqref{eq:main-thm-polynomial} is replaced with the polynomial
\begin{align}{\label{eq:main-thm-polynomial-homo}}
    \left(\sum_{i \in V} h_i\right)^N \cdot \prod_{e \in E} \left( \sum_{k = 1}^{d_{t(e)}} 
    (\ell h_{t(e)})^{k-1}
    h_{s(e)}^{d_{t(e)}-k} \right), \quad \text{where} \quad h_{s(e)} := \sum_{j=1}^{\mu(e)} h_{s_j(e)}.
\end{align}
Let us consider, for example, counting the number of $\ell$-eigenvectors of a generic tensor, as in Example \ref{ex:eigenvectors}. The polynomial \eqref{eq:main-thm-polynomial-homo} in this case gives us
$$\sum_{k=1}^d (\ell h)^{k-1}((m-1)h)^{d-k} = \left(\sum_{k=1}^d \ell^{k-1} (m-1)^{d-k}\right)h^{d-1} = \frac{(m-1)^d -\ell^d}{(m-1) - \ell}h^{d-1}$$
so it is $d(m-1)^{d-1}$ if $\ell = m-1$ and $\frac{(m-1)^d -\ell^d}{(m-1) - \ell}$ otherwise, thus generalising \cite[Theorem 2.2]{eigenconfiguration}.

We briefly outline the modifications needed to prove this more general result. Looking to the beginning of Section \ref{sec:bundles}, we seek to replace the tautological line bundle $\mathscr{T}(d)$, whose fiber at $[\x] \in \P(\CC^d)$ is $\text{span}(\x)$, with the line bundle $\mathscr{S}(d)$, whose fiber at $[\x]$ is $\text{span}(\x^{\odot \ell})$. We also want to consider the vector bundle  $\mathscr{P}(d_i)$ fitting into the short exact sequence
\begin{equation*}
0 \rightarrow \mathscr{S}(d_i) \rightarrow \mathscr{F}(d_i) \rightarrow \mathscr{P}(d_i) \rightarrow 0.
\end{equation*}
The line bundle $\mathscr{S}(d)$ is isomorphic to the line bundle $\mathscr{T}(d)^{\otimes \ell}$, hence they have the same Chern polynomial. One computes the Chern polynomial to be $C(\mathscr{T}(d)^{\otimes \ell},t) = 1 - \ell h t$ and deduces from there the top Chern class intersected with the necessary number of hyperplanes to be the polynomial \eqref{eq:main-thm-polynomial-homo} by following the computation analogously to earlier in this section.

\section{Inverse tensor eigenvalue problems}\label{sec:inverse-problems}

The problem we have considered up until now in this paper is the following: Given a hyperquiver $H = (V,E)$ and a representation $\mathbf{R} = (\mathbf{d},T)$ assigning tensors $T_e \in \CC^e$ to each edge $e \in E$, what is the space of singular vector tuples $([\mathbf{x}_1],\ldots,[\mathbf{x}_n]) \in \prod_{i = 1}^n \mathbb{P}(\CC^{d_i})$? In this section, we consider the converse problem: Given a hyperquiver $H = (V,E)$ and an assignment of vectors $[\mathbf{x}_i] \in \mathbb{P}(\CC^{d_i})$ to each vertex $i \in [n]$, what is the space of tensors $(T_e)_{e \in E} \in \prod_{e \in E} \CC^e$ realising $([\mathbf{x}_1],\ldots,[\mathbf{x}_n])$ as a singular vector tuple?

We make this problem more precise and more general. Let $H = (V,E)$ be a hyperquiver, $\mathbf{d} = (d_1,\ldots,d_n)$ be a dimension vector, $\lambda_e \in \CC$ be scalars for $e \in E$, and $\mathbf{x}_i \in \CC^{d_i}$ be vectors for $i \in [n]$. Let $E = \coprod_{r = 1}^M E_r$ be a partition of the hyperedges such that for all $r \in [M]$ and for any two edges $e,e^\prime \in E_r$, the tuple of dimensions $(d_{t(e)},d_{s_1(e)},\ldots,d_{s_\mu(e)})$ and $(d_{t(e^\prime)},d_{s_1(e^\prime)},\ldots,d_{s_\mu(e^\prime)})$ have the same number of entries and agree up to permutation. Following the notation outlined below equation \eqref{eq:T_r-notation}, fix an edge $e_r \in E_r$ for each $r \in [M]$, and fix the permutation $\sigma_{e,e_r}$ of the set $\{0,1,\ldots,\mu\}$ for each $e \in E_r$, $e \neq e^\prime$ for which the tuples of dimensions for $e_r$ and $e$ agree. Then the problem is to find tensors $T_e \in \CC^{e}$ such that
\begin{equation}{\label{eq:inverse-tensor-eigenvalue-problem}}
    T_e(\mathbf{x}_{s(e)}) = \lambda_e \cdot \mathbf{x}_{t(e)}
\end{equation}
for all $e \in E$, where any tensor $T_e$ assigned to an edge $e \in E_r$, $e \neq e_r$ agrees with the tensor $T_r := T_{e_r}$ assigned to the edge $e_r \in E_r$ up to the chosen permutation $\sigma_{e,e_r}$. Note that the conditions this partition of $E$ has to satisfy are weaker than property (i) of a partition of a hyperquiver in Definition \ref{def:generic} of a generic representation, since the latter requires that the tuples of vertices $v(e)$ and $v(e^\prime)$ for $e,e^\prime \in E_r$ agree up to permutation. We call this problem the \textit{inverse singular value problem} for hyperquivers.

This problem is a significant generalisation of the inverse matrix and tensor eigenvalue problems, which are concerned with finding a matrix or tensor having a prescribed set of eigenvalues or eigenvectors. The matrix problem, and its many variations that involve imposing structural constraints on the matrix, have a range of applications including principal component analysis, control theory, numerical analysis, and inverse problems, see \cite{chu-inverse-1998,Chu_Golub_2002} for a review of the topic. In the tensor problem, essentially the only case that has been studied is where no structural constraints are imposed, and even then the problem is much more difficult and less well-understood. The solvability of the inverse tensor eigenvalue problem \cite{YH17} and some geometric properties of the space of characteristic polynomials of tensors \cite{GGTV23} are known, however even basic quantities such as the dimension of this space are not known, which is crucial for numerical methods in algebraic geometry \cite{breiding-compressed,condition-book}. The inverse tensor eigenvalue problem has seen applications in inverse problems for PDEs \cite{zayed-inverse} and higher-order Markov chains \cite{Li_Markov_Chain_Inverse}.

In this section, we solve the inverse singular value problem for hyperquivers by providing an algorithm that finds tensors $T_e$ to place on all the edges $e \in E$ of the hyperquiver satisfying \eqref{eq:inverse-tensor-eigenvalue-problem} and the partition conditions. We generalize the approach from \cite{yokohama2,LDZ21} solving the inverse tensor eigenvalue problem in a number of different directions. One is that the eigenvectors are not required to be homogeneous, i.e. \textit{H-eigenvectors} \cite{qi2005eigenvalues}, and any homogenisation, e.g. \textit{Z-eigenvectors}, can be used. Another is that the vectors do not have to be eigenvectors, but more generally singular vectors arranged in any hyperquiver structure. Our algorithm works for both real and complex singular vector tuples.

We define the inner product on two tensors $A,B \in \mathbb{C}^{d_1\times \ldots\times d_m}$ to be
\begin{equation*}
    \langle A, B \rangle = \sum_{\substack{i_1 = 1, \ldots, d_1 \\ \ldots \\ i_m = 1, \ldots, d_m}} A_{i_1, \ldots, i_m}\overline{B_{i_1, \ldots, i_m}}
\end{equation*}
where the bar denotes the conjugate of a complex number. We use the same bar to denote the conjugate of a complex vector. This gives rise to the Frobenius norm of a tensor
\begin{equation*}
    \|A\| = \sqrt{\langle A, A \rangle}
\end{equation*}
We also use this notation for complex vectors, i.e. for $m = 1$. Note that when contracting tensors with vectors, however, we are still using the standard real inner product with complex vectors as before, and not this standard Hermitian inner product, see Remark \ref{remark:choice-of-bases}.

\begin{algorithm}
\caption{Inverse singular value problem for $H = (V,E)$, $E = \coprod_{r = 1}^M E_r$, $\lambda_e \in \CC$, $\x_i \in \CC^{d_i}$}\label{alg:inverse-singular-value}
\begin{flushleft}
\textbf{Input:} $T_r^{(1)} \in \mathbb{C}^{e_r}$ for $r \in [M]$ initialisations \\
\textbf{Output:} $T_r \in \mathbb{C}^{e_r}$ for $r \in [M]$ solving the inverse singular value problem, or no solution
\end{flushleft}
\begin{algorithmic}[1]
\For{$r = 1,\ldots,M$}
\State $e_1 \gets e_r$
\State $E_r \gets \{e_1,\ldots,e_m\}$
\For{$\ell = 1,\ldots,m$}
\State $\mathbf{r}_\ell^{(1)} \gets \lambda_{e_\ell} \mathbf{x}_{t(e_\ell)} - T_r^{(1)}(\mathbf{x}_{s(e_\ell)}) \in \mathbb{C}^{d_{t(e_\ell)}}$
\EndFor
\State $\mathbf{r}^{(1)} \gets \begin{pmatrix}
{\mathbf{r}^{(1)}_{1}}^\top &|& \cdots &|& {\mathbf{r}^{(1)}_{m}}^\top
\end{pmatrix}^\top \in \mathbb{C}^{D_r}, \;\; D_r := \sum_{\ell=1}^m d_{t(e_\ell)}$
\State $\displaystyle P^{(1)} \gets \sum_{\ell=1}^m \left(\bigotimes_{j=0}^{\sigma_{\ell,1}(0)-1}\overline{\x_{s_{1,j}(e_\ell)}}\right)\otimes \mathbf{r}_\ell^{(1)} \otimes \left(\bigotimes_{j=\sigma_{\ell,1}(0)+1}^{\mu}\overline{\x_{s_{1,j}(e_\ell)}}\right) \in \CC^{e_r}$
\State $k \gets 1$
\While{$\mathbf{r}^{(k)} \neq 0$}
\If{$P^{(k)} = 0$}
    \State halt   \Comment{No solution exists}
\Else
    \State $\displaystyle T_r^{(k+1)} = T_r^{(k)} + \frac{\|\mathbf{r}^{(k)}\|^2}{\|P^{(k)}\|^2}P^{(k)}$ \label{alg:T-update}
    \For{$\ell = 1,\ldots,m$}
    \State $\mathbf{r}_\ell^{(k+1)} \gets \lambda_{e_\ell} \mathbf{x}_{t(e_\ell)} - T_r^{(k+1)}(\mathbf{x}_{s(e_\ell)})$ \label{alg:r-ell-def}
    \EndFor
    \State $\mathbf{r}^{(k+1)} \gets \begin{pmatrix}
    {\mathbf{r}^{(k+1)}_{1}}^\top &|& \cdots &|& {\mathbf{r}^{(k+1)}_{m}}^\top
    \end{pmatrix}^\top$ \label{alg:r-def}
    \State $\displaystyle P^{(k+1)} \gets \sum_{\ell=1}^m \left(\bigotimes_{j=0}^{\sigma_{\ell,1}(0)-1}\overline{\x_{s_{1,j}(e_\ell)}}\right)\otimes \mathbf{r}_\ell^{(k+1)} \otimes \left(\bigotimes_{j=\sigma_{\ell,1}(0)+1}^{\mu}\overline{\x_{s_{1,j}(e_\ell)}}\right) + \frac{\|\mathbf{r}^{(k+1)}\|^2}{\|\mathbf{r}^{(k)}\|^2}P^{(k)}$ \label{alg:P-update}
    \State $k \gets k + 1$
\EndIf
\EndWhile
\EndFor
\end{algorithmic}
\end{algorithm}

The algorithm is presented in Algorithm \ref{alg:inverse-singular-value}. For each $r \in [M]$, we enumerate the edges $E_r = \{e_1,\ldots,e_m\}$ and assume without loss of generality that $e_1 = e_r$. Since every tensor $T_e$ assigned to $e \in E_r, e \neq e_r$ agrees with the tensor $T_r$ up to permutation, it suffices to find the tensors $T_r$ for $r \in [M]$. We write $T_r(\mathbf{x}_{s(e)}) := T_e(\mathbf{x}_{s(e)})$ for all $e \in E_r$, and we use the same notation for any other tensor $P \in \CC^{e_r}$ of the same dimensions as $T_r$. For brevity, we write $\sigma_{\ell,1} := \sigma_{e_\ell,e_1}$ and $\sigma_{1,\ell} := \sigma^{-1}_{\ell,1}$ for $\ell \in [m]$, where $\sigma_{1,1}$ denotes the identity permutation, and we write $s_{1,j}(e_\ell) := s_{\sigma_{1,\ell}(j)}(e_\ell)$. For an edge $e_\ell \in E_r$, $\ell \neq 1$, recall that $\sigma_{\ell,1}(0)$ denotes the position of the vertex $t(e_\ell)$ in the tuple of vertices $v(e_1) = (t(e_1),s_1(e_1),\ldots,s_\mu(e_1))$.

Let us illustrate the algorithm with the inverse tensor eigenvalue problem: given $\x \in \mathbb{C}^d$ and $\lambda \in \CC$, find $T \in (\mathbb{C}^d)^{\otimes m}$ such that $T(\, \cdot \,,\x,\ldots,\x) = \lambda \x$. If $T^{(1)}$ is an initialisation, then line \ref{alg:T-update} of Algorithm \ref{alg:inverse-singular-value} gives us
\begin{align*}
    T^{(2)} &= T^{(1)} + \frac{\|\lambda \x - T^{(1)}(\, \cdot \,,\x,\ldots,\x)\|^2}{\|(\lambda \x - T^{(1)}(\, \cdot \,,\x,\ldots,\x)) \otimes \x \otimes \cdots \otimes \x\|^2}\left(\lambda \x - T^{(1)}(\, \cdot \,,\x,\ldots,\x)\right) \otimes \overline{\x} \otimes \cdots \otimes \overline{\x} \\
    &= T^{(1)} + \frac{\|\lambda \x - T^{(1)}(\, \cdot \,,\x,\ldots,\x)\|^2}{\|\lambda \x - T^{(1)}(\, \cdot \,,\x,\ldots,\x)\|^2 \|\x\|^{2(m-1)}}\left(\lambda \x - T^{(1)}(\, \cdot \,,\x,\ldots,\x)\right) \otimes \overline{\x} \otimes \cdots \otimes \overline{\x} \\
    &= T^{(1)} + \frac{1}{\|\x\|^{2(m-1)}}\left(\lambda \x - T^{(1)}(\, \cdot \,,\x,\ldots,\x)\right) \otimes \overline{\x} \otimes \cdots \otimes \overline{\x}
\end{align*}
In fact, the algorithm gives all solutions to the problem.

\begin{theorem}
All solutions to the inverse tensor eigenvalue problem are of the form
\begin{align}{\label{eq:inverse-tensor-eigenvector-solution}}
    T_0 + \frac{1}{\|\x\|^{2(m-1)}}\left(\lambda \x - T_0(\, \cdot \,,\x,\ldots,\x)\right) \otimes \overline{\x} \otimes \cdots \otimes \overline{\x},
\end{align}
where $T_0 \in (\mathbb{C}^d)^{\otimes m}$ is arbitrary.
\end{theorem}

\begin{proof}
Contracting \eqref{eq:inverse-tensor-eigenvector-solution} with $\x$ along all components except the first, we have
\begin{align*}
    T_0(\, \cdot \,,\x,\ldots,\x) + \frac{1}{\|\x\|^{2(m-1)}}\left(\lambda \x - T_0(\, \cdot \,,\x,\ldots,\x)\right)\cdot\langle \x,\x\rangle \cdots \langle \x,\x\rangle \\
    = T_0(\, \cdot \,,\x,\ldots,\x) + \frac{1}{\|\x\|^{2(m-1)}}\left(\lambda \x - T_0(\, \cdot \,,\x,\ldots,\x)\right)\cdot\|\x\|^{2(m-1)} = \lambda \x
\end{align*}
thus \eqref{eq:inverse-tensor-eigenvector-solution} is a solution. Conversely, if $T$ is a solution, then $\lambda \x - T(\, \cdot \,,\x,\ldots,\x) = 0$ so \eqref{eq:inverse-tensor-eigenvector-solution} reduces to $T_0 = T$.
\end{proof}

We now provide a convergence analysis showing that this algorithm finds tensors $T_r \in \CC^{e_r}$ solving the singular value problem for hyperquivers within a finite number of iterations, and a proof showing that it detects if no solution to the problem exists. The algorithm and its analysis still all hold if a different homogenisation is used, e.g. replacing every instance of $\lambda_{e_\ell}\x_{t(e_\ell)}$ with $\lambda_{e_\ell}\x_{t(e_\ell)}^{\odot \ell}$ where $\odot$ is the entry-wise Hadamard product. For $k \geq 1$, define
\begin{equation*}
    \alpha(k) := \frac{\|\mathbf{r}^{(k)}\|^2}{\|P^{(k)}\|^2}, \quad\quad \beta(k+1) := \frac{\|\mathbf{r}^{(k+1)}\|^2}{\|\mathbf{r}^{(k)}\|^2}, \quad\quad \beta(1) := 0
\end{equation*}

\begin{lemma}{\label{lem:orthogonality-r-P}}
For a fixed $r \in [M]$, let $\{\mathbf{r}^{(k)}\}_{k \geq 1}$ and $\{P^{(k)}\}_{k \geq 1}$ be the sequences generated by Algorithm \ref{alg:inverse-singular-value}. Then for all $i,j \geq 1$ with $i \neq j$, 
\begin{equation*}
\left\langle \mathbf{r}^{(i)}, \mathbf{r}^{(j)} \right\rangle = 0 \quad \text{and} \quad \left\langle P^{(i)}, P^{(j)} \right\rangle = 0.
\end{equation*}
\end{lemma}
\begin{proof}
Let $E = \{e_1,\ldots,e_m\}$. We first see that for $k \geq 1$ and all $\ell \in [m]$,
\begin{flalign}
    \mathbf{r}_\ell^{(k+1)} &= \lambda_{e_\ell} \mathbf{x}_{t(e_\ell)} - T_r^{(k+1)}(\mathbf{x}_{s(e_\ell)}) &\text{(line\;\ref{alg:r-ell-def})} \nonumber \\
    &= \lambda_{e_\ell} \mathbf{x}_{t(e_\ell)} - T_r^{(k)}(\mathbf{x}_{s(e_\ell)}) - \alpha(k) \cdot P^{(k)}(\mathbf{x}_{s(e_\ell)}) &\text{(line\;\ref{alg:T-update})} \nonumber \\
    &= \mathbf{r}_\ell^{(k)} - \alpha(k) \cdot P^{(k)}(\mathbf{x}_{s(e_\ell)}) &\label{eq:r_ell_interation} 
\end{flalign}
and thus
\begin{flalign}
    \mathbf{r}^{(k+1)} &=  \begin{pmatrix}
    {\mathbf{r}^{(k+1)}_{1}}^\top &|& \cdots &|& {\mathbf{r}^{(k+1)}_{m}}^\top
    \end{pmatrix}^\top &&\text{(line\;\ref{alg:r-def})} \nonumber \\
    &= \begin{pmatrix}
    {\mathbf{r}^{(k)}_{1}}^\top &|& \cdots &|& {\mathbf{r}^{(k)}_{m}}^\top
    \end{pmatrix}^\top - \alpha(k) \cdot \begin{pmatrix}
    {P^{(k)}(\mathbf{x}_{s(e_1)})}^\top &|& \cdots &|& {P^{(k)}(\mathbf{x}_{s(e_m)})}^\top
    \end{pmatrix}^\top &&\text{(eq.\;\ref{eq:r_ell_interation})} \nonumber \\
    &= \mathbf{r}^{(k)} - \alpha(k) \cdot \begin{pmatrix}
    {P^{(k)}(\mathbf{x}_{s(e_1)})}^\top &|& \cdots &|& {P^{(k)}(\mathbf{x}_{s(e_m)})}^\top
    \end{pmatrix}^\top &&\label{eq:r_k+1-iteration}
\end{flalign}
Additionally, for any $i,j \geq 1$, we see that
\begin{flalign}
    \left\langle \mathbf{r}^{(i+1)}, \mathbf{r}^{(j)} \right\rangle &= \left\langle \mathbf{r}^{(i)}, \mathbf{r}^{(j)} \right\rangle - \alpha(i) \cdot \sum_{\ell=1}^m \left\langle P^{(i)}(\mathbf{x}_{s(e_\ell)}), \mathbf{r}^{(j)}_\ell \right\rangle &&\text{(eq.\;\ref{eq:r_k+1-iteration})} \nonumber \\ 
    &= \left\langle \mathbf{r}^{(i)}, \mathbf{r}^{(j)} \right\rangle &&\nonumber \\ &\quad\quad - \alpha(i) \cdot \sum_{\ell=1}^m \left\langle P^{(i)}, \left(\bigotimes_{k=0}^{\sigma_{\ell,1}(0)-1}\overline{\x_{s_{1,k}(e_\ell)}}\right)\otimes \mathbf{r}_\ell^{(j)} \otimes \left(\bigotimes_{k=\sigma_{\ell,1}(0)+1}^{\mu}\overline{\x_{s_{1,k}(e_\ell)}}\right) \right\rangle &&\nonumber \\
    &= \left\langle \mathbf{r}^{(i)}, \mathbf{r}^{(j)} \right\rangle &&\nonumber \\ &\quad\quad - \alpha(i) \cdot \left\langle P^{(i)}, \sum_{\ell=1}^m \left(\bigotimes_{k=0}^{\sigma_{\ell,1}(0)-1}\overline{\x_{s_{1,k}(e_\ell)}}\right)\otimes \mathbf{r}_\ell^{(j)} \otimes \left(\bigotimes_{k=\sigma_{\ell,1}(0)+1}^{\mu}\overline{\x_{s_{1,k}(e_\ell)}}\right) \right\rangle &&\nonumber \\
    &= \left\langle \mathbf{r}^{(i)}, \mathbf{r}^{(j)} \right\rangle - \alpha(i) \cdot \left\langle P^{(i)}, P^{(j)} - \beta(j)\cdot P^{(j-1)} \right\rangle &&\text{(line\;\ref{alg:P-update})} \nonumber \\
    &= \left\langle \mathbf{r}^{(i)}, \mathbf{r}^{(j)} \right\rangle - \alpha(i) \cdot \left\langle P^{(i)}, P^{(j)} \right\rangle + \alpha(i)\cdot\beta(j)\cdot \left\langle P^{(i)}, P^{(j-1)} \right\rangle &&\label{eq:r_i+1_r_j}
\end{flalign}
where $P^{(0)} := 0$, and
\begin{align}
    \left\langle P^{(i+1)}, P^{(j)} \right\rangle 
    &= \sum_{\ell=1}^m \left\langle  \left(\bigotimes_{k=0}^{\sigma_{\ell,1}(0)-1}\overline{\x_{s_{1,k}(e_\ell)}}\right)\otimes \mathbf{r}_\ell^{(i+1)} \otimes \left(\bigotimes_{k=\sigma_{\ell,1}(0)+1}^{\mu}\overline{\x_{s_{1,k}(e_\ell)}}\right), P^{(j)} \right\rangle && \nonumber \\ &\quad\quad + \beta(i+1) \cdot \left\langle P^{(i)}, P^{(j)} \right\rangle && \text{(line\;\ref{alg:P-update})} \nonumber \\
    &= \sum_{\ell=1}^m \left\langle  \mathbf{r}_\ell^{(i+1)}, P^{(j)}(\x_{s(e_\ell)}) \right\rangle + \beta(i+1) \cdot \left\langle P^{(i)}, P^{(j)} \right\rangle && \nonumber \\
    &= \sum_{\ell=1}^m \left\langle  \mathbf{r}_\ell^{(i+1)}, 
    \frac{1}{\alpha(j)}(\mathbf{r}_\ell^{(j)} - \mathbf{r}_\ell^{(j+1)}) \right\rangle + \beta(i+1) \cdot \left\langle P^{(i)}, P^{(j)} \right\rangle && \text{(eq.\;\ref{eq:r_ell_interation})} \nonumber \\
    &= \frac{1}{\alpha(j)}\sum_{\ell=1}^m \left(\left\langle \mathbf{r}_\ell^{(i+1)},\mathbf{r}_\ell^{(j)} \right\rangle - \left\langle \mathbf{r}_\ell^{(i+1)},\mathbf{r}_\ell^{(j+1)} \right\rangle \right) + \beta(i+1) \cdot \left\langle P^{(i)}, P^{(j)} \right\rangle && \label{eq:P_i+1_P_j}
\end{align}

Since $\left\langle \mathbf{r}^{(i)}, \mathbf{r}^{(j)} \right\rangle = \overline{\left\langle \mathbf{r}^{(j)}, \mathbf{r}^{(i)} \right\rangle}$ and likewise for $P^{(i)}$ and $P^{(j)}$, and since the statement of the lemma is symmetric in $i$ and $j$, it suffices to prove the lemma for $i \geq j$. We proceed by induction, proving it for all $i = k$ and all $j = 1,\ldots,k-1$. The base case $j = 1$ and $i = 2$ is similar to the inductive step, so we only show the latter. Assuming the inductive hypothesis, then for $i = k+1$ and any $j = 1,\ldots,k$, we have
\begin{flalign}
    \left\langle \mathbf{r}^{(k+1)}, \mathbf{r}^{(j)} \right\rangle &= \left\langle \mathbf{r}^{(k)}, \mathbf{r}^{(j)} \right\rangle - \alpha(k) \cdot \left\langle P^{(k)}, P^{(j)} \right\rangle + \alpha(k)\cdot\beta(j)\cdot \left\langle P^{(k)}, P^{(j-1)} \right\rangle && \text{(eq.\;\ref{eq:r_i+1_r_j})} \nonumber \\
    &= \delta_{j,k} \cdot \|\mathbf{r}^{(k)}\|^2 - \delta_{j,k} \cdot \frac{\|\mathbf{r}^{(k)}\|^2}{\|P^{(k)}\|^2} \cdot \|P^{(k)}\|^2 + 0 = 0 && \label{eq:r12=0}
\end{flalign}
and
\begin{align}
    \left\langle P^{(k+1)}, P^{(j)} \right\rangle &= \frac{1}{\alpha(k)}\sum_{\ell=1}^m \left(\left\langle \mathbf{r}_\ell^{(k+1)},\mathbf{r}_\ell^{(j)} \right\rangle - \left\langle \mathbf{r}_\ell^{(k+1)},\mathbf{r}_\ell^{(j+1)} \right\rangle \right) + \beta(k+1) \cdot \left\langle P^{(k)}, P^{(j)} \right\rangle && \text{(eq.\;\ref{eq:P_i+1_P_j})} \nonumber \\ 
    &= \frac{1}{\alpha(k)}\sum_{\ell=1}^m \left(0 - \delta_{j,k} \cdot \|\mathbf{r}_\ell^{(k+1)}\|^2 \right) + \delta_{j,k} \cdot \beta(k+1) \cdot \|P^{(k)}\|^2 && \text{(eq.\;\ref{eq:r12=0})} \nonumber \\
    &= -\delta_{j,k}\cdot\frac{1}{\alpha(k)}\sum_{\ell=1}^m \|\mathbf{r}_\ell^{(k+1)}\|^2 + \delta_{j,k} \cdot \beta(k+1) \cdot \|P^{(k)}\|^2 && \nonumber \\
    &= -\delta_{j,k} \cdot \frac{\|P^{(k)}\|^2}{\|\mathbf{r}^{(k)}\|^2}\|\mathbf{r}^{(k+1)}\|^2 + \delta_{j,k} \cdot \frac{\|\mathbf{r}^{(k+1)}\|^2}{\|\mathbf{r}^{(k)}\|^2} \cdot \|P^{(k)}\|^2 = 0 \nonumber
\end{align}
where $\delta_{j,k} = 1$ if $j = k$ and 0 otherwise.
\end{proof}

\begin{lemma}{\label{lem:orthogonal-T}}
For a fixed $r \in [M]$, let $\{\mathbf{r}^{(k)}\}_{k \geq 1}$, $\{P^{(k)}\}_{k \geq 1}$, and $\{T_r^{(k)}\}_{k \geq 1}$ be the sequences generated by Algorithm \ref{alg:inverse-singular-value}, for any initialisation $T_r^{(1)}$. Suppose that $\widehat{T_r}$ is a tensor solving the inverse singular value problem \eqref{eq:inverse-tensor-eigenvalue-problem} for all $e \in E_r$. Then for all $k \geq 1$, 
\begin{equation}{\label{eq:orthogonal-T}}
\left\langle \widehat{T_r} - T_r^{(k)}, P^{(k)} \right\rangle = \|\mathbf{r}^{(k)}\|^2
\end{equation}
\end{lemma}
\begin{proof}
Let $E = \{e_1,\ldots,e_m\}$. We proceed by induction. The base case $k = 1$ is similar to the inductive step, so we only show the latter. Assuming the inductive hypothesis, then for step $k + 1$, first we see that
\begin{flalign}
    \left\langle \widehat{T_r} - T_r^{(k+1)}, P^{(k)} \right\rangle &= \left\langle \widehat{T_r} - T_r^{(k)}, P^{(k)} \right\rangle - \alpha(k) \cdot \left\langle P^{(k)}, P^{(k)} \right\rangle &\text{(line\;\ref{alg:T-update})} \nonumber\\ 
    &= \|\mathbf{r}^{(k)}\|^2 - \frac{\|\mathbf{r}^{(k)}\|^2}{\|P^{(k)}\|^2} \cdot \|P^{(k)}\|^2 = 0 &\label{eq:T-k+1-P-k=0} 
\end{flalign}
Thus,
\begin{alignat}{2}
    &\left\langle \widehat{T_r} - T_r^{(k+1)}, P^{(k+1)} \right\rangle & \nonumber \\ &= \left\langle \widehat{T_r} - T_r^{(k+1)}, \sum_{\ell=1}^m \left(\bigotimes_{j=0}^{\sigma_{\ell,1}(0)-1}\overline{\x_{s_{1,j}(e_\ell)}}\right)\otimes \mathbf{r}_\ell^{(k+1)} \otimes \left(\bigotimes_{j=\sigma_{\ell,1}(0)+1}^{\mu}\overline{\x_{s_{1,j}(e_\ell)}}\right) \right\rangle & \nonumber \\ 
    &\quad\quad\quad\quad + \beta(k+1) \cdot \left\langle \widehat{T_r} - T_r^{(k+1)}, P^{(k)} \right\rangle & \text{(line\;\ref{alg:P-update})} \nonumber \\ &=\sum_{\ell=1}^m\left\langle \widehat{T_r} - T_r^{(k+1)}, \left(\bigotimes_{j=0}^{\sigma_{\ell,1}(0)-1}\overline{\x_{s_{1,j}(e_\ell)}}\right)\otimes \mathbf{r}_\ell^{(k+1)} \otimes \left(\bigotimes_{j=\sigma_{\ell,1}(0)+1}^{\mu}\overline{\x_{s_{1,j}(e_\ell)}}\right) \right\rangle + 0 & \text{(eq.\;\ref{eq:T-k+1-P-k=0})} \nonumber \\ 
    &=\sum_{\ell=1}^m\left\langle \widehat{T_r}(\x_{s(e_\ell)}) - T_r^{(k+1)}(\x_{s(e_\ell)}), \mathbf{r}_\ell^{(k+1)} \right\rangle &  \nonumber \\
    &=\sum_{\ell=1}^m\left\langle \lambda_{e_\ell}\cdot\x_{t(e_\ell)} - T_r^{(k+1)}(\x_{s(e_\ell)}), \mathbf{r}_\ell^{(k+1)} \right\rangle & \text{($\widehat{T}_r$ is a solution)} \nonumber \\
    &=\sum_{\ell=1}^m\left\langle \mathbf{r}_\ell^{(k+1)}, \mathbf{r}_\ell^{(k+1)} \right\rangle = \sum_{\ell=1}^m\|\mathbf{r}_\ell^{(k+1)}\|^2 = \|\mathbf{r}^{(k+1)}\|^2 & \nonumber \qedhere
\end{alignat}
\end{proof}

\begin{theorem}{\label{thm:tensor-algorithm}}
If the inverse singular value problem \eqref{eq:inverse-tensor-eigenvalue-problem} is solvable, then Algorithm \ref{alg:inverse-singular-value} will find a solution $\{\widehat{T}_r\}_{r = 1}^M$ in a finite number of iterations for any initialisation $\{T_r^{(1)}\}_{r = 1}^M$. Otherwise, the problem has no solution if and only if there exists a $K \geq 1$ such that $\mathbf{r}^{(K)} \neq 0$ and $P^{(K)} = 0$ in some iteration $r \in [M]$ for $T_r$.
\end{theorem}
\begin{proof}
Let $r \in [M]$. If $\mathbf{r}^{(k)} = 0$ for any $k \geq 1$, then the inverse singular value problem has a solution $\widehat{T_r} = T_r^{(k)}$ by definition (line \ref{alg:r-ell-def}). Additionally, if $P^{(k)} \neq 0$ for all $k \geq 1$, then the iterations of Algorithm \ref{alg:inverse-singular-value} (in particular, lines \ref{alg:T-update} and \ref{alg:P-update}) are well-defined for all $k \geq 1$. By Lemma \ref{lem:orthogonality-r-P}, $\{\mathbf{r}^{(k)}\}_{k \geq 1}$ forms an orthogonal sequence in $\mathbb{C}^{D_r}$, and hence there must exist a finite iteration number $K \geq 1$ such that $\mathbf{r}^{(K)} = 0$, which means that $T_r^{(K)}$ is a solution. This proves the first part of the theorem and one direction of the second part. Finally, if there exists a $K \geq 1$ such that $\mathbf{r}^{(K)} \neq 0$ and $P^{(K)} = 0$, then this would contradict \eqref{eq:orthogonal-T} of Lemma \ref{lem:orthogonal-T}, unless there was no solution $\widehat{T_r}$. This proves the second part of the theorem.
\end{proof}

\bigskip

{\bf Acknowledgements.}
We thank Giorgio Ottaviani and Shmuel Friedland for helpful discussions. TM is supported by a Mathematical Institute Scholarship and an NSERC PGS D Scholarship. VN is supported by the EPSRC grant EP/R018472/1 and US AFOSR grant FA9550-22-1-0462. AS is supported by the NSF (DMR-2011754).

\bibliographystyle{plain}
\bibliography{main.bib}

\appendix

\bigskip

\section{The Chow ring and Chern classes}
\label{sec:intersection_theory}

We recall the definitions of the Chow groups and Chow ring of a projective variety, following~\cite{eisenbud_harris_2016,fulton_1998}.

\begin{definition}
Let $X$ be a smooth projective variety of dimension $n$.
\begin{enumerate}[label=(\roman*)]
    \item {\cite[Section 1.2.1]{eisenbud_harris_2016}} The group of $i$\textit{-cycles} of $X$ is the free abelian group $Z_i(X)$ generated by the irreducible $i$-dimensional subvarieties of $X$. An element of $Z_i(X)$, called an $i$\textit{-cycle}, is a finite, formal sum $\sum_i n_i V_i$ of $i$-dimensional subvarieties $V_i$ of $X$, where $n_i \in \mathbb{Z}$.
    \item {\cite[Proposition 1.6]{fulton_1998}} An $i$-cycle $Z \in Z_i(X)$ is \textit{rationally equivalent to zero} if there exist irreducible subvarieties $V_i \subseteq \mathbb{P}^1 \times X$ of dimension $i+1$ with dominant projection maps $V_i \rightarrow \mathbb{P}^1$ such that $Z = \sum_i V_i(0) - V_i(\infty)$, where $V_i(t) = V_i \cap (\{t\} \times X)$. The $i$-cycles rationally equivalent to zero form a subgroup $\mathrm{Rat}_i(X)$ of $Z_i(X)$.
    \item {\cite[Section 1.2.2-1.2.3]{fulton_1998}} The $i$\textit{-th Chow group} of $X$ is the quotient group $A_i(X) = Z_i(X)/\mathrm{Rat}_i(X)$. The class of an $i$-cycle $C \in Z_i(X)$ in $A_i(X)$ is denoted by $[C]$. The \textit{Chow group} of $X$ is the direct sum $A_{*}(X) = \oplus_{i = 0}^n A_i(X)$. The \textit{Chow ring} of $X$ is the direct sum $A^{*}(X) = \oplus_{i = 0}^n A^i(X)$, where $A^i(X) = A_{n-i}(X)$.
\end{enumerate}
\end{definition}

The Chow ring $A^{*}(X)$ has the structure of a commutative ring, with a product $A^i(X) \times A^j(X) \rightarrow A^{i+j}(X)$ called the \textit{intersection product}. 
We say that $C$ and $D$ \textit{intersect transversely} if on each component of $C \cap D$ at a generic point $p$, the sum of the tangent spaces of $C$ and $D$ is the tangent space of $X$: $T_p C + T_p D = T_p X$. 
The intersection product takes any codimension-$i$ and codimension-$j$ irreducible subvarieties $C,D \subseteq X$, replaces $C$ and $D$ by rationally equivalent subvarieties $C^\prime, D^\prime \subseteq X$ (if necessary) in order for $C^\prime$ and $D^\prime$ to intersect transversely, and defines $[C][D] = [C^\prime \cap D^\prime] \in A^{i+j}(X)$. 
The existence of a well-defined intersection product is due to Fulton~\cite{fulton_1998}; see~\cite[Appendix A]{eisenbud_harris_2016}.

\begin{definition}[{\label{pushforwardPullback}}{\cite[Section 1.3.6]{eisenbud_harris_2016}}]
Let $X$ and $Y$ be smooth projective varieties of dimensions $m$ and $n$, and $f: X \rightarrow Y$ a morphism. 
\begin{enumerate}[label=(\roman*)]
    \item Let $V \subseteq X$ be an irreducible subvariety of dimension $i$. Define a group homomorphism $f_*: A_i(X) \rightarrow A_i(Y)$ by 
    \[ [V] \mapsto \begin{cases} d \cdot [f(V)] & \dim f(V) = i \\ 
    0 & \dim f(V) < i ,
    \end{cases}  \]
    where $d:=[R(V):R(f(V))]$ is the degree of the field extension between the function fields $R(V)$ of $V$ and $R(f(V))$ of $f(V)$. The map $f_*$ extends to a group homomorphism $f_*: A_*(X) \rightarrow A_*(Y)$, called the \textit{pushforward} of $f$.
    \item There is a unique group homomorphism $f^*: A^i(Y) \rightarrow A^i(X)$ such that for all $W \subseteq Y$ a smooth subvariety with $i = \codim _Y W = \codim _X(f^{-1}(W))$, we have $f^*([W]) = [f^{-1}(W)]$. This extends to a ring homomorphism $f^*: A^*(Y) \rightarrow A^*(X)$ called the \textit{pullback} of $f$.
\end{enumerate}
\end{definition}

\begin{remark}{\label{rem:closed-immersion-pushforward}}
The degree of the field extension in the definition of $f_*$ is the degree of the covering of $f(V)$ by $V$. In particular, if $i: X \rightarrow Y$ is a closed immersion, then $i_*([X]) = [X]$.
\end{remark}

\begin{proposition}[Projection Formula, {\cite[Theorem 1.23(b)]{eisenbud_harris_2016}}]{\label{projectionFormula}}
If $X$ and $Y$ are smooth projective varieties,
\noindent $f:X\rightarrow Y$ is a morphism, and $[C] \in A_i(X)$ and $[D] \in A^j(Y)$ are cycle classes, then
$$[D] f_*([C]) = f_*(f^*([D]) [C]) \in A_{i-j}(Y).$$
\end{proposition}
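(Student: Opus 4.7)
The plan is to reduce by linearity and the moving lemma to the case of irreducible subvarieties meeting transversely, then split into two cases according to whether $f|_C$ is generically finite onto its image. Throughout I work with a proper morphism $f:X\to Y$ of smooth projective varieties (properness holds automatically here since $X$ is projective).

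First, I observe that both sides of the formula are bilinear in the cycle classes. Since the pushforward $f_*$, the pullback $f^*$ and the intersection product each descend from cycles to rational equivalence classes, it suffices to verify the identity when $C\subseteq X$ and $D\subseteq Y$ are irreducible subvarieties of the stated dimension and codimension. Next I apply a moving lemma (as in \cite[Theorem 1.6 and Appendix A]{eisenbud_harris_2016}) to replace $D$ by a rationally equivalent cycle $D'$ such that simultaneously $D'$ meets $f(C)$ generically transversely in $Y$ and $f^{-1}(D')$ meets $C$ generically transversely in $X$. Under this substitution, both sides become honest intersection classes: the right side equals $f_*\bigl([f^{-1}(D')\cap C]\bigr)$ and the left side equals $[D'\cdot f_*(C)]$.

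Now I case-split on the dimension of $f(C)$. Suppose first that $\dim f(C)<i=\dim C$. By definition of $f_*$, we have $f_*([C])=0$, so the left side vanishes. For the right side, transversality gives $\dim(f^{-1}(D')\cap C)=i-j$, while $\dim(D'\cap f(C))=\dim f(C)-j<i-j$. The morphism $f^{-1}(D')\cap C\to D'\cap f(C)$ therefore has positive-dimensional generic fibre, so by the definition of $f_*$ the class $f_*\bigl([f^{-1}(D')\cap C]\bigr)$ vanishes as well. Suppose instead that $\dim f(C)=i$, and set $d=[R(C):R(f(C))]$, so that $f_*([C])=d\cdot[f(C)]$ and the left side equals $d\cdot[D'\cap f(C)]$. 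I claim the restricted map
\[
f\colon f^{-1}(D')\cap C \;\longrightarrow\; D'\cap f(C)
\]
is generically finite of degree exactly $d$. Indeed, for a generic point $y\in D'\cap f(C)$, transversality of $D'$ with $f(C)$ combined with generic smoothness of $f|_C\colon C\to f(C)$ ensures that the schematic fibre $(f|_C)^{-1}(y)$ consists of $d$ reduced points, each of which automatically lies in $f^{-1}(D')\cap C$. Hence $f_*\bigl([f^{-1}(D')\cap C]\bigr)=d\cdot[D'\cap f(C)]$, which agrees with the left side.

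The main technical obstacle is the moving lemma itself, which requires producing a single representative $D'$ of $[D]$ that is simultaneously transverse to $f(C)$ downstairs and whose preimage is transverse to $C$ upstairs. The standard route is Chow's moving lemma on the projective ambient space, followed by a Bertini-type argument to guarantee the transversality of both intersections on the smooth loci; a cleaner alternative, when available, is to reduce to the divisor case $j=1$ (where the projection formula for line bundles is essentially tautological from $\mathcal{O}_X(f^*D)\cong f^*\mathcal{O}_Y(D)$) and then iterate. A secondary care point is the degree count in the generically-finite case: one must ensure that the degree of the field extension is preserved after restriction, which is exactly the content of transversality combined with the smoothness hypotheses on $X$ and $Y$.
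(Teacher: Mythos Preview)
The paper does not prove this statement at all: it is quoted verbatim as \cite[Theorem 1.23(b)]{eisenbud_harris_2016} in the appendix, with no accompanying argument. So there is no ``paper's own proof'' to compare against.

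Your sketch follows the standard textbook route (bilinearity, moving lemma to arrange simultaneous transversality, then a case split on whether $f|_C$ is generically finite onto its image) and is essentially the argument one finds in Eisenbud--Harris or Fulton. The reasoning in both cases is correct, and you rightly flag the delicate point: producing a representative $D'$ that is simultaneously transverse to $f(C)$ in $Y$ and whose preimage is transverse to $C$ in $X$. This is indeed the crux, and it is handled in the cited reference via the refined transversality statement in Kleiman's theorem (or, as you note, by reducing to the divisor case and iterating). Your proposal is a faithful outline of the standard proof.
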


\begin{definition}[{\cite[Theorem 5.3]{eisenbud_harris_2016}}{\label{def:chern-classes}}]
Let $X$ be a smooth projective variety of dimension $n$ and let $\mathscr{B}$ be a vector bundle over $X$. There exist unique classes $c_i(\mathscr{B}) \in A^i(X)$ for $i \in [n]$ called the \textit{Chern classes} of $\mathscr{B}$, depending only on the isomorphism class of $\mathscr{B}$, satisfying the following axioms:
\begin{enumerate}[label=(\roman*)]
    \item If $r$ is the rank of $\mathscr{B}$, then $c_i(\mathscr{B}) = 0$ for all $i > r$.
    \item If $\sigma_0,...,\sigma_{r-i} \in \Gamma(\mathscr{B})$ are global sections and their degeneracy locus $Z(\sigma_0,...,\sigma_{r-i}) \subseteq X$ has codimension $i$ in $X$, then $c_i(\mathscr{B}) = [Z(\sigma_0,...,\sigma_{r-i})]$.
    \item The \textit{Chern polynomial} of $\mathscr{B}$ is $C(t,\mathscr{B}) = 1 + \sum_{i = 1}^r c_i(\mathscr{B})t^i$. If $0 \rightarrow \mathscr{B} \rightarrow \mathscr{B}^\prime \rightarrow \mathscr{B}^{\prime\prime} \rightarrow 0$ is an exact sequence of vector bundles over $X$, then \[C(t,\mathscr{B}^\prime) = C(t,\mathscr{B})C(t,\mathscr{B}^{\prime\prime}).\]
    \item If $Y$ is a smooth projective variety and $f: Y \rightarrow X$ a morphism, 
    \noindent ${c_i(f^*\mathscr{B})=f^*(c_i(\mathscr{B}))}$.
    \item If $\mathscr{L}$ is a line bundle on $X$, then $C(t,\mathscr{L}) = 1 + c_1(\mathscr{L})t$, where $c_1(\mathscr{L}) \in A^1(X)$ is the class of the divisor of zeros minus the divisor of poles of any section of $\mathscr{L}$ defined on a Zariski-open set of $X$
\end{enumerate}
If $r=n$, then $c_n(\mathscr{B}) \in A^n(X)$ so $c_n(\mathscr{B}) = \nu(\mathscr{B})[p]$ for some integer $\nu(\mathscr{B})$ called the \textit{top Chern number} of $\mathscr{B}$, where $[p] \in A^n(X)$ is the class of a point $p \in X$.
\end{definition}

\begin{remark}{\label{rem:chern-class-axioms}} Compared with \cite[Theorem 5.3]{eisenbud_harris_2016}, we have added the redundant axiom (i) to our Definition \ref{def:chern-classes}(a) in order to help clarify the properties of Chern classes. The result \cite[Theorem 5.3]{eisenbud_harris_2016} is fully proven in \cite[Chapter 3]{fulton_1998}.
\end{remark}

\begin{definition}[{\cite[Remark 3.2.3, Example 3.2.3]{fulton_1998}}]{\label{def:chern-roots}}
The \textit{Chern roots} of $\mathscr{B}$ are the formal variables $\xi_i(\mathscr{B})$ in the formal factorization of the Chern polynomial:
$$C(t,\mathscr{B}) = \prod_{i = 1}^r (1 + \xi_i(\mathscr{B})t).$$
The \textit{Chern character} of $\mathscr{B}$ is 
$\ch(\mathscr{B}) = \sum_{i = 1}^r \exp (\xi_j(\mathscr{B}))$,
where $\exp (\alpha) = \sum_{k = 0}^\infty \frac{1}{k!}\alpha^k$ is a formal sum in the formal variable $\alpha$.
\end{definition}

From Definitions~\ref{def:chern-classes}~and~\ref{def:chern-roots}, one can obtain the following properties.

\begin{proposition}[{\cite[Remark 3.2.3, Example 3.2.3]{fulton_1998}}]{\label{chernProperties}}
Let $X$ be a smooth projective variety and $\mathscr{B}$ and $\mathscr{B}^\prime$ be vector bundles over $X$.
\begin{enumerate}[(a)]
    \item If $\mathscr{B}$ is the trivial bundle, then $C(t,\mathscr{B}) = 1$.
    \item The Chern polynomial of $\mathscr{B}$ and its dual are related by $C(t,\mathscr{B}^{\vee}) = C(-t,\mathscr{B})$.
    \item The Chern character satisfies $\ch(\mathscr{B} \otimes \mathscr{B}^\prime) = \ch(\mathscr{B})\ch(\mathscr{B}^\prime)$.
\end{enumerate}
\end{proposition}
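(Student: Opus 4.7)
The plan is to verify the three identities in turn, with all parts reducing (via the splitting principle) to the already-established axioms (i)--(iv) of Definition \ref{def:chern-classes}. Part (a) I would dispatch directly: the trivial bundle $\mathscr{B} \cong X \times \mathbb{C}^r$ admits $r$ everywhere linearly independent global sections $\sigma_0, \ldots, \sigma_{r-1}$ given by the standard basis. For each $i \in \{1, \ldots, r\}$, the degeneracy locus $Z(\sigma_0, \ldots, \sigma_{r-i})$ is empty because the sections remain linearly independent at every point. Axiom (ii) then gives $c_i(\mathscr{B}) = [\varnothing] = 0$ for $1 \leq i \leq r$, and axiom (i) handles $i > r$. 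Hence $C(t, \mathscr{B}) = 1$.

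For parts (b) and (c), my plan is to first prove the statements for direct sums of line bundles and then invoke the splitting principle to transfer them to arbitrary vector bundles. The splitting principle provides, for each vector bundle $\mathscr{B}$ on $X$, a morphism $p : Y \to X$ from a smooth variety $Y$ such that $p^* : A^*(X) \to A^*(Y)$ is injective and $p^*\mathscr{B}$ splits as a direct sum of line bundles $L_1 \oplus \cdots \oplus L_r$; by axiom (iii) these $\xi_i := c_1(L_i)$ are precisely the Chern roots of $p^*\mathscr{B}$. I would also record the two essential line-bundle identities: $c_1(L^\vee) = -c_1(L)$ (which follows from $L \otimes L^\vee \cong \mathscr{O}_X$ together with part (a)) and $c_1(L \otimes M) = c_1(L) + c_1(M)$ (a classical fact from the divisor-theoretic description of $c_1$ on line bundles).

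For (b), the splitting $p^*\mathscr{B}^\vee = L_1^\vee \oplus \cdots \oplus L_r^\vee$ has Chern roots $-\xi_i$, so axiom (iii) gives
\[
C(t, p^*\mathscr{B}^\vee) = \prod_{i=1}^r (1 - \xi_i t) = C(-t, p^*\mathscr{B}).
\]
Axiom (iv) lets me rewrite both sides as pullbacks of $C(t, \mathscr{B}^\vee)$ and $C(-t, \mathscr{B})$ respectively, and injectivity of $p^*$ yields the identity on $X$. For (c), applying the splitting principle to both $\mathscr{B}$ and $\mathscr{B}'$ (in two successive flag bundle constructions) produces a map to $X$ on which $\mathscr{B}$ and $\mathscr{B}'$ split simultaneously with Chern roots $\xi_i$ and $\xi_j'$. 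Then $\mathscr{B} \otimes \mathscr{B}'$ splits as $\bigoplus_{i,j} L_i \otimes M_j$ with Chern roots $\xi_i + \xi_j'$, so
\[
\ch(\mathscr{B} \otimes \mathscr{B}') = \sum_{i,j} \exp(\xi_i + \xi_j') = \Bigl(\sum_i \exp(\xi_i)\Bigr)\Bigl(\sum_j \exp(\xi_j')\Bigr) = \ch(\mathscr{B})\,\ch(\mathscr{B}'),
\]
after pulling back and using injectivity of $p^*$.

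The main obstacle is the splitting principle itself, together with the two line-bundle identities used as input: the listed axioms of Definition \ref{def:chern-classes} do not \emph{directly} yield $c_1(L \otimes M) = c_1(L) + c_1(M)$ from a short exact sequence, so I would appeal to the divisor-theoretic construction of $c_1$ for line bundles (citing, e.g., \cite[Section 3.2]{fulton_1998}) to establish this base case. Once the line-bundle case is in hand, the splitting principle makes both (b) and (c) formal manipulations of polynomials in the Chern roots.
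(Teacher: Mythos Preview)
Your proof is correct and follows the standard route via the splitting principle. The paper, however, does not actually prove this proposition: it is stated in the appendix as a background fact with a citation to \cite[Remark 3.2.3, Example 3.2.3]{fulton_1998}, and no argument is given. So there is nothing to compare against beyond noting that your argument is precisely the kind of derivation Fulton sketches at the cited location.
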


\end{document}